\theoremstyle{plain}
\newtheorem{mythe}{Theorem}[section]
\newtheorem{lem}[mythe]{Lemma}
\newtheorem{mydef}[mythe]{Definition}
\newtheorem{cor}[mythe]{Corollary}
\newtheorem{rem}[mythe]{Remark}
\newtheorem{pro}[mythe]{Proposition}
\theoremstyle{definition}
\newcommand{\ee}{\varepsilon}
\newcommand{\ran}{\text{ran}}
\newcommand{\bN}{\mathbb{N}}
\newcommand{\cT}{\mathcal{T}}
\newcommand{\cG}{\mathcal{G}}
\newcommand{\cS}{\mathcal{S}}
\newcommand{\cB}{\mathcal{B}}
\newcommand{\cK}{\mathcal{K}}
\newcommand{\la}{\langle}
\newcommand{\ra}{\rangle}
\newcommand{\fV}{\mathfrak{V}}
\newcommand{\cD}{\mathcal{D}}
\newcommand{\cX}{\mathcal{X}}
\newcommand{\cY}{\mathcal{Y}}
\newcommand{\cH}{\mathcal{H}}
\newcommand{\cA}{\mathcal{A}}
\newcommand{\er}{\text{er}}
\newcommand{\el}{\text{el}}
\newcommand{\bofh}{\cB(\cH)}
\newcommand{\cM}{\mathcal{M}}
\newcommand{\cU}{\mathcal{U}}
\newcommand{\bC}{\mathbb{C}}
\newcommand{\bR}{\mathbb{R}}
\newcommand{\id}{\text{id}}
\newcommand{\bZ}{\mathbb{Z}}
\newcommand{\cI}{\mathcal{I}}
\newcommand{\cJ}{\mathcal{J}}
\newcommand{\cV}{\mathcal{V}}
\newcommand{\tr}{\text{tr}}
\newcommand{\spn}{\text{span }}
\newdimen\Squaresize \Squaresize=14pt
\newdimen\Thickness \Thickness=0.4pt
\def\Square#1{\hbox{\vrule width \Thickness
   \vbox to \Squaresize{\hrule height \Thickness\vss
      \hbox to \Squaresize{\hss#1\hss}
   \vss\hrule height\Thickness}
\unskip\vrule width \Thickness} \kern-\Thickness}
\def\Vsquare#1{\vbox{\Square{$#1$}}\kern-\Thickness}
\title[A Non-Commutative Unitary Analogue of Kirchberg's conjecture]{A Non-Commutative Unitary Analogue of Kirchberg's conjecture}
\author{Samuel J. Harris}
\thanks{The author was supported in part by NSERC (Canada).}
\address{University of Waterloo \\
Department of Pure Mathematics \\
200 University Ave. W. \\
Waterloo, Ontario \\
Canada  N2L 3G1\\}
\email{sj2harri@uwaterloo.ca}
\begin{document}

\begin{abstract}  The $C^{\ast}$-algebra $\cU_{nc}(n)$ is the universal $C^{\ast}$-algebra generated by $n^2$ generators $u_{ij}$ that make up a unitary matrix.  We prove that Kirchberg's formulation of Connes' embedding problem has a positive answer if and only if $\cU_{nc}(2) \otimes_{\min} \cU_{nc}(2)=\cU_{nc}(2) \otimes_{\max} \cU_{nc}(2)$.  Our results follow from properties of the finite-dimensional operator system $\cV_n$ spanned by $1$ and the generators of $\cU_{nc}(n)$.  We show that $\cV_n$ is an operator system quotient of $M_{2n}$ and has the OSLLP.  We obtain necessary and sufficient conditions on $\cV_n$ for there to be a positive answer to Kirchberg's problem.  Finally, in analogy with recent results of Ozawa, we show that a form of Tsirelson's problem related to $\cV_n$ is equivalent to Connes' embedding problem.
\end{abstract}

\keywords{Connes' Embedding Problem; Kirchberg's Conjecture; Operator Systems; Unitary Correlation Sets}

\maketitle

\tableofcontents

\section*{Introduction}

One of the most significant open problems in the field of operator algebras is Connes' embedding problem \cite{connes}, which asks whether every finite von Neumann algebra with separable predual can be embedded into the ultrapower of the hyperfinite $II_1$ factor in a trace-preserving way.  One of the simpler formulations of the problem is known as Kirchberg's problem \cite[Proposition 8]{kirchberg93}, which asks whether or not $C^*(F_n) \otimes_{\min} C^*(F_n)=C^*(F_n) \otimes_{\max} C^*(F_n)$ for some (equivalently all) $n \geq 2$, where $F_n$ is the free group on $n$ generators.  Due to recent work in \cite{fritz}, \cite{junge}, and \cite{ozawa}, another equivalent statement of the embedding problem is in terms of certain sets of quantum bipartite correlations (see \cite{fritz}, \cite{junge}, \cite{ozawa} and \cite{tsirelson} for more information on these correlations).

One of our main results is that Kirchberg's problem, stated in terms of $C^*(F_n)$, is equivalent to the same problem when $C^*(F_n)$ is replaced by Brown's $C^{\ast}$-algebra $\cU_{nc}(n)$, defined in \cite{brown} as the universal $C^{\ast}$-algebra whose generators make up a unitary $n \times n$ matrix.  In other words, Connes' embedding problem has a positive answer if and only if $\cU_{nc}(2) \otimes_{\min} \cU_{nc}(2)=\cU_{nc}(2) \otimes_{\max} \cU_{nc}(2)$ (see Theorem \ref{uncequivalenttokirchberg}).  On the way to proving this equivalence, we obtain a new proof of Kirchberg's theorem \cite[Corollary 1.2]{kirchberg} that $C^*(F_n) \otimes_{\min} \bofh=C^*(F_n) \otimes \bofh$ for every Hilbert space $\cH$.  Both of these results follow from properties of the finite-dimensional operator system $\cV_n$ spanned by the generators of $\cU_{nc}(n)$.  The significance of stating Kirchberg's conjecture in terms of $\cU_{nc}(n)$ lies in recent advances in quantum information theory.  Indeed, the phenomenon of embezzling entanglement in a bipartite scenario can be modelled using states on tensor products of $\cU_{nc}(n)$ (see \cite{CLP} for more information on embezzlement of entanglement).  Our study of the $C^{\ast}$-algebra $\cU_{nc}(n)$ and states on the tensor products $\cU_{nc}(n) \otimes_{\min} \cU_{nc}(n)$ and $\cU_{nc}(n) \otimes_{\max} \cU_{nc}(n)$ allow for a theory of so-called ``unitary correlation sets", which is motivated by their connection to quantum information theory.  A problem involving unitary correlation sets that is analogous to Tsirelson's problem is shown to also be equivalent to Kirchberg's problem.

The methods in this paper draw on many results in the recent theory of operator system tensor products and operator system quotients.  In Section $\S 1$ we review some basic results in the theory of operator system tensor products, and in Section $\S 2$ we recall some nuclearity-related properties of operator systems that arise in equivalent formulations of Kirchberg's problem.  Section $\S 3$ gives a short introduction to the theory of operator system quotients.  In Section $\S 4$, we explore properties of the operator system $\cV_n$ and the $C^{\ast}$-algebra $\cU_{nc}(n)$ while giving alternate characterizations of the WEP and DCEP in terms of tensor products with $\cV_n$.  We link both $\cV_n$ and $\cU_{nc}(n)$ to Kirchberg's problem in Section $\S 5$.  Finally, Section $\S 6$ draws on some results in quantum bipartite correlations in the field of quantum information theory, and an analogous theory of such correlations is developed in terms of the $C^{\ast}$-algebra $\cU_{nc}(n)$.

\section{Operator Systems and their Tensor Products}

In this section, we include a short introduction to operator systems and their tensor theory.  The interested reader can see \cite{KPTT} for a thorough introduction to the subject.  First, we give the abstract definition of an operator system, bearing in mind that concrete operator systems are always self-adjoint vector subspaces of $\bofh$, for some Hilbert space $\cH$, which contain the identity element.

Assume that $\cS$ is a complex vector space.  An \textbf{involution} on $\cS$ is a map $*:\cS \to \cS$ such that for all $x,y \in \cS$ and $\alpha \in \bC$,
\begin{itemize}
\item
$(\alpha x+y)^*=\overline{\alpha} x^*+y^*$, and
\item
$(x^*)^*=x$.
\end{itemize}
We denote by $\cS_h$ the set of all $x \in \cS$ with $x=x^*$.  We call $\cS$ a \textbf{$*$-vector space} if it is a complex vector space equipped with an involution.  Whenever $\cS$ is a $*$-vector space, there is a natural way to make $M_n(\cS)$ into a $*$-vector space, where $M_n(\cS)$ is the space of all $n \times n$ matrices with entries in $\cS$; indeed, one may let $(x_{ij})^*=(x_{ji}^*)$ for each $(x_{ij}) \in M_n(\cS)$.

Given a $*$-vector space $\cS$, a \textbf{matrix ordering} on $\cS$ is a set of cones $\{C_n\}_{n=1}^{\infty}$, where $C_n \subseteq (M_n(\cS))_h$, satisfying the following conditions:
\begin{itemize}
\item
$C_n+C_n \subseteq C_n$ and $tC_n \subseteq C_n$ for all $t \geq 0$ and $n \in \bN$,
\item
$A^*C_nA \subseteq C_m$ for all $n,m \in \bN$ and $A \in M_{n,m}(\bC)$, and
\item
$C_n \cap (-C_n)=\{0\}$ for all $n \in \bN$.
\end{itemize}
A $*$-vector space $\cS$ is said to be a \textbf{matrix-ordered $*$-vector space} if there is a matrix order $\{C_n\}_{n=1}^{\infty}$ on $\cS$.

We say that an element $e \in \cS_h$ is an \textbf{order unit} if for every $x \in \cS_h$, there is $r>0$ such that $x+re \in C_1$.  We call $e$ an \textbf{Archimedean order unit} if whenever $x \in \cS_h$ is such that $x+re \in C_1$ for all $r>0$, we have $x \in C_1$.  We call $e$ a \textbf{matrix order unit} if for each $n \in \bN$, $I_n=\begin{pmatrix} e \\ & \ddots \\ & & e \end{pmatrix} \in M_n(\cS)$ is an order unit for $M_n(\cS)$.  Note that $e$ is a matrix order unit if and only if it is an order unit for $\cS$ with respect to the cone $C_1$. We say that $e$ is an \textbf{Archimedean matrix order unit} provided that each $I_n$ is an Archimedean order unit.  With this terminology in hand, we can define the abstract version of operator systems.  An \textbf{(abstract) operator system} is a triple $(\cS,\{C_n\}_{n=1}^{\infty},e)$ where $\cS$ is a $*$-vector space, $\{C_n\}_{n=1}^{\infty}$ is a matrix ordering on $\cS$, and $e$ is an Archimedean matrix order unit for $\cS$.  We will usually refer to $\cS$ as an operator system, allowing the context to dictate which matrix ordering is being used.  If $(\cS,\{C_n\}_{n=1}^{\infty},e)$ and $(\cT,\{D_n\}_{n=1}^{\infty},e)$ are operator systems with $\cT \subseteq \cS$, we will say that $\cT$ is an \textbf{operator subsystem} of $\cS$ provided that $\cT$ has the same $*$-vector space structure as $\cS$ and $D_n=C_n \cap \cT$ for all $n \in \bN$.

Given operator systems $\cS$ and $\cT$ and a number $k \in \bN$, we say that a linear map $\varphi:\cS \to \cT$ is \textbf{$k$-positive} provided that for all $n \leq k$, $(\varphi(x_{ij})) \in M_n(\cT)_+$ whenever $(x_{ij}) \in M_n(\cS)_+$.  We say that $\varphi$ is \textbf{completely positive} if it is $k$-positive for every $k \in \bN$.  We use the abbreviaton ``ucp" for ``unital and completely positive".  A ucp map $\varphi:\cS \to \cT$ is a \textbf{complete order isomorphism} provided that $\varphi$ is a bijection and $\varphi^{-1}:\cT \to \cS$ is also ucp.  We will also need the notion of an \textbf{order isomorphism}, which is a linear map $\varphi:\cS \to \cT$ between operator systems that is a bijection, such that $\varphi$ and $\varphi^{-1}$ are $1$-positive.  Finally, given a linear map $\varphi:\cS \to \cT$ between operator systems, we say that $\varphi$ is a \textbf{complete order embedding} (or \textbf{complete order injection}) if there is an operator subsystem $\cT_1 \subseteq \cT$ such that $\varphi(\cS)=\cT_1$ and $\varphi:\cS \to \cT_1$ is a complete order isomorphism.

The following celebrated result of Choi and Effros ensures that there is no difference between considering abstract operator systems and concrete operator systems.

\begin{mythe}
\emph{(Choi-Effros, \cite{choieffros})}
Let $\cS$ be an abstract operator system equipped with Archimedean matrix order unit $e$.  Then there is a Hilbert space $\cH$ and a complete order embedding $\varphi:\cS \to \bofh$ such that $\varphi(e)=I_{\cH}$.  Conversely, any operator system contained in $\bofh$ is an abstract operator system with Archimedean matrix order unit $e=I_{\cH}$.
\end{mythe}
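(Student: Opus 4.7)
My plan is to handle the converse direction first (it is essentially automatic) and then treat the forward direction in three stages. For the converse, if $\cS \subseteq \bofh$ is a concrete operator system, one defines $C_n := M_n(\cS) \cap M_n(\bofh)_+$. The matrix-ordering axioms are immediate from the $C^*$-algebra structure on $M_n(\bofh)$, and the Archimedean property of $I_\cH$ is inherited from the norm-closure of positive elements in $M_n(\bofh)$.

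For the forward direction, the strategy is to produce enough ucp maps from $\cS$ to finite-dimensional matrix algebras and then assemble them by direct sum. The first stage is the key separation result: for every $n \in \bN$ and every $x \in M_n(\cS)_h \setminus C_n$, there exists a ucp map $\Phi : \cS \to M_n$ such that $\Phi^{(n)}(x) \notin M_n(M_n)_+$. To establish this, observe that the matrix-ordering axioms force $I_n \otimes e$ to be an Archimedean order unit for $M_n(\cS)_h$, which endows $M_n(\cS)_h$ with an order-unit norm in which the cone $C_n$ is closed (this is where the Archimedean hypothesis is crucial). The geometric Hahn--Banach theorem then separates $x$ from $C_n$ by a continuous $\bR$-linear functional; extending to $M_n(\cS)$ and normalizing so that it sends $I_n \otimes e$ to $1$ yields a positive unital linear functional $\tilde f : M_n(\cS) \to \bC$ with $\tilde f(x) < 0$. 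The standard Choi-type correspondence $\Phi(s)_{ij} := \tilde f(s \otimes E_{ji})$ then converts $\tilde f$ into the required map $\Phi : \cS \to M_n$.

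The second stage is assembly. Let $\{\Phi_\alpha : \cS \to M_{k_\alpha}\}_\alpha$ enumerate all the separating maps produced in the first stage as $n$ and $x$ vary. Set $\cH := \bigoplus_\alpha \bC^{k_\alpha}$ and define $\varphi(s) := \bigoplus_\alpha \Phi_\alpha(s) \in \bofh$. Then $\varphi$ is unital and, assuming the $\Phi_\alpha$ are genuinely ucp, completely positive as a direct sum of ucp maps. The separation property guarantees $\varphi^{(n)}(x) \geq 0 \Rightarrow x \in C_n$ for all $n$, which together with complete positivity forces $\varphi$ to be a complete order injection. Injectivity at the vector-space level follows by applying the case $n = 1$ to $\pm x$ for nonzero $x \in \cS$, and $\varphi(e) = I_\cH$ is automatic from unitality of each $\Phi_\alpha$.

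The main obstacle is the separation in the first stage, more specifically the step where one must pass from positivity of the scalar functional $\tilde f$ on $C_n$ to \emph{complete} positivity of the matrix-valued map $\Phi$. The naive correspondence yields only $n$-positivity of $\Phi$, and upgrading this requires amplification: one replaces the target $M_n$ by $M_N$ for arbitrarily large $N$ and exploits the matrix-ordering axiom $A^* C_N A \subseteq C_m$ to propagate positivity down from large matrix levels, so that each separating map is $m$-positive for every $m$ of interest. Once this upgrade is in place, verifying that $\varphi$ is a complete order embedding is routine bookkeeping.
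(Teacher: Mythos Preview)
The paper does not prove this statement; it is quoted as a foundational result of Choi--Effros with a citation and no proof, so there is nothing in the paper to compare against.

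Your overall strategy is the standard one and is essentially correct, but your identification of the ``main obstacle'' reflects a misunderstanding. The passage from positivity of the scalar functional $\tilde f$ on $C_n$ to \emph{complete} positivity of $\Phi:\cS\to M_n$ is automatic from the compatibility axiom $A^*C_mA\subseteq C_n$ and requires no amplification to larger $M_N$. Concretely: given $(s_{ij})\in C_m$ and vectors $\xi_1,\dots,\xi_m\in\bC^n$ with $\xi_i=(\xi_{ik})_k$, form $A=(\xi_{ik})\in M_{m,n}$. Then $A^*(s_{ij})A\in C_n$ by the axiom, and a direct computation gives
\[
\tilde f\bigl(A^*(s_{ij})A\bigr)=\sum_{i,j}\langle \Phi(s_{ij})\xi_j,\xi_i\rangle,
\]
so positivity of $\tilde f$ on $C_n$ already forces $\Phi$ to be $m$-positive for every $m$. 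Thus what you flag as the hard step is a one-line consequence of the axioms, not a genuine obstacle.

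A smaller point you gloss over: normalizing $\tilde f$ so that $\tilde f(I_n\otimes e)=1$ does \emph{not} by itself give $\Phi(e)=I_n$; it only gives $\Phi(e)\geq 0$ with $\tr\Phi(e)=1$. To obtain a genuinely unital map one must either compress to the support projection of $\Phi(e)$ and conjugate by $\Phi(e)^{-1/2}$ there, or bypass the issue entirely by taking the direct sum of all cp (not necessarily unital) maps and then observing that $\varphi(e)$ is a strictly positive element whose inverse square root conjugates $\varphi$ to a unital complete order embedding. Either fix is routine, but your sentence ``$\varphi(e)=I_{\cH}$ is automatic from unitality of each $\Phi_\alpha$'' presupposes something you have not established.
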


It will be useful to consider matricial states on an operator system $\cS$.  By way of notation, for each $n \in \bN$, we let $S_n(\cS)$ be the set of all ucp maps from $\cS$ into $M_n$, and we let $S_{\infty}(\cS)=\bigcup_{n \in \bN} S_n(\cS)$.  We will often use the term \textbf{state} to refer to elements of $S_1(\cS)$.

Any operator system $\cS$ has a sequence of matrix norms which, when completed, give $\cS$ an operator space structure.  Indeed, if $(\cS,\{C_n\}_{n=1}^{\infty},e)$ is the operator system structure on $\cS$ and $X \in M_n(\cS)$, the norms $$\|X\|_n=\inf \left\{r>0: \begin{pmatrix} rI_n & X \\ X^* & rI_n\end{pmatrix} \in C_{2n} \right\}$$
make $\cS$ into a matricially normed space (see, for example, \cite[Chapter 13]{paulsen02}).

Before moving on to tensor products, we require some information about duals of operator systems.  The Banach space dual of an operator system $\cS$ can be given the structure of a matrix-ordered $*$-vector space \cite[Lemma 4.2, Lemma 4.3]{choieffros}.  This structure is given as follows: let $\cS^d$ denote the Banach space dual of $\cS$.  Given $f \in \cS^d$, we define $f^* \in \cS^d$ by $f^*(x)=\overline{f(x^*)}$ for $x \in \cS$, which makes $\cS^d$ into a $*$-vector space.  We say an element $(f_{ij}) \in M_n(\cS^d)$ is positive provided that the map $F:\cS \to M_n$ given by $F(x)=(f_{ij}(x))$ is completely positive.  If we also assume that $\cS$ is finite-dimensional, then $\cS^d$ becomes an operator system with order unit given by a faithful state on $\cS$ \cite{choieffros}.  In fact, if $\cS$ is finite-dimensional, then $\cS^{dd}$ and $\cS$ are completely order isomorphic via the canonical map $i:\cS \to \cS^{dd}$.

\begin{mydef}
\emph{(Kavruk-Paulsen-Todorov-Tomforde, \cite{KPTT})}
Let $\cS$ and $\cT$ be operator systems.  A collection of matricial cones $\tau=\{C_n\}_{n=1}^{\infty}$ with $C_n \subseteq M_n(\cS \otimes \cT)_h$ is said to be an \textbf{operator system structure} on $\cS \otimes \cT$ if
\begin{enumerate}
\item
$(\cS \otimes \cT,\{C_n\}_{n=1}^{\infty},1_{\cS} \otimes 1_{\cT})$ is an operator system,
\item
$(s_{ij} \otimes t_{k\ell}) \in C_{nm}$ whenever $(s_{ij}) \in M_n(\cS)_+$, $(t_{k\ell}) \in M_m(\cT)_+$ and $n,m \in \bN$, and
\item
whenever $n,k \in \bN$ and $\varphi \in S_n(\cS)$ and $\psi \in S_k(\cT)$, then $\varphi \otimes \psi \in S_{nk}(\cS \otimes \cT)$ with respect to the collection of matricial cones $\tau=\{C_n\}_{n=1}^{\infty}$.
\end{enumerate}
We denote by $\cS \otimes_{\tau} \cT$ the resulting operator system.
\end{mydef}

Let $\mathcal{O}$ be the category of operator systems with ucp maps as the morphisms.  Following the definitions in \cite{KPTT}, we say that a mapping $\tau:\mathcal{O} \times \mathcal{O} \to \mathcal{O}$ given by $(\cS,\cT) \mapsto \cS \otimes_{\tau} \cT$ is an \textbf{operator system tensor product} if for all operator systems $\cS$ and $\cT$, the matrix ordering on $\tau(\cS,\cT)$ is an operator system structure on $\cS \otimes \cT$.  We say that an operator system tensor product $\tau$ is \textbf{functorial} if it satisfies the following property:

\begin{itemize}
\item
If $\cS_1$ and $\cT_1$ are operator systems and $\varphi:\cS \to \cS_1$ and $\psi:\cT \to \cT_1$ are ucp maps, then $\varphi \otimes \psi: \cS \otimes_{\tau} \cS_1 \to \cT \otimes_{\tau} \cT_1$ is ucp.
\end{itemize}

\begin{mydef}
\emph{(Kavruk-Paulsen-Todorov-Tomforde, \cite{KPTT})}
Let $\cS$ and $\cT$ be operator systems.  The \textbf{minimal tensor product} of $\cS$ and $\cT$ is the vector space $\cS \otimes \cT$, with order unit $1 \otimes 1$, equipped with positive cones in $M_n(\cS \otimes \cT)$ given by the set $C_n^{\min}(\cS,\cT)$ of all $X \in M_n(\cS \otimes \cT)$ for which $X=X^*$ and $\varphi \otimes \psi(X) \geq 0$ whenever $\varphi \in S_{\infty}(\cS)$ and $\psi \in S_{\infty}(\cT)$.
\end{mydef}

\begin{mydef}
\emph{(Kavruk-Paulsen-Todorov-Tomforde, \cite{KPTT})}
Given operator systems $\cS,\cT$, the \textbf{commuting tensor product} of $\cS$ and $\cT$ is the vector space $\cS \otimes \cT$ with order unit $1 \otimes 1$, with positive cones $C_n^{\text{comm}}(\cS,\cT)$ given by the following property: $X \in M_n(\cS \otimes \cT)_h$ is in $C_n^{\text{comm}}(\cS,\cT)$ if and only if whenever $\cH$ is a Hilbert space and $\varphi:\cS \to \bofh$ and $\psi:\cT \to \bofh$ are ucp maps with commuting ranges, then $\varphi \cdot \psi(X) \geq 0$, where $\varphi \cdot \psi(s \otimes t):=\varphi(s)\psi(t)$ for all $s \in \cS$ and $t \in \cT$.
\end{mydef}

\begin{mydef}
\emph{(Kavruk-Paulsen-Todorov-Tomforde, \cite{KPTT})}
Let $\cS,\cT$ be operator systems.  For each $n \in \bN$, define the set $D_n^{\max}(\cS,\cT)$ to be the set of all $X \in M_n(\cS \otimes \cT)_h$ for which there exist $S \in M_k(\cS)_+$, $T \in M_m(\cT)_+$ and a linear map $A:\bC^k \otimes \bC^m \to \bC^n$ such that $$X=A(S \otimes T)A^*.$$
Then the \textbf{maximal tensor product} of $\cS$ and $\cT$ is defined to be the operator system $(\cS \otimes \cT,1 \otimes 1,C_n^{\max}(\cS,\cT))$, where $$C_n^{\max}(\cS,\cT)=\{ X \in M_n(\cS \otimes \cT)_h: \forall \ee>0, X+\ee 1 \in D_n^{\max}(\cS,\cT)\}.$$
\end{mydef}

Each of $\min$, $c$ and $\max$ are functorial operator system tensor products \cite{KPTT}.  For finite-dimensional operator systems, the min and max tensor products are dual to each other.

\begin{pro}
\emph{(Farenick-Paulsen, \cite{FP})}
\label{dualofminmax}
If $\cS$ and $\cT$ are finite-dimensional operator systems, then $(\cS \otimes_{\min} \cT)^d$ is completely order isomorphic to $\cS^d \otimes_{\max} \cT^d$, and $(\cS \otimes_{\max} \cT)^d$ is completely order isomorphic to $\cS^d \otimes_{\min} \cT^d$.
\end{pro}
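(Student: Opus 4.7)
Because $\cS$ and $\cT$ are finite-dimensional, the canonical map $\cS \to \cS^{dd}$ noted just before the statement is a complete order isomorphism. Hence the two claimed identifications are equivalent: once one is established, the other follows by applying it to $\cS^d, \cT^d$ in place of $\cS, \cT$ and dualizing. I plan to prove the first identification,
\[ (\cS \otimes_{\min} \cT)^d \;\cong\; \cS^d \otimes_{\max} \cT^d. \]
Under the standard pairing both sides share the underlying vector space $\cS^d \otimes \cT^d$, and if $e_\cS, e_\cT$ are faithful states serving as Archimedean order units for $\cS^d, \cT^d$, then $e_\cS \otimes e_\cT$ is the order unit of the $\max$ tensor product and is also a faithful state on $\cS \otimes_{\min} \cT$, hence an order unit for its dual. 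The identity map is thus unital, and the content is matching the matrix cones $C_n^{\max}(\cS^d, \cT^d)$ and $M_n((\cS \otimes_{\min} \cT)^d)_+$.

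The inclusion $C_n^{\max}(\cS^d, \cT^d) \subseteq M_n((\cS \otimes_{\min} \cT)^d)_+$ follows directly from the definitions. Given $X + \ee(e_\cS \otimes e_\cT) = A(S \otimes T)A^* \in D_n^{\max}(\cS^d, \cT^d)$ with $S \in M_k(\cS^d)_+$ and $T \in M_m(\cT^d)_+$, positivity of $S$ is equivalent to the associated map $\hat S: \cS \to M_k$ being completely positive, and similarly for $\hat T: \cT \to M_m$. The defining property of $\otimes_{\min}$ forces $\hat S \otimes \hat T: \cS \otimes_{\min} \cT \to M_{km}$ to be completely positive, so $S \otimes T \in M_{km}((\cS \otimes_{\min} \cT)^d)_+$; conjugation by $A$ preserves this, and taking $\ee \to 0$ (together with closedness of positive cones in finite dimension) yields $X$ positive.

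The reverse inclusion is the main point. Let $F \in M_n((\cS \otimes_{\min} \cT)^d)_+$, with associated completely positive map $\hat F: \cS \otimes_{\min} \cT \to M_n$. Using Choi--Effros, fix complete order embeddings $\iota_\cS: \cS \hookrightarrow M_p$ and $\iota_\cT: \cT \hookrightarrow M_q$; by the spatial/functorial nature of $\otimes_{\min}$, $\cS \otimes_{\min} \cT$ sits inside $M_p \otimes M_q = M_{pq}$ as an operator subsystem. By Arveson's extension theorem, extend $\hat F$ to a completely positive map $\widetilde F: M_{pq} \to M_n$, and write it in Kraus form $\widetilde F(X) = \sum_{k=1}^K V_k^* X V_k$ with $V_k: \bC^n \to \bC^{pq}$. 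Packaging the $V_k$ into a single $W: \bC^n \to \bC^{pq} \otimes \bC^K$ yields $\widetilde F(X) = W^*(X \otimes I_K) W$, and in particular
\[ \hat F(s \otimes t) \;=\; W^*\bigl(\iota_\cS(s) \otimes (\iota_\cT(t) \otimes I_K)\bigr) W. \]
Translated back to $M_n(\cS^d \otimes \cT^d)$, this reads $F = W^*(S_0 \otimes T_0) W$, where $S_0 \in M_p(\cS^d)_+$ encodes the completely positive embedding $\iota_\cS$ and $T_0 \in M_{qK}(\cT^d)_+$ encodes the completely positive map $t \mapsto \iota_\cT(t) \otimes I_K$. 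Setting $A = W^*$ exhibits $F$ as an element of $D_n^{\max}(\cS^d, \cT^d) \subseteq C_n^{\max}(\cS^d, \cT^d)$.

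The hard part is this reverse inclusion: one must produce, for each completely positive $\hat F$ on the $\min$ tensor product, an explicit $A(S \otimes T)A^*$ decomposition of the dual element. The Arveson extension plus Kraus representation makes this concrete, crucially using that $\cS$ and $\cT$ are finite-dimensional so that they embed into matrix algebras and Kraus decompositions are available.
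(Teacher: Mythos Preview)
The paper does not supply its own proof of this proposition; it is quoted from Farenick--Paulsen \cite{FP}. Your argument nonetheless has a genuine gap. You invoke Choi--Effros to obtain complete order embeddings $\iota_{\cS}\colon \cS \hookrightarrow M_p$ and $\iota_{\cT}\colon \cT \hookrightarrow M_q$ into \emph{matrix algebras}, but Choi--Effros only guarantees embeddings into $\cB(\cH)$, and a finite-dimensional operator system need not admit a complete order embedding into any $M_p$. Indeed, if such an embedding existed, the universal property of the $C^*$-envelope would give a surjective $*$-homomorphism from the finite-dimensional algebra $C^*(\iota_{\cS}(\cS)) \subseteq M_p$ onto $C_e^*(\cS)$, forcing $C_e^*(\cS)$ to be finite-dimensional. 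The operator system $\cV_n$ central to this very paper is already a counterexample: its $C^*$-envelope is $\cU_{nc}(n)$, which is infinite-dimensional for $n \geq 2$, so $\cV_n$ embeds completely order isomorphically into no matrix algebra. Your subsequent Arveson extension to $M_{pq}$ and finite Kraus decomposition therefore cannot be carried out as written.

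The strategy can be repaired, but not trivially: one embeds $\cS \hookrightarrow \cB(\cH_1)$ and $\cT \hookrightarrow \cB(\cH_2)$, extends $\hat F$ to $\cB(\cH_1 \otimes \cH_2)$, and then must approximate---for instance by compressing to finite-dimensional subspaces of the $\cH_i$---to land in $D_n^{\max}(\cS^d,\cT^d)$ only up to an $\ee$-perturbation. This is precisely where the Archimedeanization in the definition of $C_n^{\max}$ becomes essential, whereas your argument purports to land in $D_n^{\max}$ on the nose.
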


Two more tensor products are of interest: the \textbf{essential left} and \textbf{essential right} tensor products.

\begin{mydef}
\emph{(Kavruk-Paulsen-Todorov-Tomforde, \cite{KPTT})}
Let $\cS,\cT$ be operator systems.  Define the operator system $\cS \otimes_{\el} \cT$ to be the operator system structure arising from the inclusion $\cS \otimes \cT \subseteq \cI(\cS) \otimes_{\max} \cT$, where $\cI(\cS)$ is the injective envelope of $\cS$.  (See \cite{hamana} or \cite[Chapter 15]{paulsen02} for more on injective envelopes.)  Similarly, define $\cS \otimes_{\er} \cT$ to be the operator system structure arising from the inclusion $\cS \otimes \cT \subseteq \cS \otimes_{\max} \cI(\cT)$.
\end{mydef}

The tensor products $\er$ and $\el$ are examples of asymmetric tensor products; in fact, the map $s \otimes t \mapsto t \otimes s$ induces a complete order isomorphism $\cS \otimes_{\er} \cT \simeq \cT \otimes_{\el} \cS$ and $\cS \otimes_{\el} \cT \simeq \cT \otimes_{\er} \cS$.

Given two operator system tensor products $\alpha,\beta$, we will write $\alpha \leq \beta$ to mean that whenever $\cS,\cT$ are operator systems, the identity map $\id:\cS \otimes_{\beta} \cT \to \cS \otimes_{\alpha} \cT$ is ucp.  For example, we have the following (see \cite{KPTT}): $$\min \leq \er, \, \el \leq c \leq \max.$$
For operator system tensor products $\alpha,\beta$, we say that an operator system $\cS$ is $(\alpha,\beta)$-nuclear if $\cS \otimes_{\alpha} \cT=\cS \otimes_{\beta} \cT$ for all operator systems $\cT$.  Equivalently, $\cS$ is $(\alpha,\beta)$-nuclear if the identity map $\id:\cS \otimes_{\alpha} \cT \to \cS \otimes_{\beta} \cT$ is a complete order isomorphism for every operator system $\cT$.

Frequently, the theory of operator system tensor products has been motivated by the theory of operator space tensor products.  In particular, suppose that $X$ is an operator space contained in $\bofh$.  Then there is a canonical operator system that contains a completely isometric copy of $X$; namely, one may define $$\cS_X=\left\{ \begin{pmatrix} \lambda I_{\cH} & x \\ y^* & \mu I_{\cH} \end{pmatrix} \in M_2(\bofh): x,y \in X, \, \lambda,\mu \in \bC \right\},$$
which is equipped with the complete isometry $X \hookrightarrow \cS_X$ given by sending $x \in X$ to the matrix $\begin{pmatrix} 0 & x \\ 0 & 0 \end{pmatrix}$. See \cite[Chapter 8]{paulsen02} for more information on $\cS_X$.  It is left to the reader to check that $\cS_X$ does not depend on the embedding $X \subseteq \bofh$, up to unital complete order isomorphism.  For operator spaces $X$ and $Y$, considering the copy of $X \otimes Y$ inside of $\cS_X \otimes \cS_Y$ gives rise to operator space tensor products.  For any operator spaces $X,Y$ and operator system tensor product $\tau$, there is a natural operator space tensor product, denoted by $X \otimes^{\tau} Y$, given by the inclusion of $X \otimes Y$ in $\cS_X \otimes_{\tau} \cS_Y$.  We will refer to the tensor product $X \otimes^{\tau} Y$ as the \textbf{induced operator space tensor product} of $X$ and $Y$ (see \cite{KPTT}).  Two important operator space tensor products are the injective tensor product and the projective tensor product, which we will denote by $X \check{\otimes} Y$ and $X \widehat{\otimes} Y$, respectively (see \cite{blecherpaulsen}).  The relation between tensor products of operator systems of the form $\cS_X$ and operator spaces is outlined in the following theorem.

\begin{mythe}
\emph{(Kavruk-Paulsen-Todorov-Tomforde, \cite{KPTT})}
\label{canonicalopsys}
Let $X,Y$ be operator spaces.  The following are true:
\begin{enumerate}
\item
$X \otimes^{\min} Y=X \check{\otimes} Y$ completely isometrically.
\item
$X \otimes^{\max} Y=X \widehat{\otimes} Y$ completely isometrically.
\end{enumerate}
\end{mythe}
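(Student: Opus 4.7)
The plan is to use the canonical corner embedding of $X\otimes Y$ into $\cS_X\otimes\cS_Y$ sending $x\otimes y$ to $\begin{pmatrix}0&x\\0&0\end{pmatrix}\otimes\begin{pmatrix}0&y\\0&0\end{pmatrix}$. Under the identification $M_2\otimes M_2\simeq M_4$ this places $X\otimes Y$ in a single fixed off-diagonal entry of $M_4$, so the induced operator space norm on $X\otimes Y$ coming from any operator system tensor structure on $\cS_X\otimes\cS_Y$ can be read off by restricting to that corner. Throughout I fix completely isometric inclusions $X\subseteq\bofh$ and $Y\subseteq\cB(\cK)$, which extend to complete order embeddings $\cS_X\hookrightarrow M_2(\bofh)$ and $\cS_Y\hookrightarrow M_2(\cB(\cK))$.

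Part (1) is the easy half, handled by the injectivity of $\otimes_{\min}$: the pair of embeddings above yields a complete order embedding
\[
\cS_X\otimes_{\min}\cS_Y\ \hookrightarrow\ M_2(\bofh)\otimes_{\min} M_2(\cB(\cK))\ \subseteq\ M_4(\cB(\cH\otimes\cK)).
\]
An element $u\in M_n(X\otimes Y)$ therefore appears in $M_n(M_4(\cB(\cH\otimes\cK)))=M_{4n}(\cB(\cH\otimes\cK))$ as a block matrix whose only nonzero block is $u$ itself in the fixed corner. Its norm equals the norm of $u$ in $M_n(\cB(\cH\otimes\cK))$, which by definition of the operator space injective tensor product is $\|u\|_{M_n(X\check\otimes Y)}$, yielding the complete isometry.

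Part (2) is where the real work lies, because $\otimes_{\max}$ is not injective and the corner embedding of part (1) has no direct analogue. My plan is to match the two norms via their factorization presentations. On the projective side one has $\|u\|_{X\widehat\otimes Y}=\inf\|\alpha\|\|x\|\|y\|\|\beta\|$ over representations $u=\alpha(x\otimes y)\beta$ with $x\in M_p(X)$, $y\in M_q(Y)$ and $\alpha,\beta$ scalar; on the max side, the cone $D_n^{\max}(\cS_X,\cS_Y)$ consists of elements $A(S\otimes T)A^*$ with $S\in M_k(\cS_X)_+$ and $T\in M_m(\cS_Y)_+$. To show $\|u\|_{X\otimes^{\max} Y}\leq\|u\|_{X\widehat\otimes Y}$ I would take a near-optimal projective factorization and use Paulsen's off-diagonal trick to lift the contractions $x,y$ to positive elements of $M_{2p}(\cS_X)_+$ and $M_{2q}(\cS_Y)_+$, producing a pure positive tensor whose appropriate scalar compression realizes $u$ (suitably placed) in a witness of the $\max$ norm. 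For the reverse bound, any jointly completely bounded bilinear map $\phi:X\times Y\to\bofh$ of norm $\leq 1$ dilates, again via Paulsen, to ucp maps on $\cS_X$ and $\cS_Y$; functoriality of $\otimes_{\max}$ then produces a ucp map controlling $\|\phi(u)\|$ by $\|u\|_{X\otimes^{\max} Y}$, and taking the supremum over $\phi$ recovers the projective norm. The main obstacle is precisely the bookkeeping in this factorization matching: threading Paulsen's $2\times 2$ dilations through the $km\times km$ factorizations in $D_n^{\max}$ while correctly tracking the $M_2\otimes M_2=M_4$ corner identification, so that all scalar prefactors cancel and the two norms agree on the nose rather than up to a dimension-dependent constant.
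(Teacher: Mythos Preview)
The paper does not actually prove this theorem; it is quoted from \cite{KPTT} without argument. The closest proofs in the paper are those of the $\cS_X^0$ analogues, Theorems \ref{canonicalmin} and \ref{canonicalmax}, and these are the natural point of comparison. Your part (1) is correct and matches the argument for Theorem \ref{canonicalmin}. For the first half of part (2), the inequality $\|u\|_{X\otimes^{\max}Y}\le\|u\|_{X\widehat\otimes Y}$ is obtained in the paper (and in \cite{KPTT}) far more cheaply than you propose: once one checks that the induced structure on $X\otimes Y$ is an operator space tensor product in the sense of \cite{blecherpaulsen}, the projective norm dominates it automatically, with no factorisation gymnastics required.

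There is, however, a genuine gap in your argument for the reverse inequality $\|u\|_{X\widehat\otimes Y}\le\|u\|_{X\otimes^{\max}Y}$. You propose to realise the projective norm as a supremum over jointly completely bounded bilinear maps $\phi:X\times Y\to\bofh$ and then dilate each such $\phi$ ``via Paulsen'' to a pair of ucp maps on $\cS_X$ and $\cS_Y$, so that functoriality of $\otimes_{\max}$ can be applied to $\Phi\otimes\Psi$. But Paulsen's off-diagonal trick converts a single completely contractive \emph{linear} map $X\to\cB(\cH)$ into a ucp map $\cS_X\to M_2(\cB(\cH))$; it has no bilinear analogue. A jointly cb bilinear map need not split as, nor dilate to, a tensor of two separate completely contractive maps, so there is in general no pair $(\Phi,\Psi)$ available to tensor. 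The paper's proof of Theorem \ref{canonicalmax} avoids duality entirely and runs in the forward direction: starting from $\|U\|^{\max}<1$, it writes out the $D_{2p}^{\max}$ factorisation $T(P\otimes Q)T^*$, decomposes $P\in M_k(\cS_X^0)_+$ and $Q\in M_m(\cS_Y^0)_+$ according to their $2\times2$ block structure, and extracts from those blocks an explicit projective factorisation $U=A_0(\cX_0\otimes\cY_0)B_0^*$ with every factor of norm at most $1$. That hands-on block analysis of the $D^{\max}$ witness is exactly the ``bookkeeping'' you flagged as the main obstacle, and it cannot be sidestepped by the bilinear dilation you sketch.
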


Similar to the operator system $\cS_X$, one may define another ``canonical" operator system of an operator space $X$ by letting $$\cS_X^0=\left\{ \begin{pmatrix} \lambda I_{\cH} & x \\ y^* & \lambda I_{\cH} \end{pmatrix} \in M_2(\bofh): x,y \in X, \, \lambda \in \bC \right\}.$$
If $\tau$ is an operator system tensor product and $X,Y$ are operator spaces, then we shall denote by $X \otimes_0^{\tau} Y$ the operator space structure on $X \otimes Y$ induced by the inclusion $X \otimes Y \subseteq \cS_X^0 \otimes_{\tau} \cS_Y^0$.  The analogous result to Theorem \ref{canonicalopsys} holds for $\cS_X^0$ as well.  For completeness, we include the proofs.

\begin{lem}
Let $X,Y$ be operator spaces.  Then the inclusion $X \otimes Y \subseteq \cS_X^0 \otimes \cS_Y^0$ gives rise to an operator space tensor product of $X$ and $Y$; that is, the following conditions hold (in the sense of \cite{blecherpaulsen}):
\begin{enumerate}
\item
If $x \in M_n(X)$ and $y \in M_m(Y)$, then $$\|x \otimes y\|_{M_{nm}(X \otimes_0^{\tau} Y)} \leq \|x\|_{M_n(X)} \|y\|_{M_m(Y)}.$$
\item
If $\phi:X \to M_n$ and $\psi:Y \to M_m$ are completely bounded maps, then $\phi \otimes \psi: X \otimes_0^{\tau} Y \to M_{mn}$ is completely bounded, with $\| \phi \otimes \psi \|_{cb} \leq \|\phi \|_{cb} \|\phi \|_{cb}$.
\end{enumerate}
\end{lem}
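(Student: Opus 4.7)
The plan is to verify (1) and (2) by exploiting the complete isometries $\iota_X : X \hookrightarrow \cS_X^0$ and $\iota_Y : Y \hookrightarrow \cS_Y^0$, together with two standard features of operator system tensor products: the axiom that a tensor of positives is positive, and functoriality. The only nontrivial step is a Paulsen-style lift of a completely contractive map on $X$ to a ucp map on $\cS_X^0$.

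For (1), the strategy is as follows. Since $\iota_X$ and $\iota_Y$ are complete isometries, for any $r > \|x\|_{M_n(X)}$ and $s > \|y\|_{M_m(Y)}$ the defining formula for the operator system matrix norm supplies positive matrices
\[
\begin{pmatrix} rI_n & \iota_X(x) \\ \iota_X(x)^* & rI_n \end{pmatrix} \in M_{2n}(\cS_X^0)_+, \qquad \begin{pmatrix} sI_m & \iota_Y(y) \\ \iota_Y(y)^* & sI_m \end{pmatrix} \in M_{2m}(\cS_Y^0)_+.
\]
Axiom (2) of an operator system tensor product then places their tensor in $M_{4nm}(\cS_X^0 \otimes_\tau \cS_Y^0)_+$. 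Compressing to the $2 \times 2$ principal submatrix indexed by the ``outer corner'' blocks produces a positive matrix with diagonal entries $rs$ and off-diagonal entry $\iota_X(x) \otimes \iota_Y(y)$, which bounds $\|x \otimes y\|_{M_{nm}(X \otimes_0^\tau Y)}$ by $rs$; letting $r \searrow \|x\|$ and $s \searrow \|y\|$ gives (1).

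For (2), after rescaling I may assume $\|\phi\|_{cb}, \|\psi\|_{cb} \leq 1$, and define the Paulsen lifts
\[
\tilde\phi : \cS_X^0 \to M_{2n}, \qquad \tilde\phi\begin{pmatrix} \lambda I & x \\ y^* & \lambda I \end{pmatrix} = \begin{pmatrix} \lambda I_n & \phi(x) \\ \phi(y)^* & \lambda I_n \end{pmatrix},
\]
and similarly $\tilde\psi : \cS_Y^0 \to M_{2m}$. The main obstacle is verifying that these lifts are ucp; I would either adapt the standard off-diagonal trick of \cite[Chapter 8]{paulsen02} to the $\cS_X^0$ setting, or bypass it by restricting the known ucp lift on $\cS_X$ to the sub-operator-system $\cS_X^0 \subseteq \cS_X$. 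Granting this, functoriality of $\tau$ renders $\tilde\phi \otimes \tilde\psi : \cS_X^0 \otimes_\tau \cS_Y^0 \to M_{2n} \otimes_\tau M_{2m}$ ucp, and $(\min,\max)$-nuclearity of matrix algebras identifies the codomain with $M_{4nm}$ in its canonical structure. Restricting to $X \otimes Y \subseteq \cS_X^0 \otimes_\tau \cS_Y^0$, the image of $x \otimes y$ lies in a subspace of $M_{4nm}$ completely isometric to $M_n \otimes M_m = M_{nm}$; compressing back to $M_{nm}$ yields $\phi \otimes \psi : X \otimes_0^\tau Y \to M_{nm}$ as completely contractive, which after restoring the $\|\phi\|_{cb}\|\psi\|_{cb}$ factor gives (2).
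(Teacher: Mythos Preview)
Your proposal is correct and follows essentially the same approach as the paper's proof. For (1) you spell out directly the positivity-and-compression argument that the paper obtains by citing \cite[Proposition 3.4]{KPTT}; for (2) your Paulsen lifts $\tilde\phi,\tilde\psi$ are exactly the paper's maps $\Phi,\Psi$, and the compression to recover $\phi\otimes\psi$ is the same. One small correction: in (2) you invoke ``functoriality of $\tau$'', but the lemma does not assume $\tau$ is functorial. The right justification---and the one the paper uses---is Property~(3) in the definition of an operator system structure, which guarantees that $\tilde\phi\otimes\tilde\psi:\cS_X^0\otimes_\tau\cS_Y^0\to M_{4nm}$ is ucp whenever $\tilde\phi,\tilde\psi$ are ucp maps into matrix algebras.
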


\begin{proof}
By \cite[Proposition 3.4]{KPTT}, treating $\cS_X^0 \otimes_{\tau} \cS_Y^0$ as an operator space yields an operator space tensor product of the operator spaces $\cS_X^0$ and $\cS_Y^0$.  Since the inclusions $X \subseteq \cS_X^0$ and $Y \subseteq \cS_Y^0$ are complete isometries, condition (1) follows since it holds for the operator space tensor product $\cS_X^0 \otimes_{\tau} \cS_Y^0$.

To show that condition (2) holds, we may assume without loss of generality that $\phi$ and $\psi$ are completely contractive.  Let $\Phi:\cS_X^0 \to M_2(M_n)$ be defined by $$\Phi \left( \begin{pmatrix} \lambda & x_1 \\ x_2^* & \lambda \end{pmatrix} \right)=\begin{pmatrix} \lambda I_n & \phi(x_1) \\ \phi(x_2)^* & \lambda I_n \end{pmatrix}.$$
The proof that $\Phi$ is ucp is standard and analogous to \cite[Lemma 8.1]{paulsen02}.  Similarly, the map $\Psi:\cS_Y^0 \to M_2(M_m)$ given by $$\Psi \left( \begin{pmatrix} \lambda & y_1 \\ y_2^* & \lambda \end{pmatrix} \right)=\begin{pmatrix} \lambda I_m & \psi(y_1) \\ \psi(y_2)^* & \lambda I_m \end{pmatrix}$$
is unital and completely positive.  By Property (3) of operator system tensor products, $\Phi \otimes \Psi: \cS_X^0 \otimes_{\tau} \cS_Y^0 \to M_{4mn}$ is ucp.  Compressing to a corner block yields $\phi \otimes \psi$, so that $\|\phi \otimes \psi \|_{cb} \leq 1$.
\end{proof}

Since the minimal operator system tensor product and the spatial operator space tensor product are injective (and equal for operator systems), the following result is immediate.

\begin{mythe}
\label{canonicalmin}
Let $X$ and $Y$ be operator spaces.  Then $X \otimes_0^{\min} Y$ is completely isometric to $X \check{\otimes} Y$.
\end{mythe}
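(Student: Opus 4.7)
The plan is to realize $\cS_X^0$ as an operator subsystem of $\cS_X$, exploit the injectivity of the minimal operator system tensor product, and then reduce to Theorem \ref{canonicalopsys}(1).

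First, I would check that the inclusion $\cS_X^0 \hookrightarrow \cS_X$ is a complete order embedding. Both spaces are defined as concrete unital self-adjoint subspaces of $M_2(\bofh)$, and $\cS_X^0$ is exactly the subset of $\cS_X$ whose diagonal entries are equal. Since both inherit their matricial positive cones from $M_n(M_2(\bofh))$, the inclusion preserves and reflects matrix positivity. Moreover, the canonical embedding $X \hookrightarrow \cS_X^0$ sending $x$ to $\begin{pmatrix} 0 & x \\ 0 & 0 \end{pmatrix}$ factors through the standard embedding $X \hookrightarrow \cS_X$ of \cite[Chapter 8]{paulsen02}.

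Next, I would invoke the injectivity of the minimal operator system tensor product from \cite{KPTT}: if $\cS_1 \subseteq \cS_2$ and $\cT_1 \subseteq \cT_2$ are operator subsystem inclusions, then $\cS_1 \otimes_{\min} \cT_1 \hookrightarrow \cS_2 \otimes_{\min} \cT_2$ is a complete order embedding. This is an immediate consequence of the definition of $\min$ through matricial states: if $X \in M_n(\cS_1 \otimes \cT_1)_h$ is positive in $\cS_2 \otimes_{\min} \cT_2$, then by Arveson's extension theorem, every pair $(\varphi,\psi) \in S_\infty(\cS_1) \times S_\infty(\cT_1)$ extends to a pair in $S_\infty(\cS_2) \times S_\infty(\cT_2)$, yielding $(\varphi \otimes \psi)(X) \geq 0$ and hence $X \in C_n^{\min}(\cS_1,\cT_1)$. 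Applying this with $\cS_1 = \cS_X^0, \cS_2 = \cS_X, \cT_1 = \cS_Y^0, \cT_2 = \cS_Y$ gives a complete order embedding
\[
\cS_X^0 \otimes_{\min} \cS_Y^0 \hookrightarrow \cS_X \otimes_{\min} \cS_Y,
\]
which is in particular a complete isometry of the underlying operator spaces.

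Finally, since the canonical copy of $X \otimes Y$ sits inside $\cS_X^0 \otimes \cS_Y^0$ via the same formula that places it inside $\cS_X \otimes \cS_Y$, the complete isometry above forces the induced operator space structure $X \otimes_0^{\min} Y$ to agree with $X \otimes^{\min} Y$. By Theorem \ref{canonicalopsys}(1), the latter is completely isometric to $X \check{\otimes} Y$, which finishes the proof. I do not anticipate a real obstacle: the argument reduces entirely to the injectivity of $\min$ and the concrete containment $\cS_X^0 \subseteq \cS_X$, both of which are essentially formal. The only point deserving care is the verification that $\cS_X^0$, viewed abstractly, carries the same operator system structure as the one it inherits from $\cS_X$, and this is transparent once both are presented as subspaces of $M_2(\bofh)$.
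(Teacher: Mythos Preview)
Your proposal is correct and follows essentially the same approach as the paper, which simply remarks that the result is immediate from the injectivity of the minimal operator system tensor product and its coincidence with the spatial tensor product. You have just made the reduction explicit by routing through the inclusion $\cS_X^0 \subseteq \cS_X$ and invoking Theorem~\ref{canonicalopsys}(1), which is a perfectly natural way to unpack the paper's one-line justification.
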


The analogous result also holds for the maximal operator system tensor product.

\begin{mythe}
\label{canonicalmax}
Let $X$ and $Y$ be operator spaces.  Then $X \otimes_0^{\max} Y$ is completely isometric to $X \widehat{\otimes} Y$.
\end{mythe}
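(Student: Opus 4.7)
The strategy is to sandwich the operator space norm on $X \otimes Y$ induced by $\cS_X^0 \otimes_{\max} \cS_Y^0$ between the projective tensor norm from above and below, and then invoke Theorem \ref{canonicalopsys} (which already does the job for $\cS_X \otimes_{\max} \cS_Y$). The asymmetry with the minimal case is that, since $\otimes_{\max}$ is not injective, we cannot simply deduce the result from the inclusion $\cS_X^0 \subseteq \cS_X$; instead, we must argue both inequalities separately.

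Step one (lower bound $\|u\|_{X \widehat{\otimes} Y} \leq \|u\|_{X \otimes_0^{\max} Y}$). The canonical inclusion $\cS_X^0 \hookrightarrow \cS_X$ is a unital complete order embedding, and similarly for $Y$. By functoriality of $\otimes_{\max}$, these ucp maps induce a ucp map
$$j : \cS_X^0 \otimes_{\max} \cS_Y^0 \to \cS_X \otimes_{\max} \cS_Y$$
that acts as the identity on the copy of $X \otimes Y$ inside both operator systems. Ucp maps are completely contractive, so at every matrix level the norm on $X \otimes Y$ inherited from $\cS_X \otimes_{\max} \cS_Y$ is dominated by the norm inherited from $\cS_X^0 \otimes_{\max} \cS_Y^0$. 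Theorem \ref{canonicalopsys} identifies the former with the projective operator space tensor norm, giving the first inequality.

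Step two (upper bound $\|u\|_{X \otimes_0^{\max} Y} \leq \|u\|_{X \widehat{\otimes} Y}$). The preceding Lemma shows that $X \otimes_0^{\max} Y$ is an operator space tensor product of $X$ and $Y$ in the Blecher--Paulsen sense; in particular, property (1) of the Lemma says exactly that the canonical bilinear map
$$(x,y) \mapsto x \otimes y \, : \, X \times Y \to X \otimes_0^{\max} Y$$
is jointly completely bounded with norm at most $1$. By the universal property of the projective operator space tensor product \cite{blecherpaulsen}, this bilinear map factors through a completely contractive linear map $X \widehat{\otimes} Y \to X \otimes_0^{\max} Y$ that restricts to the identity on the algebraic tensor product, yielding the reverse inequality at every matrix level.

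Combining the two inequalities produces the complete isometric identification $X \otimes_0^{\max} Y = X \widehat{\otimes} Y$. The main conceptual obstacle is recognizing that the preceding Lemma furnishes exactly the Blecher--Paulsen axioms for an operator space tensor product, after which the universal property of $\widehat{\otimes}$ immediately supplies the direction that is \emph{not} accessible through functoriality of $\otimes_{\max}$ alone; the rest is bookkeeping.
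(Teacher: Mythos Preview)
Your proof is correct, and your Step two coincides with the paper's easy direction (both invoke the maximality of the projective norm among operator space tensor norms). For the substantive direction, however, you take a genuinely different route. The paper establishes $\|U\|_{X \widehat{\otimes} Y} \leq \|U\|_{X \otimes_0^{\max} Y}$ by a direct, hands-on argument: starting from $\|U\|^{\max}<1$, it unpacks the membership of $\begin{psmallmatrix} I & U \\ U^* & I \end{psmallmatrix}$ in $D_{2p}^{\max}(\cS_X^0,\cS_Y^0)$ into a factorization $T(P\otimes Q)T^*$, exploits the special block form of elements of $\cS_X^0$ and $\cS_Y^0$ (equal diagonal scalars) to read off equations among the blocks, and then manipulates these to extract an explicit projective factorization $U=A_0(\cX_0\otimes\cY_0)B_0^*$ with all factors of norm at most one. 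Your argument instead observes that the unital complete order embeddings $\cS_X^0\hookrightarrow\cS_X$ and $\cS_Y^0\hookrightarrow\cS_Y$ combine, by functoriality of $\otimes_{\max}$, into a ucp (hence completely contractive) map $\cS_X^0\otimes_{\max}\cS_Y^0\to\cS_X\otimes_{\max}\cS_Y$ that is the identity on $X\otimes Y$, so that the $\cS_X$ version (Theorem~\ref{canonicalopsys}) immediately yields the bound. Your approach is shorter and more conceptual, essentially reducing the $\cS_X^0$ case to the already-proven $\cS_X$ case; the paper's approach is self-contained and makes the projective factorization completely explicit, at the cost of a page of block-matrix bookkeeping.
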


\begin{proof}
Let $U \in M_p(X \otimes Y)$.  Define $\|U\|^{\max}$ to be the norm of $U$ in $\cS_X^0 \otimes_{\max} \cS_Y^0$, and define $\|U\|$ to be the norm of $U$ in $X \widehat{\otimes} Y$.  First, suppose that $\|U\|<1$.  Every operator system tensor product is an operator space tensor product \cite[Proposition 3.4]{KPTT}, so that $\| \cdot \|^{\max}$ is smaller than the projective tensor product norm \cite{blecherpaulsen}.  Hence, $\|U\|^{\max}<1$.

Conversely, suppose that $\|U\|^{\max}<1$.  As in the proof of \cite[Theorem 5.9]{KPTT}, let $e=\begin{pmatrix} e_1 & 0 \\ 0 & e_2 \end{pmatrix}$ and $f=\begin{pmatrix} f_1 & 0 \\ 0 & f_2 \end{pmatrix}$ be the identities of $\cS^0_X$ and $\cS^0_Y$, respectively; then $e \otimes f$ is the identity of $\cS^0_X \otimes_{\max} \cS^0_Y$.  Write $U=(u_{r,s}) \in M_p(X \otimes Y)$.  Then $$\begin{pmatrix} \|U\|^{\max} (e \otimes f)_p & U \\ U^* & \|U\|^{\max}(e \otimes f)_p \end{pmatrix} \in C_{2p}^{\max}(\cS_X^0,\cS_Y^0).$$
By adding $(1-\|U\|^{\max}) \begin{pmatrix} (e \otimes f)_p & 0 \\ 0 & (e \otimes f)_p \end{pmatrix}$, it follows that $$\begin{pmatrix} (e \otimes f)_p & U \\ U^* & (e \otimes f)_p \end{pmatrix} \in D_{2p}^{\max}(\cS_X^0,\cS_Y^0).$$
This implies that there are matrices $P=(P_{ij}) \in M_n(\cS_X^0)_+$, $Q=(Q_{k \ell}) \in M_m(\cS_Y^0)_+$ and $T=\begin{pmatrix} A \\ B \end{pmatrix}$ where $A=(a_{r,(i,k)})$ and $B=(b_{r,(i,k)})$ are $p \times mn$ matrices of scalars, such that $$\begin{pmatrix} (e \otimes f)_p & U \\ U^* & (e \otimes f)_p \end{pmatrix}=T(P \otimes Q)T^*.$$
Comparing blocks yields the equations $(e \otimes f)_p=A(P \otimes Q)A^*$, $U=A(P \otimes Q)B^*$, $U^*=B(P \otimes Q)A^*$ and $(e \otimes f)_p=B(P \otimes Q)B^*$.  We may write $P_{ij}=\begin{pmatrix} \alpha_{ij}e_1 & x_{ij} \\ w_{ij}^* & \alpha_{ij}e_2 \end{pmatrix}$ and $Q_{k\ell}=\begin{pmatrix} \gamma_{k\ell}f_1 & y_{k\ell} \\ z_{k\ell}^* & \gamma_{k\ell}f_2 \end{pmatrix}$ where $\alpha_{ij},\gamma_{k\ell} \in \bC$, $x_{ij},w_{ij} \in X$, and $y_{k\ell},z_{k\ell} \in Y$.  Now set $R=(\alpha_{ij})$, $S=(\gamma_{k\ell})$, $\cX=(x_{ij})$ and $\cY=(y_{k\ell})$.

The fact that $P,Q$ are positive implies that $R$ and $S$ are positive, $(w_{ij}^*)=\cX^*$, $(z_{k\ell}^*)=\cY^*$, and that for every $r>0$, we have $\|(R+rI_n)^{-1/2} \cX (R+rI_n)^{-1/2}\| \leq 1$ in $M_n(X)$ and $\|(S+rI_m)^{-1/2}\cY(S+rI_m)^{-1/2}\| \leq 1$ in $M_n(Y)$ \cite[p.~99]{paulsen02}.

Let $Re_1:=(\alpha_{ij}e_1)$, and similarly define $Re_2$, $Sf_1$ and $Sf_2$.  Looking at the blocks of the $4 \times 4$ block matrix equation $(e \otimes f)_p=A(P \otimes Q)A^*$, we obtain the equations $(e_i \otimes f_j)_p=A(Re_i \otimes Sf_j)A^*$ for $i,j=1,2$.  Therefore, $I_p=A(R \otimes S)A^*=B(R \otimes S)B^*$.  The element $U$ is only present in the $(1,4)$-block of the $4 \times 4$ block matrix, with the other blocks being equal to zero.  Hence, the equation $U=A(P \otimes Q)B^*$ in $\cS_X^0 \otimes \cS_Y^0$ gives $U=A(\cX \otimes \cY)B^*$ in $X \otimes Y$.  If $R,S$ are invertible, then let $A_0=A(R \otimes S)^{\frac{1}{2}}$ and let $B_0=B(R \otimes S)^{\frac{1}{2}}$.  Then $$U=A_0(R \otimes S)^{-1/2} (\cX \otimes \cY)(R \otimes S)^{-1/2} B_0^*=A_0[(R^{-1/2}\cX R^{-1/2}) \otimes (S^{-1/2} \cY S^{-\frac{1}{2}})]B_0^*.$$  We know that $A_0A_0^*=B_0B_0^*=I_p$.  Thus, letting $\cX_0=R^{-1/2}\cX R^{-1/2}$ and $\cY_0=S^{-1/2}\cY S^{-1/2}$, we have $U=A_0(\cX_0 \otimes \cY_0)B_0^*$ with all the matrices appearing in this factorization having norm at most one.  Therefore, $\|U\| \leq 1$.

If either of $R$ or $S$ are not invertible, then add $rI_n$ and $rI_m$ to $R$ and $S$, respectively, for $r>0$, and define new matrices $A_0=A[(R+rI_n) \otimes (S+rI_m)]^{1/2}$ and $B_0=B[(R+rI_n) \otimes (S+rI_m)]^{-1/2}$.  The corresponding factorization yields $\|U\| \leq 1+Cr$ for some $C$ that is independent of $r$.  Since this is possible for all $r>0$, we obtain $\|U\| \leq 1$.
\end{proof}

\section{The OSLLP, WEP AND DCEP for Operator Systems}

There are two $C^{\ast}$-algebraic properties that play a crucial role in Kirchberg's conjecture: the local lifting property (LLP) and the weak expectation property (WEP) (see \cite{kirchberg93}).  Here we outline these properties for operator systems, as well as the double commutant expectation property (DCEP).  For the convenience of the reader, we give some known characterizations of these properties in terms of tensor products with $\bofh$ and tensor products with $C^*(F_{\infty})$.  See \cite{quotients} for more information and proofs of the theorems in this section.

Let $\cS$ be an operator system.  We say that $\cS$ has the \textbf{operator system local lifting property} (OSLLP) if whenever $\cA$ is a unital $C^{\ast}$-algebra, $\cI \subseteq \cA$ is a two-sided ideal with $\pi:\cA \to \cA/\cI$ the canonical quotient map, and $\varphi:\cS \to \cA/\cI$ is a ucp map, then for every finite-dimensional operator system $\cT \subseteq \cS$, there is a ucp map $\psi_{\cT}:\cT \to \cA$ such that $\pi \circ \psi_{\cT}=\varphi_{|\cT}$.  This property for operator systems was initially defined in \cite{quotients}. The OSLLP can be characterized in a few ways.

\begin{mythe}
\emph{(Kavruk-Paulsen-Todorov-Tomforde, \cite{quotients})}
\label{osllp}
Let $\cS$ be an operator system.  The following are equivalent.
\begin{enumerate}
\item
$\cS$ has the OSLLP.
\item
$\cS$ is $(\min,\er)$-nuclear.
\item
$\cS \otimes_{\min} \bofh=\cS \otimes_{\max} \bofh$ for all Hilbert spaces $\cH$.
\end{enumerate}
\end{mythe}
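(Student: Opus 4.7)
My plan is to prove $(2) \Leftrightarrow (3)$ first, since it is a formal consequence of injectivity, and then address $(1) \Leftrightarrow (3)$, which is the operator-system analogue of Kirchberg's characterization of the LLP.

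For $(2) \Rightarrow (3)$, I specialize $(\min,\er)$-nuclearity to $\cT = \bofh$. Since $\bofh$ is injective, $\cI(\bofh) = \bofh$, so the defining inclusion of $\er$ collapses and $\cS \otimes_{\er} \bofh = \cS \otimes_{\max} \bofh$; combined with $(2)$ this gives $(3)$. For $(3) \Rightarrow (2)$, take an arbitrary operator system $\cT$ and embed $\cI(\cT)$ as a unital $C^{\ast}$-subalgebra of some $\cB(\cK)$. Injectivity of $\cI(\cT)$ yields a ucp retraction $E : \cB(\cK) \to \cI(\cT)$; by functoriality of $\max$, the map $\id_{\cS} \otimes E$ is a ucp left inverse to the inclusion $\cS \otimes_{\max} \cI(\cT) \hookrightarrow \cS \otimes_{\max} \cB(\cK)$, so the latter is a complete order embedding. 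The analogous embedding on the min side is automatic because $\min$ is injective. Invoking $(3)$ for $\cK$ then gives $\cS \otimes_{\min} \cI(\cT) = \cS \otimes_{\max} \cI(\cT)$, and restricting this equality to the operator subsystem $\cS \otimes \cT$ yields $\cS \otimes_{\min} \cT = \cS \otimes_{\er} \cT$.

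For $(1) \Rightarrow (3)$, the strategy is to realize $\bofh$ as a quotient $\pi : \cR \twoheadrightarrow \bofh$ of a $C^{\ast}$-algebra with the LLP, for example $\cR = C^{\ast}(F_S)$ where $F_S$ is the free group on a generating family of unitaries in $\bofh$. I would first establish $\cS \otimes_{\min} \cR = \cS \otimes_{\max} \cR$: a positive element of the min tensor has coefficients in a finite-dimensional operator subsystem $\cT \subseteq \cS$, and testing max-positivity against ucp maps into quotients $\cR / \cJ$ reduces to a ucp lifting problem that the OSLLP resolves on $\cT$. Then, using the quotient property $(\cS \otimes_{\max} \cR)/\cJ' = \cS \otimes_{\max} \bofh$ for the appropriate ideal $\cJ'$ together with the just-established coincidence of $\min$ and $\max$ on $\cS \otimes \cR$, one argues that a positive element of $\cS \otimes_{\min} \bofh$ lifts, via $\pi$, to a positive element of $\cS \otimes_{\max} \cR$, and hence pushes down to a positive element of $\cS \otimes_{\max} \bofh$.

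For $(3) \Rightarrow (1)$, let $\varphi : \cS \to \cA / \cI$ be ucp and $\cT \subseteq \cS$ finite-dimensional. Faithfully represent $\cA \subseteq \bofh$; producing a ucp lift $\psi : \cT \to \cA$ of $\varphi|_{\cT}$ reduces, via finite-dimensional duality (Proposition \ref{dualofminmax}) and the correspondence between ucp maps $\cT \to \bofh$ and positive elements of $\cT^{d} \otimes_{\min} \bofh$, to lifting a positive element from a min to a max tensor product. Hypothesis $(3)$ collapses min and max on tensors with $\bofh$, and combined with Arveson's extension theorem and the quotient behaviour of $\max$ in the second variable, this supplies the required $\psi$. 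The main obstacle will be $(1) \Rightarrow (3)$: promoting a purely local lifting property into a global tensor-product identity requires carefully tracking how $\min$ and $\max$ interact with quotients (the min functor is injective but not a quotient, which complicates the descent from $\cR$ to $\bofh$), and the finite-dimensional approximation step that bridges OSLLP with the tensor identification is the most delicate piece of the argument, exactly paralleling Kirchberg's original $C^{\ast}$-algebra theorem.
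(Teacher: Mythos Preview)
The paper does not prove this theorem at all: it is quoted from Kavruk--Paulsen--Todorov--Tomforde \cite{quotients}, and the paragraph opening Section~2 explicitly says ``See \cite{quotients} for more information and proofs of the theorems in this section.'' So there is no proof here to compare your proposal against.

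That said, your argument for $(1)\Rightarrow(3)$ has a genuine gap. You propose, as an intermediate step, to establish $\cS \otimes_{\min} \cR = \cS \otimes_{\max} \cR$ for $\cR = C^{\ast}(F_S)$ using the OSLLP of $\cS$. But by Theorem~\ref{dcep} of this very paper (also from \cite{quotients}), the equality $\cS \otimes_{\min} C^{\ast}(F_{\infty}) = \cS \otimes_{\max} C^{\ast}(F_{\infty})$ is \emph{equivalent} to $\cS$ having the DCEP, not the OSLLP. The implication ``OSLLP $\Rightarrow$ DCEP'' is exactly Kirchberg's conjecture (this is \cite[Theorem~9.1]{quotients}, invoked later in the present paper in the proofs of Theorems~\ref{uncequivalenttokirchberg} and~\ref{unitarycorrelations}). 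So your intermediate step is at least as hard as Connes' embedding problem, and your justification --- ``testing max-positivity against ucp maps into quotients $\cR/\cJ$ reduces to a ucp lifting problem'' --- does not describe any known characterization of positivity in the max tensor product. The actual proof in \cite{quotients} does not route $(1)\Rightarrow(3)$ through free-group $C^{\ast}$-algebras; it works directly with the quotient description of $\bofh$ and the finite-dimensional duality you correctly invoke for $(3)\Rightarrow(1)$.

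Your treatment of $(2)\Leftrightarrow(3)$ is correct and is essentially the standard argument.
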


Also defined in \cite{quotients}, we say that an operator system $\cS$ has the \textbf{weak expectation property} (WEP) if the canonical inclusion $i:\cS \hookrightarrow \cS^{dd}$ extends to a ucp map $\phi:\cI(\cS) \to \cS^{dd}$, where $\cI(\cS)$ is the injective envelope of $\cS$.  The WEP is in fact a form of nuclearity.

\begin{mythe}
\emph{(Kavruk-Paulsen-Todorov-Tomforde, \cite{quotients})}
\label{wepnuclearity}
An operator system $\cS$ has the WEP if and only if $\cS$ is $(\el,\max)$-nuclear.
\end{mythe}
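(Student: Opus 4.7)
The plan is to prove both directions separately. Since every operator system tensor product satisfies $\el \leq \max$, the forward direction (WEP implies $(\el,\max)$-nuclearity) reduces to showing that $\el$-positive elements are $\max$-positive when $\cS$ has WEP, while the converse requires producing the WEP extension $\phi:\cI(\cS) \to \cS^{dd}$ directly from the nuclearity hypothesis.

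For the forward direction, I would use the ucp extension $\phi:\cI(\cS) \to \cS^{dd}$ of the canonical inclusion $i$ guaranteed by WEP. Given any operator system $\cT$ and any $X \in M_n(\cS \otimes \cT)$ that is positive in $\cS \otimes_{\el} \cT$, by the very definition of $\el$ we have $X \geq 0$ in $\cI(\cS) \otimes_{\max} \cT$. Functoriality of the maximal tensor product applied to $\phi$ and $\id_{\cT}$ produces a ucp map $\phi \otimes \id_{\cT}:\cI(\cS) \otimes_{\max} \cT \to \cS^{dd} \otimes_{\max} \cT$, so $(i \otimes \id_{\cT})(X) \geq 0$ in $\cS^{dd} \otimes_{\max} \cT$. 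The remaining technical step is to verify that the canonical map $i \otimes \id_{\cT}:\cS \otimes_{\max} \cT \hookrightarrow \cS^{dd} \otimes_{\max} \cT$ is a complete order embedding; granting this, $X$ is forced to be $\max$-positive and the forward direction is complete.

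For the converse, assuming $(\el,\max)$-nuclearity, the plan is to construct $\phi$ via a duality argument. By Proposition \ref{dualofminmax}, in the finite-dimensional setting ucp maps $\cI(\cS) \to \cS^{dd}$ correspond to suitable positive elements in a tensor product built from $\cI(\cS)$ and $\cS^d$; the hypothesis $\cS \otimes_{\el} \cT = \cS \otimes_{\max} \cT$, combined with the defining embedding $\cS \otimes_{\el} \cT \subseteq \cI(\cS) \otimes_{\max} \cT$, is exactly what is needed to lift the canonical element representing $i$ from $\cS \otimes_{\max} \cT$ to $\cI(\cS) \otimes_{\max} \cT$ while preserving positivity. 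For general $\cS$, one restricts to finite-dimensional operator subsystems of $\cS^d$ and extracts $\phi$ by a weak-$\ast$ compactness argument on the net of partial extensions. The main obstacle is precisely this converse: ensuring that the constructed extension takes values in $\cS^{dd}$ rather than merely in some injective ambient $\bofh$. This is where the full force of $(\el,\max)$-nuclearity holding for every $\cT$ is essential, with the $\min$--$\max$ duality of Proposition \ref{dualofminmax} serving as the bridge between the tensor-product hypothesis and the desired ucp extension.
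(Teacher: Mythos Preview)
The paper does not give its own proof of this theorem; it is quoted from \cite{quotients} and the reader is referred there. So there is no ``paper's approach'' to compare against, only the original source argument and your proposal.

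Your forward direction is the right idea and matches the standard argument: push $X$ through $\phi\otimes\id_{\cT}$ and pull back along the inclusion $\cS\hookrightarrow\cS^{dd}$. The step you flag---that $\cS\otimes_{\max}\cT\subseteq\cS^{dd}\otimes_{\max}\cT$ is a complete order embedding---is genuine and nontrivial (it is proved in \cite{quotients}), so acknowledging it as a technical input is fine.

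The converse, however, is where your proposal has a real gap. Your plan leans on Proposition~\ref{dualofminmax}, but that result is about \emph{finite-dimensional} operator systems, and here neither $\cI(\cS)$ nor $\cS^d$ is finite-dimensional in general. There is no ``canonical element representing $i$'' living in any algebraic tensor product when $\cS$ is infinite-dimensional, so the lifting you describe is not well-posed as stated. Restricting to finite-dimensional subsystems of $\cS^d$ does not obviously help either: the partial maps you would produce go from $\cI(\cS)$ into finite-dimensional quotients of $\cS^{dd}$, not into $\cS^{dd}$ itself, and it is not clear how a weak-$*$ limit of such maps lands back in $\cS^{dd}$. The argument in \cite{quotients} avoids this entirely: it observes that $(\el,\max)$-nuclearity says precisely that the inclusion $\cS\otimes_{\max}\cT\subseteq\cI(\cS)\otimes_{\max}\cT$ is a complete order embedding for every $\cT$, and then invokes a Lance-type characterization (an inclusion $\cS\subseteq\cS_1$ is max-injective in this sense if and only if there is a ucp map $\cS_1\to\cS^{dd}$ extending the identity on $\cS$). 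Applying that characterization with $\cS_1=\cI(\cS)$ immediately yields the WEP map. Your duality route, as written, does not reach this conclusion.
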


Finally, we say that an operator system $\cS$ has the \textbf{double commutant expectation property} \cite{quotients} (DCEP), if for any unital complete order embedding $\iota:\cS \to \bofh$, there is a ucp extension $\varphi:\cI(\cS) \to \iota(\cS)''$ of $\iota$, where $\cI(\cS)$ is the injective envelope of $\cS$ and $\iota(\cS)''$ is the double commutant of $\iota(\cS)$ in $\bofh$.  The DCEP is a weaker form of nuclearity than the WEP.

\begin{mythe}
\emph{(Kavruk-Paulsen-Todorov-Tomforde, \cite{quotients})}
\label{dcep}
Let $\cS$ be an operator system.  The following are equivalent.
\begin{enumerate}
\item
$\cS$ has the DCEP.
\item
$\cS$ is $(\el,c)$-nuclear.
\item
$\cS \otimes_{\min} C^*(F_{\infty})=\cS \otimes_{\max} C^*(F_{\infty})$.
\end{enumerate}
\end{mythe}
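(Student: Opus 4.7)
The plan is to prove the cycle $(1) \Rightarrow (2) \Rightarrow (3) \Rightarrow (1)$. The structural equivalence $(1) \Leftrightarrow (2)$ repackages the DCEP as a nuclearity statement against an arbitrary second variable, while $(2) \Leftrightarrow (3)$ reduces that nuclearity to a single tensor identity with the universal test object $C^*(F_\infty)$, exploiting its OSLLP.

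For $(1) \Rightarrow (2)$, since $\el \leq c$ in general it suffices to show every $c$-positive element is $\el$-positive. Using that $\cI(\cS)$ is an injective operator system, and therefore by Choi-Effros an injective unital $C^{\ast}$-algebra, one has $\cI(\cS) \otimes_{\max} \cT = \cI(\cS) \otimes_c \cT$, so $\el$-positivity of $X \in M_n(\cS \otimes \cT)$ is equivalent to $(\Phi \cdot \psi)(X) \geq 0$ for every commuting pair of ucp maps $\Phi:\cI(\cS) \to \cB(\cK)$, $\psi:\cT \to \cB(\cK)$. Given a commuting pair $(\varphi_0,\psi)$ already defined on $(\cS,\cT)$, realize them concretely in a common $\cB(\cK)$ and apply DCEP to extend $\varphi_0$ to $\Phi:\cI(\cS) \to \varphi_0(\cS)''$; since $\psi(\cT) \subseteq \varphi_0(\cS)'$, the extension $\Phi$ still commutes with $\psi(\cT)$, producing the required pair. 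Conversely, for $(2) \Rightarrow (1)$, embed $\iota:\cS \hookrightarrow \bofh$ with commutant $\cN$. The product map $\cS \otimes_c \cN \to \bofh$, $s \otimes a \mapsto \iota(s) a$, is ucp; by $(\el,c)$-nuclearity it is also ucp on $\cS \otimes_{\el} \cN \subseteq \cI(\cS) \otimes_{\max} \cN$, and injectivity of $\bofh$ extends it to a ucp map $\cI(\cS) \otimes_{\max} \cN \to \bofh$. Restriction to the first leg gives $\Phi:\cI(\cS) \to \bofh$ extending $\iota$, and a standard multiplicative-domain computation at the unitaries of $\cN$ forces $\Phi(\cI(\cS)) \subseteq \cN' = \iota(\cS)''$, which is the DCEP.

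For $(2) \Rightarrow (3)$, the chain
\[
\cS \otimes_{\min} C^*(F_\infty) = \cS \otimes_{\er} C^*(F_\infty) \simeq C^*(F_\infty) \otimes_{\el} \cS = C^*(F_\infty) \otimes_c \cS = C^*(F_\infty) \otimes_{\max} \cS
\]
finishes it: the first equality uses that $C^*(F_\infty)$ has OSLLP, hence is $(\min,\er)$-nuclear by Theorem \ref{osllp}; the second is the $\er$/$\el$ symmetry recorded after the definition of those tensor products; the third is hypothesis $(2)$; and the last uses that $c = \max$ whenever one factor is a unital $C^{\ast}$-algebra.

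The main obstacle is $(3) \Rightarrow (1)$. My approach is to reduce to separable $\cH$, embed $\iota:\cS \hookrightarrow \bofh$, and write $\cN = \iota(\cS)'$. Since $\cN$ is then generated by countably many unitaries, choose a surjective $\ast$-homomorphism $\pi:C^*(F_\infty) \twoheadrightarrow \cN$ and define $\mu(s \otimes a) = \iota(s)\pi(a)$. As $\iota(\cS)$ and $\pi(C^*(F_\infty))$ commute, $\mu$ is ucp on $\cS \otimes_c C^*(F_\infty) = \cS \otimes_{\max} C^*(F_\infty)$, which by hypothesis $(3)$ equals $\cS \otimes_{\min} C^*(F_\infty)$. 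Injectivity of the minimal tensor product in the first variable combined with Arveson's extension theorem then extends $\mu$ to a ucp map $\overline{\mu}:\cI(\cS) \otimes_{\min} C^*(F_\infty) \to \bofh$, and $\Phi := \overline{\mu}(\,\cdot\, \otimes 1)$ gives a ucp extension of $\iota$ to $\cI(\cS)$. Since $\overline{\mu}(1 \otimes u) = \pi(u)$ is unitary for every free generator $u$, the element $1 \otimes u$ sits in the multiplicative domain of $\overline{\mu}$, so $\Phi(x)\pi(u) = \overline{\mu}(x \otimes u) = \pi(u)\Phi(x)$, forcing $\Phi(\cI(\cS)) \subseteq \cN' = \iota(\cS)''$. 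The delicate point is precisely this passage from the single tensor identity back to a commuting-pair dilation via the multiplicative-domain trick; $C^*(F_\infty)$'s role as a universal unitary-generating $C^{\ast}$-algebra is what makes the whole reduction work, and the cardinality issue is the main bookkeeping subtlety.
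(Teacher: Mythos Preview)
The paper does not prove this theorem; it is quoted from \cite{quotients} with no argument given, so there is nothing to compare your approach against. Your sketch follows the standard route and is largely sound, but two steps need repair.

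In $(1)\Rightarrow(2)$ you write ``it suffices to show every $c$-positive element is $\el$-positive''; since $\el\le c$ this direction is automatic, and what you must (and in fact do) prove is the reverse. More seriously, you invoke the DCEP to extend an \emph{arbitrary} ucp map $\varphi_0:\cS\to\cB(\cK)$ to $\cI(\cS)$, but the DCEP is only formulated for complete order \emph{embeddings}. The fix is routine: pick any concrete embedding $j:\cS\hookrightarrow\cB(\cH_0)$ and a state $\chi$ on $\cT$, and replace the pair $(\varphi_0,\psi)$ by the commuting pair $(j\oplus\varphi_0,\ \chi(\cdot)I_{\cH_0}\oplus\psi)$ into $\cB(\cH_0\oplus\cK)$. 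Now $j\oplus\varphi_0$ is a complete order embedding, DCEP applies, and compressing to the second summand recovers $(\varphi_0\cdot\psi)(X)\ge 0$.

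In $(3)\Rightarrow(1)$ you flag the cardinality issue but do not resolve it: for a general embedding $\iota:\cS\hookrightarrow\bofh$ the commutant $\cN=\iota(\cS)'$ need not be countably generated, so no surjection $C^*(F_\infty)\twoheadrightarrow\cN$ exists. One repair is to run your multiplicative-domain argument for each finite set $F$ of unitaries in $\cN$, using that $C^*(F_{|F|})$ is a retract of $C^*(F_\infty)$ so hypothesis $(3)$ still gives $\cS\otimes_{\min}C^*(F_{|F|})=\cS\otimes_{\max}C^*(F_{|F|})$; this yields ucp maps $\Phi_F:\cI(\cS)\to\bofh$ extending $\iota$ with range commuting with $F$, and a point--weak$^*$ cluster point of the net $(\Phi_F)_F$ gives a single $\Phi$ landing in $\cN'=\iota(\cS)''$.
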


All unital $C^{\ast}$-algebras are $(c,\max)$-nuclear \cite[Theorem 6.7]{KPTT}, so that the WEP and the DCEP are the same for unital $C^{\ast}$-algebras.  However, the DCEP is a weaker property in general.  Indeed, the operator system $\cT_n$ of tridiagonal matrices in $M_n$ is $(\min,c)$-nuclear but $\cT_n \otimes_{\min} \cT_n^d \neq \cT_n \otimes_{\max} \cT_n^d$ for $n \geq 3$ (see \cite{KPTT} for more information on $\cT_n$).  Thus, $\cT_n$ has the DCEP but not the WEP.

\section{Complete Quotient Maps}

The theory of operator system quotients is not as straightforward as in other categories.  Here, we give a very brief introduction to this quotient theory.  Much more information on operator system quotients can be found in \cite{quotients}.  Suppose that $\cS,\cT$ are operator systems and $\varphi:\cS \to \cT$ is a ucp map with kernel $\cJ$.  We endow the quotient vector space $\cS/\cJ$ with an operator system structure as follows.  If $q:\cS \to \cS/\cJ$ denotes the canonical (vector space) quotient map and we denote by $\dot{x}$ the image $q(x)$ of a vector $x \in \cS$, then the order unit of $\cS/\cJ$ is $\dot{1}$, while the adjoint of $\dot{x}$ is simply $\dot{x^*}$.  We define the sets $$D_n(\cS,\cJ)=\{ \dot{X} \in M_n(\cS/\cJ)_h: \exists Y \in M_n(\cS)_+ \text{ such that } q^{(n)}(Y)=\dot{X}\},$$
where $q^{(n)}$ is the $n$-fold amplification of $q$.  To ensure that the positive cones on $\cS/\cJ$ satisfy the Archimedean property, we define the cones to be $$C_n(\cS,\cJ)=\{\dot{X} \in M_n(\cS/\cJ)_h: \forall \ee>0, \, \dot{X}+\ee 1 \in D_n(\cS,\cJ)\}.$$
In general, the norm induced by the above operator system structure on $\cS/\cJ$ can differ greatly from the usual quotient norm on $\cS/\cJ$ (see \cite[Example 4.4]{quotients} or \cite[Lemma 2.1]{FP} for examples).
 
Given operator systems $\cS,\cT$ and a u.c.p. map $\varphi:\cS \to \cT$ with kernel $\cJ$, we say that $\cJ$ is \textbf{completely order proximinal} if $D_n(\cS,\cJ)=C_n(\cS,\cJ)$ for all $n$. A surjective ucp map $\varphi:\cS \to \cT$ with kernel $\cJ$ is said to be a \textbf{complete quotient map} if the induced map $\dot{\varphi}:\cS/\cJ \to \cT$ given by $\dot{\varphi}(\dot{x})=\varphi(x)$ is a complete order isomorphism.  There is a relation between complete quotient maps and complete order injections via adjoint maps.  For a linear map $\varphi:\cS \to \cT$, we define the adjoint map $\varphi^d:\cT^d \to \cS^d$ by $[\varphi^d(\psi)](s)=\psi(\varphi(s))$ for all $\psi \in \cT^d$ and $s \in \cS$.

\begin{pro}
\emph{(Farenick-Paulsen, \cite{FP})}
\label{completequotientdual}
Let $\cS$ and $\cT$ be finite-dimensional operator systems.  Then a linear map $\varphi:\cS \to \cT$ is a complete quotient map if and only if $\varphi^d:\cT^d \to \cS^d$ is a complete order injection.
\end{pro}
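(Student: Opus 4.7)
The strategy is to prove each direction separately. The forward direction (complete quotient implies dual is a complete order injection) will be a direct computation using the defining lifting property of complete quotients. The reverse direction (dual is a complete order injection implies complete quotient) I would deduce from a companion statement — complete order injection implies dual is a complete quotient — proven via Arveson's extension theorem, and then transport back to $\varphi$ via the reflexivity $\cS \cong \cS^{dd}$ that holds for finite-dimensional operator systems.

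For the forward direction, assume $\varphi$ is a complete quotient map. Complete positivity of $\varphi^d$ is automatic from cp composition: a positive $(f_{ij}) \in M_n(\cT^d)_+$ corresponds, via $t \mapsto (f_{ij}(t))$, to a cp map $F:\cT \to M_n$, and $\varphi^d(f_{ij}) = f_{ij} \circ \varphi$ corresponds to $F \circ \varphi$. Injectivity of $\varphi^d$ follows from surjectivity of $\varphi$. The substantive content is to show that positivity of $(\varphi^d(f_{ij}))$ forces positivity of $(f_{ij})$. Writing $G = F \circ \varphi$ (which is cp by hypothesis), for any $(t_{ij}) \in M_m(\cT)_+$ and any $\ee > 0$ the complete quotient property yields $(s_{ij}) \in M_m(\cS)_+$ with $\varphi^{(m)}(s_{ij}) = (t_{ij}) + \ee I_m$. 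Applying $F^{(m)}$ and using cp-ness of $G^{(m)}$ gives $F^{(m)}(t_{ij}) + \ee\,\mathrm{diag}(F(1),\dots,F(1)) \geq 0$ in $M_{mn}$; since $F(1) \geq 0$ is a fixed matrix bounded by $\|F(1)\|\cdot I_n$, the Archimedean property of the cone produces $F^{(m)}(t_{ij}) \geq 0$ as $\ee \to 0^+$, i.e., $F$ is cp.

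For the reverse direction, the key lemma I will establish first is the following companion statement: if $\psi:\cA \to \cB$ is a complete order injection of finite-dimensional operator systems, then $\psi^d:\cB^d \to \cA^d$ is a complete quotient map. Identifying $\cA$ with its image under $\psi$ inside $\cB$, a positive $(g_{ij}) \in M_n(\cA^d)_+$ corresponds to a cp map $G:\cA \to M_n$, and Arveson's extension theorem produces a cp extension $H:\cB \to M_n$ whose corresponding element $(h_{ij}) \in M_n(\cB^d)_+$ satisfies $\psi^{d(n)}(h_{ij}) = (g_{ij})$. Thus $\psi^{d(n)}$ maps $M_n(\cB^d)_+$ onto $M_n(\cA^d)_+$, from which $\ker \psi^d$ is completely order proximinal and the induced map $\cB^d/\ker \psi^d \to \cA^d$ is a complete order isomorphism. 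Applying this lemma to $\psi = \varphi^d$ yields that $\varphi^{dd}:\cS^{dd} \to \cT^{dd}$ is a complete quotient map, and the canonical complete order isomorphisms $\iota_{\cS}:\cS \to \cS^{dd}$ and $\iota_{\cT}:\cT \to \cT^{dd}$ (which intertwine $\varphi$ with $\varphi^{dd}$) transport this back to give that $\varphi$ itself is a complete quotient map.

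The main obstacle is the order-unit bookkeeping on the duals, since the operator system structure on $\cS^d$ requires a choice of faithful state as Archimedean matrix order unit. One must arrange these choices compatibly on $\cS^d, \cT^d$ (respectively $\cA^d, \cB^d$) so that $\varphi^d$ (respectively $\psi^d$) is genuinely unital, or else invoke the fact that different choices of faithful state yield completely order isomorphic dual operator systems. This same bookkeeping reappears in the Arveson step, where the extension $H$ of $G$ must be set up so as to respect the chosen units, which reduces to picking a faithful state on $\cB$ that restricts to the chosen faithful state on $\cA$.
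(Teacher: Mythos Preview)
The paper does not supply its own proof of this proposition: it is stated as a result of Farenick--Paulsen and cited to \cite{FP} without argument, so there is nothing in the paper to compare your proposal against.

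That said, your plan is sound. The forward direction is a clean direct computation, and your use of the Archimedeanisation built into the quotient cones (lifting $(t_{ij})+\ee I_m$ rather than $(t_{ij})$ itself) is exactly what is needed. For the reverse direction, proving the companion statement via Arveson extension (in its cp form for matrix-valued maps, which follows from the ucp version by a standard normalisation) and then transporting through the reflexivity $\cS\simeq\cS^{dd}$ is a correct and standard route. Your flagged obstacle about order units on the duals is real but easily resolved: for the forward direction take the order unit on $\cS^d$ to be $\delta_{\cT}\circ\varphi$, where $\delta_{\cT}$ is the chosen faithful state on $\cT$ (faithful because $\varphi$ is surjective), so that $\varphi^d$ is genuinely unital; for the companion lemma take the order unit on $\cA^d$ to be the restriction to $\cA$ of the chosen faithful state on $\cB$ (faithful because $\psi$ is a complete order embedding). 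With these choices the bookkeeping disappears.
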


\section{The $C^{\ast}$-algebra $\cU_{nc}(n)$}

For any $n \in \bN$, denote by $\cU_{nc}(n)$ the universal $C^{\ast}$-algebra of $n^2$ generators $\{u_{ij}\}_{i,j=1}^n$ with the restriction that the matrix $U=(u_{ij})$ is unitary.  This algebra was first defined by L. Brown in \cite{brown}.  We may define the operator subsystem $$\cV_n=\spn (\{1\} \cup \{u_{ij}\}_{i,j=1}^n \cup \{u_{ij}^*\}_{i,j=1}^n).$$

The operator system possesses an important universal property.

\begin{pro}
\label{univprop}
Let $(T_{ij}) \in M_n(\bofh)$ be a contraction.  Then there is a unique ucp map $\psi:\cV_n \to \bofh$ such that $\psi(u_{ij})=T_{ij}$ for all $1 \leq i,j \leq n$.  The ucp map $\psi$ dilates to a unital $*$-homomorphism $\pi:\cU_{nc}(n) \to M_2(\bofh)$ such that $\pi(u_{ij})=\begin{pmatrix} T_{ij} & (\sqrt{I-TT^*})_{ij} \\ (\sqrt{I-T^*T})_{ij} & -T_{ji}^* \end{pmatrix}$ for all $1 \leq i,j \leq n$.
\end{pro}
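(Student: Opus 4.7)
The plan is to obtain the ucp map $\psi$ as a compression of a $*$-homomorphism that comes from the Halmos unitary dilation of a contraction. Uniqueness of $\psi$ will be automatic from the fact that $\cV_n$ is spanned by $1$, the $u_{ij}$, and the $u_{ij}^*$, and a ucp (hence $*$-preserving) map is determined by its values on these generators.

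For the existence half, given the contraction $T = (T_{ij}) \in M_n(\cB(\cH))$, viewed as an operator on $\cH^n$, I would form the Halmos dilation
$$V = \begin{pmatrix} T & \sqrt{I-TT^*} \\ \sqrt{I-T^*T} & -T^* \end{pmatrix},$$
which is a standard unitary on $\cH^n \oplus \cH^n$. The key identification is the natural unitary equivalence $\cH^n \oplus \cH^n \simeq (\cH \oplus \cH)^n$ obtained by interleaving coordinates, which transports $V$ into an element $\widetilde{V} \in M_n(M_2(\cB(\cH)))$. A direct inspection of how $V$ acts on interleaved coordinates shows that the $(i,j)$ entry of $\widetilde{V}$ is precisely
$$\widetilde{V}_{ij} = \begin{pmatrix} T_{ij} & (\sqrt{I-TT^*})_{ij} \\ (\sqrt{I-T^*T})_{ij} & -T_{ji}^* \end{pmatrix},$$
where the $-T_{ji}^*$ appears because $(T^*)_{ij} = T_{ji}^*$ in the transpose-conjugate convention for matrix adjoints.

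Since $\widetilde{V}$ is unitary in $M_n(M_2(\cB(\cH)))$, the universal property of $\cU_{nc}(n)$ yields a unique unital $*$-homomorphism $\pi:\cU_{nc}(n) \to M_2(\cB(\cH))$ with $\pi(u_{ij}) = \widetilde{V}_{ij}$. Composing $\pi|_{\cV_n}$ with the ucp compression $x \mapsto V_0^* x V_0$, where $V_0:\cH \to \cH \oplus \cH$ is the isometry onto the first summand, produces a ucp map $\psi:\cV_n \to \cB(\cH)$ whose value on $u_{ij}$ is the $(1,1)$-entry of $\widetilde{V}_{ij}$, namely $T_{ij}$, as required.

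The only nontrivial point is the bookkeeping for the coordinate interchange that rewrites $V$ as an element of $M_n(M_2(\cB(\cH)))$ and the careful identification of the entries (in particular tracking the index transposition in the $-T_{ji}^*$ block). Everything else — the fact that $V$ is a unitary dilation, and that corner compressions of $*$-homomorphisms are ucp — is classical and cited routinely from \cite{paulsen02}.
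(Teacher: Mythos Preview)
Your argument is correct and follows essentially the same route as the paper: form the Halmos unitary dilation $V$ of $T$ in $M_2(M_n(\bofh))$, apply the canonical shuffle to rewrite it as a unitary in $M_n(M_2(\bofh))$, invoke the universal property of $\cU_{nc}(n)$ to obtain $\pi$, and compress to the $(1,1)$-corner to get $\psi$. Your explicit mention of uniqueness (which the paper leaves implicit) and your description of the shuffle as coordinate interleaving are minor expository differences only.
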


\begin{proof}
Let $T=(T_{ij})$; then $\|T\| \leq 1$.  The operator $V=\begin{pmatrix} T & \sqrt{I-TT^*} \\ \sqrt{I-T^*T} & -T^* \end{pmatrix}$ is unitary in $M_2(M_n(\bofh))$.  Performing a canonical shuffle (see \cite[p.~97]{paulsen02}) yields a unitary $W=(W_{ij}) \in M_n(M_2(\bofh))$ such that $$W_{ij}=\begin{pmatrix} T_{ij} & (\sqrt{I-TT^*})_{ij} \\ (\sqrt{I-T^*T})_{ij} & -T_{ji}^* \end{pmatrix}.$$
By the universal property of $\cU_{nc}(n)$, there is a unital $*$-homomorphism $\pi:\cU_{nc}(n) \to M_2(\bofh)$ with $\pi(u_{ij})=W_{ij}$ for all $i,j$.  Compressing to the $(1,1)$-corner in $M_2(\bofh)$ yields the ucp map $\psi$, as desired.
\end{proof}

We remark that $\cV_n$ is the image of a unital, completely positive map on $M_{2n}$.  Indeed, define $\varphi:M_{2n} \to V_n$ by $$\varphi(E_{ij})=\begin{cases} \frac{1}{2n} 1 \text{ if } i=j \\ \frac{1}{2n} u_{i,j-n} \text{ if } i \leq n \text{ and } j \geq n+1 \\ \frac{1}{2n}u_{j,i-n}^* \text{ if } i \geq n+1 \text{ and } j \leq n \\ 0 \text{ otherwise}. \end{cases}$$
Then the Choi matrix of $\varphi$ is $(\varphi(E_{ij}))=\begin{pmatrix} \frac{1}{2n} I & \frac{1}{2n} U \\ \frac{1}{2n} U^* & \frac{1}{2n} I \end{pmatrix}$.  Since $U^*U=I$, $(\varphi(E_{ij})) \in M_2(M_n(\cV_n))_+$, so that $\varphi$ is unital and completely positive by a theorem of Choi (see \cite[Theorem 3.14]{paulsen02}).  We will denote by $\cJ_{2n}$ the kernel of $\varphi$.  Then $\cJ_{2n}=\left\{ \begin{pmatrix} A & 0 \\ 0 & B \end{pmatrix} \in M_2(M_n): \tr(A \oplus B)=0 \right\}$.  It is readily checked that $\cJ_{2n}$ contains no positive element except $0$.  It follows by \cite[Proposition 2.4]{kavruknuclearity} that $\cJ_{2n}$ is completely order proximinal.

To simplify notation, we define for each $n \geq 2$ the index sets $\Lambda_n^+=\{(i,j) \in \{1,...,2n\}^2: i \leq n, \, j \geq n+1\}$ and $\Lambda_n^-=\{(i,j) \in \{1,...,2n\}^2: i \geq n+1, \, j \leq n\}$.  We define $\Lambda_n=\Lambda_n^+ \cup \Lambda_n^-$.  Using the notation of \cite{FP}, whenever $i,j \in \{1,...,2n\}$ are such that $\varphi(E_{ij}) \neq 0$, we define $e_{ij}=\dot{E}_{ij} \in M_{2n}/\cJ_{2n}$.  We let $q:M_{2n} \to M_{2n}/\cJ_{2n}$ be the canonical quotient map.  To show that $\cV_n$ is a complete quotient of $M_{2n}$, we need some equivalent characterizations of positivity in the quotient operator system $M_{2n}/\cJ_{2n}$.  This result and its proof are analogous to \cite[Proposition 2.3]{FP}.  The key difference is the use of the universal property of $\cV_n$ in the proof that (6) implies (5) below.

\begin{lem}
\label{positivity}
Let $A_{11},A_{ij} \in M_p$ for every $(i,j) \in \Lambda_n$.  The following are equivalent.
\begin{enumerate}
\item
$\dot{1} \otimes A_{11}+\sum_{(i,j) \in \Lambda_n} e_{ij} \otimes A_{ij}$ is positive in $(M_{2n}/\cJ_{2n}) \otimes M_p$.
\item
$\dot{1} \otimes A_{11}+\sum_{(i,j) \in \Lambda_n} \dot{\psi}(e_{ij}) \otimes A_{ij}$ is positive in $M_r \otimes M_p$ whenever $r \in \bN$ and $\dot{\psi}:M_{2n}/\cJ_{2n} \to M_r$ is ucp.
\item
$\dot{1} \otimes A_{11} + \sum_{(i,j) \in \Lambda_n} \psi(E_{ij}) \otimes A_{ij}$ is positive in $M_r \otimes M_p$ whenever $r \in \bN$ and $\psi:M_{2n} \to M_r$ is ucp with $\psi(\cJ_{2n})=\{0\}$.
\item
Whenever $B_{ij} \in M_r$ for $(i,j) \in \Lambda_n^+$ and the matrix $B=(B_{i,j+n}) \in M_n(M_r)$ are such that $$\begin{pmatrix} \frac{1}{2n} I_r & B \\ B^* & \frac{1}{2n} I_r \end{pmatrix}$$ is positive in $M_2(M_n(M_r))$, then $$I_r \otimes A_{11}+\sum_{(i,j) \in \Lambda_n^+} B_{ij} \otimes A_{ij} + \sum_{(i,j) \in \Lambda_n^-} B_{ji}^* \otimes A_{ij} \in (M_r \otimes M_p)_+.$$
\item
Whenever $C_{ij} \in M_r$ for $(i,j) \in \Lambda_n^+$ and the matrix $C=(C_{i,j+n}) \in M_n(M_r)$ is such that $$\begin{pmatrix} I_r & C \\ C^* & I_r \end{pmatrix}$$ is positive in $M_2(M_n(M_r))$, then $$I_r \otimes A_{11}+\sum_{(i,j) \in \Lambda_n^+} \frac{1}{2n} C_{ij} \otimes A_{ij}+\sum_{(i,j) \in \Lambda_n^-} \frac{1}{2n} C_{ji}^* \otimes A_{ij} \in (M_r \otimes M_p)_+.$$
\item
$I_r \otimes A_{11}+\sum_{(i,j) \in \Lambda_n^+} \frac{1}{2n} u_{i,j-n} \otimes A_{ij} + \sum_{(i,j) \in \Lambda_n^-} \frac{1}{2n} u_{j,i-n}^* \otimes A_{ij}$ is positive in $\cV_n \otimes M_p$.
\item
$I_r \otimes (2nA_{11})+\sum_{i=1}^{2n} E_{ii} \otimes B_i+\sum_{(i,j) \in \Lambda_n} E_{ij} \otimes A_{ij}$ is positive in $M_{2n} \otimes M_p$ for some matrices $B_1,...,B_{2n} \in M_p$ such that $\sum_{i=1}^{2n} B_i=(2n-4n^2)A_{11}$.
\item
$\sum_{i=1}^{2n} E_{ii} \otimes R_{ii} +\sum_{(i,j) \in \Lambda_n} E_{ij} \otimes R_{ij}$ is positive in $M_{2n} \otimes M_p$ for some matrix $R=(R_{ij}) \in (M_{2n} \otimes M_p)_+$ such that $R_{ij}=A_{ij}$ for $(i,j) \in \Lambda_n$ and $\sum_{i=1}^{2n} R_{ii}=2nA_{11}$.
\end{enumerate}
\end{lem}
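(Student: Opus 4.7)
The plan is to establish the equivalences by splicing two chains of implications: the main chain $(1) \Leftrightarrow (2) \Leftrightarrow (3) \Leftrightarrow (4) \Leftrightarrow (5) \Leftrightarrow (6)$, which runs through characterizations in terms of ucp maps, and the chain $(1) \Leftrightarrow (7) \Leftrightarrow (8)$, which expresses positivity in terms of explicit positive lifts. The unifying principle for the main chain is that, since $M_p$ is nuclear in the sense of operator systems, positivity of an element of $\cS \otimes M_p$ (for $\cS$ a finite-dimensional operator system) is detected by applying $\psi \otimes \id_{M_p}$ for all ucp maps $\psi : \cS \to M_r$ and all $r \in \bN$.

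Applying this principle with $\cS = M_{2n}/\cJ_{2n}$ yields $(1) \Leftrightarrow (2)$ directly. The equivalence $(2) \Leftrightarrow (3)$ follows from the universal property of the operator system quotient: ucp maps $\dot{\psi} : M_{2n}/\cJ_{2n} \to M_r$ are in bijection with ucp maps $\psi : M_{2n} \to M_r$ annihilating $\cJ_{2n}$, via $\psi = \dot{\psi} \circ q$. For $(3) \Leftrightarrow (4)$, I would invoke Choi's theorem, which identifies ucp maps $\psi : M_{2n} \to M_r$ with their Choi matrices $(\psi(E_{ij})) \in M_{2n}(M_r)_+$. The conditions $\psi(E_{ii} - E_{jj}) = 0$ (since $E_{ii} - E_{jj} \in \cJ_{2n}$), $\sum_i \psi(E_{ii}) = I_r$ (unitality), and $\psi(E_{ij}) = 0$ for $(i,j)$ off the diagonal within a single $n \times n$ block force the Choi matrix into the form $\begin{pmatrix} \frac{1}{2n}I_r & B \\ B^* & \frac{1}{2n}I_r \end{pmatrix}$. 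The step $(4) \Leftrightarrow (5)$ is the rescaling $C = 2nB$.

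The key conceptual step is $(5) \Leftrightarrow (6)$, and this is where the universal property of $\cV_n$ enters decisively. Applying the unifying principle with $\cS = \cV_n$, positivity in $\cV_n \otimes M_p$ is equivalent to positivity of $(\psi \otimes \id_{M_p})$ applied to the element in (6) for every ucp $\psi : \cV_n \to M_r$. By Proposition \ref{univprop}, such ucp maps are in bijective correspondence with matrices $C = (C_{ij}) \in M_n(M_r)$ satisfying $\begin{pmatrix} I_r & C \\ C^* & I_r \end{pmatrix} \geq 0$, via $\psi(u_{ij}) = C_{ij}$. Direct evaluation identifies $(\psi \otimes \id_{M_p})$ applied to (6) with the expression in (5). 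I expect the direction $(6) \Rightarrow (5)$ to be the main obstacle, since it requires Proposition \ref{univprop} to upgrade an abstract contraction $C$ into a genuine ucp map on $\cV_n$; this is the step flagged by the paper as the essential use of the universal property, distinguishing this proof from the analogue in \cite{FP}.

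Finally, $(7) \Leftrightarrow (8)$ is a direct substitution $R_{ii} = 2nA_{11} + B_i$, reading $I_r$ as $I_{2n}$ in (7); and $(1) \Leftrightarrow (8)$ uses the complete order proximinality of $\cJ_{2n}$ observed before the lemma. The direction $(8) \Rightarrow (1)$ is a direct computation of $q^{(p)}(R)$: using $\dot{E}_{ii} = \frac{1}{2n}\dot{1}$, one verifies that $q^{(p)}(R) = \dot{1} \otimes A_{11} + \sum_{(i,j) \in \Lambda_n} e_{ij} \otimes A_{ij}$. The direction $(1) \Rightarrow (8)$ uses proximinality to produce a positive preimage $Y \in M_p(M_{2n})_+$, which the linear constraints force to satisfy $Y_{ij} = A_{ij}$ for $(i,j) \in \Lambda_n$ and $\sum_i Y_{ii} = 2nA_{11}$; the remaining flexibility afforded by the off-diagonal entries lying inside $\cJ_{2n}$ can then be used to bring $Y$ into the required restricted form, or alternatively one routes the implication through the already-established equivalence with (5).
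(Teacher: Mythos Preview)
Your proposal matches the paper's argument closely. The paper runs a single cycle $(1)\Rightarrow(8)\Rightarrow(7)\Rightarrow(6)\Rightarrow(5)\Rightarrow(4)\Rightarrow(3)\Rightarrow(2)\Rightarrow(1)$ rather than your two chains meeting at $(1)$, but each individual step---the quotient universal property for $(2)\Leftrightarrow(3)$, Choi matrices for $(3)\Leftrightarrow(4)$, rescaling for $(4)\Leftrightarrow(5)$, Proposition~\ref{univprop} for $(6)\Rightarrow(5)$, and complete order proximinality for $(1)\Rightarrow(8)$---is handled by the same mechanism you describe.

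Your caution at $(1)\Rightarrow(8)$ is warranted and worth flagging. The positive lift $Y$ produced by proximinality need not have vanishing off-diagonal entries inside the two diagonal $n\times n$ blocks, and the ``stripping'' map that zeroes those entries while preserving the $\Lambda_n$-block is \emph{not} completely positive in general (for $n=2$, the rank-one matrix with rows $(1,1,1,0)$ is a counterexample). So neither of your proposed remedies---adjusting $Y$ within $\cJ_{2n}\otimes M_p$, or routing through $(5)$---actually produces a restricted-form positive lift. The paper's own proof of $(1)\Rightarrow(8)$ proceeds exactly as you outline and likewise verifies only the existence of $R\ge 0$ with $R_{ij}=A_{ij}$ on $\Lambda_n$ and $\sum_i R_{ii}=2nA_{11}$, without checking positivity of the displayed stripped matrix. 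This weak form of $(8)$ is, however, all that is invoked downstream (in the definition of property~$\fV_n$ and in the proof that $M_{2n}/\cJ_{2n}\simeq\cV_n$, which relies only on $(1)\Leftrightarrow(6)$), so your approach is on equal footing with the paper's at this point.
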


\begin{proof}
Suppose that (1) holds.  Since $\cJ_{2n}$ is completely order proximinal, there exists a matrix $R=\sum_{i,j=1}^{2n} E_{ij} \otimes R_{ij} \in (M_{2n} \otimes M_p)_+$ such that $q \otimes \id_p(R)=\dot{1} \otimes A_{11}+\sum_{(i,j) \in \Lambda_n} e_{ij} \otimes A_{ij}$.  It follows that $$\dot{1} \otimes A_{11}+\sum_{(i,j) \in \Lambda_n} e_{ij} \otimes A_{ij}=q \otimes \id_p \left( \sum_{i,j=1}^{2n} E_{ij} \otimes R_{ij} \right)=\sum_{i=1}^{2n} \frac{1}{2n}e_{ii} \otimes R_{ii}+\sum_{(i,j) \in \Lambda_n} e_{ij} \otimes R_{ij}.$$
Therefore, $R_{ij}=A_{ij}$ whenever $(i,j) \in \Lambda_n$, and $\sum_{i=1}^{2n} R_{ii}=2nA_{11}$, which shows that (8) is true. \\
Assume that (8) is true, and let $R=(R_{ij}) \in (M_{2n} \otimes M_p)_+$ be as given.  Write $R_{ii}=2nA_{11}+B_i$ where $B_i=-\sum_{j \neq i} R_{jj}$.  Then $$2nA_{11}=\sum_{i=1}^{2n} R_{ii}=4n^2 A_{11}+\sum_{i=1}^{2n} B_i.$$  
Hence, $\sum_{i=1}^{2n} B_i=(2n-4n^2)A_{11}$ and (7) follows. \\
If (7) is true, then applying $\varphi \otimes \id_p$ shows that $$1 \otimes (2n A_{11})+\sum_{i=1}^{2n} \frac{1}{2n} 1 \otimes B_i+\sum_{(i,j) \in \Lambda_n^+} u_{i,j-n} \otimes A_{ij}+\sum_{(i,j) \in \Lambda_n^-} u_{j,i-n}^* \otimes A_{ij} \in (\cV_n \otimes M_p)_+.$$
Using the fact that $\sum_{i=1}^{2n} \frac{1}{2n} 1 \otimes B_i=\frac{1}{2n} 1 \otimes \left(\sum_{i=1}^{2n} B_i \right)=(1-2n) 1 \otimes A_{11}$ shows that $$1 \otimes A_{11}+\sum_{(i,j) \in \Lambda_n^+} u_{i,j-n} \otimes A_{ij}+\sum_{(i,j) \in \Lambda_n^-} u_{j,i-n}^* \otimes A_{ij} \in (\cV_n \otimes M_p)_+.$$
Thus, (7) implies (6).

Suppose that (6) is true, and let $C_{ij} \in M_r$ for $(i,j) \in \Lambda_n^+$ be such that $\begin{pmatrix} I_r & C \\ C^* & I_r \end{pmatrix} \in (M_2(M_n(M_r)))_+$, where $C=(C_{i,j+n}) \in M_n(M_r)$.  Then $C$ is a contraction in $M_n(M_r)$.  By Proposition \ref{univprop}, there is a ucp map $\psi:\cV_n \to M_r$ such that $\psi(u_{ij})=C_{i,j+n}$ for all $1 \leq i,j \leq n$.  Applying $\psi \otimes \id_p$ to the positive element in (6) shows that $$I_r \otimes A_{11}+\sum_{(i,j) \in \Lambda_n^+} \frac{1}{2n} C_{ij} \otimes A_{ij}+\sum_{(i,j) \in \Lambda_n^-} \frac{1}{2n} C_{ji}^* \otimes A_{ij} \in (M_r \otimes M_p)_+.$$
Therefore, (5) is true.

Assume (5).  Let $B_{ij} \in M_r$ for $(i,j) \in \Lambda_n^+$ and $B=(B_{i,j+n}) \in M_n(M_r)$ be such that $\begin{pmatrix} \frac{1}{2n} I_r & B \\ B^* & \frac{1}{2n} I_r \end{pmatrix}$ is in $(M_2(M_n(M_r)))_+$.  Let $C_{ij}=2nB_{ij}$, so that $B=\frac{1}{2n}C$, where $C=(C_{i,j+n})$.  Then (5) immediately implies (4).

Suppose that (4) holds, and let $\psi:M_{2n} \to M_r$ be a ucp map such that $\psi(\cJ_{2n})=\{0\}g$.  Since $E_{ii}-E_{jj} \in \cJ_{2n}$ for all $i \neq j$, we have $\psi(E_{ii})=\frac{1}{2n} I_r$ for all $1 \leq i \leq 2n$.  If $B_{ij}=\psi(E_{ij})$ for $(i,j) \in \Lambda_n^+$, then the Choi matrix of $\psi$ is $\begin{pmatrix} \frac{1}{2n} I_r & B \\ B^* & \frac{1}{2n} I_r \end{pmatrix}$, and hence must be positive.  Then (3) follows from (4).

If (3) is true and $\dot{\psi}:M_{2n}/\cJ_{2n} \to M_r$ is ucp, then $\psi:=\dot{\psi} \circ q:M_{2n} \to M_r$ is ucp and annihilates $\cJ_{2n}$.  This shows that (2) holds.

Finally, suppose that (2) is true.  Let $h=\dot{1} \otimes A_{11}+\sum_{(i,j) \in \Lambda_n} e_{ij} \otimes A_{ij}$. Note that an element $x$ of $(M_{2n}/\cJ_{2n}) \otimes M_p$ is positive if and only if whenever $r \in \bN$ and $\gamma:(M_{2n}/\cJ_{2n}) \otimes M_p \to M_r$ is ucp, then $\gamma(x) \in (M_r)_+$ (see \cite{choieffros}).  Now, if $\gamma:(M_{2n}/\cJ_{2n}) \otimes M_p \to M_r$ is ucp, then we may find linear maps $V_1,...,V_m:\bC^p \to \bC^r \otimes \bC^p$ and ucp maps $\dot{\psi}_1,...,\dot{\psi}_m:M_{2n}/\cJ_{2n} \to M_r$ such that $$\gamma=\sum_{i=1}^m V_i^* (\dot{\psi}_i \otimes \id_p(\cdot)) V_i.$$
Applying (2), we see that $\gamma(h) \in (M_r)_+$ for each ucp map $\gamma:(M_{2n}/\cJ_{2n}) \otimes M_p \to M_r$ and for each $r \in \bN$. Thus, $h$ is positive in $(M_{2n}/\cJ_{2n}) \otimes M_p$. Therefore, (1) follows from (2), as desired.
\end{proof}

\begin{mythe}
For $\varphi:M_{2n} \to \cV_n$ and for $\cJ_{2n}$ as above, the following are true:
\begin{enumerate}
\item
The map $\varphi:M_{2n} \to \cV_n$ is a complete quotient map; i.e., $M_{2n}/\cJ_{2n}$ is completely order isomorphic to $\cV_n$.
\item
The $C^{\ast}$-envelope of $\cV_n$ is $\cU_{nc}(n)$.
\end{enumerate}
\end{mythe}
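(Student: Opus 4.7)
The plan is to derive Part (1) as an immediate consequence of Lemma \ref{positivity} and to establish Part (2) via the unique extension property (UEP) of $\cV_n$ inside $\cU_{nc}(n)$, combined with Arveson's extension theorem.

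For Part (1), I first observe that $\cJ_{2n}=\ker\varphi$ is completely order proximinal and a dimension count gives $\dim(M_{2n}/\cJ_{2n})=4n^2-(2n^2-1)=2n^2+1=\dim\cV_n$, so the induced ucp map $\dot{\varphi}:M_{2n}/\cJ_{2n}\to\cV_n$ is a bijection. To promote it to a complete order isomorphism, I note that for each $p\in\bN$, every self-adjoint element of $(M_{2n}/\cJ_{2n})\otimes M_p$ takes the form $\dot{1}\otimes A_{11}+\sum_{(i,j)\in\Lambda_n}e_{ij}\otimes A_{ij}$ with appropriate matrices $A_{ij}\in M_p$, and its image under $\dot{\varphi}\otimes\id_{M_p}$ is exactly the element appearing in statement (6) of Lemma \ref{positivity}. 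The equivalence (1)$\iff$(6) then shows that positivity is preserved in both directions, at every matrix level, yielding the complete order isomorphism.

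For Part (2), the goal is to show that the Shilov boundary ideal of $\cV_n$ inside $\cU_{nc}(n)$ is zero. The central ingredient is a UEP: any ucp map $\Phi:\cU_{nc}(n)\to\bofh$ that agrees with a faithful unital $*$-representation $\iota:\cU_{nc}(n)\hookrightarrow\bofh$ on $\cV_n$ must satisfy $\Phi=\iota$. Since $\Phi$ agrees with $\iota$ on generators, the matrix $\Phi^{(n)}(U)$ coincides with the unitary $\iota^{(n)}(U)\in M_n(\bofh)$. Applying the Schwarz inequality for the ucp map $\Phi^{(n)}$ to $U^*U=I$ (and similarly to $UU^*=I$) yields equality $\Phi^{(n)}(U^*U)=\Phi^{(n)}(U)^*\Phi^{(n)}(U)$. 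Expanding the $i$-th diagonal entry gives $\sum_k[\Phi(u_{ki}^*u_{ki})-\Phi(u_{ki})^*\Phi(u_{ki})]=0$; since each summand is nonnegative by Schwarz for $\Phi$ itself, each summand vanishes. Hence every $u_{ij}$ lies in the multiplicative domain of $\Phi$, which makes $\Phi$ multiplicative on the dense $*$-subalgebra generated by $\{u_{ij}\}$, forcing $\Phi=\iota$ by continuity.

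To finish, suppose $J\subseteq\cU_{nc}(n)$ is a closed two-sided ideal whose quotient map $q:\cU_{nc}(n)\to\cU_{nc}(n)/J$ restricts to a complete order embedding on $\cV_n$. Then $(q|_{\cV_n})^{-1}:q(\cV_n)\to\cV_n\hookrightarrow\bofh$ is a ucp map on an operator subsystem of $\cU_{nc}(n)/J$, and by Arveson's extension theorem it extends to a ucp map $\sigma:\cU_{nc}(n)/J\to\bofh$. The composition $\sigma\circ q$ is then a ucp extension of $\iota|_{\cV_n}$, so by the UEP, $\sigma\circ q=\iota$. Injectivity of $\iota$ forces $q$ to be injective, so $J=0$ and $\cU_{nc}(n)=C^{\ast}_e(\cV_n)$. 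The main obstacle is the UEP step; the key is to exploit unitarity of $U$ at the matrix level rather than scalar level, since individual entries $u_{ij}$ are merely contractions and would not on their own lie in any multiplicative domain --- it is precisely the matrix-unitarity encoded by $\cU_{nc}(n)$ that drives the argument.
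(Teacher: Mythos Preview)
Your proof of Part (1) is exactly the paper's: both invoke the equivalence (1)$\iff$(6) of Lemma \ref{positivity} to upgrade the bijection $\dot{\varphi}$ to a complete order isomorphism.

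For Part (2) your argument is correct but proceeds differently from the paper. The paper verifies the universal property of $C^{\ast}_e(\cV_n)$ directly: given any $C^{\ast}$-cover $\iota:\cV_n\hookrightarrow\cA$, it extends $\iota^{-1}$ via Arveson to a ucp map $\rho:\cA\to\bofh$, takes a minimal Stinespring dilation $\rho=V^{\ast}\pi(\cdot)V$, and observes that after a canonical shuffle $\pi^{(n)}(\iota^{(n)}(U))$ has the unitary $U$ as its $(1,1)$ block; since $\pi\circ\iota$ is completely contractive, the off-diagonal blocks vanish, forcing $\rho$ to be multiplicative on the generators $\iota(u_{ij})$ and hence a surjective $*$-homomorphism onto $\cU_{nc}(n)$. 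Your route instead establishes a unique extension property via the Kadison--Schwarz inequality and multiplicative domains, then deduces that every boundary ideal is zero. The technical cores---the paper's ``unitary in the corner kills off-diagonals'' trick versus your ``Schwarz equality on a unitary forces the entries into the multiplicative domain''---are closely related rigidity phenomena, and both hinge on the same observation you flag at the end: it is the unitarity of the full matrix $U$, not any property of individual entries, that drives the argument. The paper's approach yields the universal arrow from an arbitrary cover in one stroke, while yours isolates the UEP as a standalone fact, which is arguably cleaner and more reusable (e.g.\ it immediately identifies $\iota|_{\cV_n}$ as a boundary representation in Arveson's sense).
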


\begin{proof}
The proof is similar to the proof of \cite[Theorem 2.4]{FP}. Since $\varphi$ is a surjection, the map $\dot{\varphi}:M_{2n}/\cJ_{2n} \to \cV_n$ given by $\dot{\varphi}(\dot{x})=\varphi(x)$ is ucp and a linear bijection.  Using the fact that statements (1) and (6) are equivalent in Lemma \ref{positivity}, we see that $\dot{\varphi}$ is a complete order isomorphism, which proves the first statement.

For the second statement, we will show that $\cU_{nc}(n)$ satisfies the universal property of $C_e^{\ast}(\cV_n)$ (see, for example, \cite{GL}).  Let $\cA$ be any unital $C^*$-algebra equipped with a unital complete order embedding $\iota:\cV_n \to \cA$ such that $C^*(\iota(\cV_n))=\cA$.  We assume that $\cU_{nc}(n)$ is represented faithfully on some Hilbert space $\cH$.  The identity map $\id:\cV_n \to \cV_n \subseteq \cU_{nc}(n)$ can be written as $\id=\kappa \circ \iota$, where $\kappa:\iota(\cV_n) \to \cV_n$ is the ucp inverse of $\iota$.  We extend $\kappa$ to a ucp map $\rho:\cA \to \bofh$ by Arveson's extension theorem \cite{arveson}.  Let $\rho=V^* \pi(\cdot)V$ be a minimal Stinespring representation of $\rho$ on some Hilbert space $\cH_{\pi}=\ran(V) \oplus \ran(V)^{\perp}$.  With respect to this decomposition, for all $1 \leq i,j \leq n$, we have $$\pi(\iota(u_{ij}))=\begin{pmatrix} u_{ij} & * \\ * & * \end{pmatrix}.$$
The matrix $\pi^{(n)} \circ \iota^{(n)}(U)=(\pi \circ \iota(u_{ij}))_{i,j=1}^n$, after applying the canonical shuffle, looks like $$\begin{pmatrix} U & * \\ * & * \end{pmatrix}.$$
Since $U$ is unitary and $\pi \circ \iota$ is completely contractive, the $(1,2)$ and $(2,1)$ blocks must be $0$.  By applying the inverse shuffle, it follows that for all $i,j$, we have $$\pi(\iota(u_{ij}))=\begin{pmatrix} u_{ij} & 0 \\ 0 & * \end{pmatrix}.$$
Thus, $\rho$ is multiplicative on the generators $\{ \iota(u_{ij})\}_{i,j=1}^n$ of $\cA$, so that $\rho$ is a $*$-homomorphism with $\rho(\iota(u_{ij}))=u_{ij}$ for all $i,j$.  This shows that $\rho$ is surjective from $\cA$ onto $\cU_{nc}(n)$.  By the universal property of $C^{\ast}$-envelopes, we conclude that $C_e^{\ast}(\cV_n)=\cU_{nc}(n)$.
\end{proof}

Using the fact that $M_{2n}/\cJ_{2n} \simeq \cV_n$ allows for a description of the dual of $\cV_n$.

\begin{cor}
\label{dualofvn}
The operator system dual $\cV_n^d$ of $\cV_n$ is completely order isomorphic to $\cS_{M_n}^0$.
\end{cor}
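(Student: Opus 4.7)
The plan is to dualize the complete quotient map $\varphi:M_{2n} \to \cV_n$ established in the previous theorem. By Proposition \ref{completequotientdual}, the adjoint $\varphi^d: \cV_n^d \to M_{2n}^d$ is a complete order injection. Since $M_{2n}$ is a finite-dimensional matrix algebra, the trace pairing $(X,Y) \mapsto \tr(XY)$ induces a complete order isomorphism $M_{2n}^d \cong M_{2n}$, under which the canonical order unit of $M_{2n}^d$ (the normalized trace $\tau = \frac{1}{2n}\tr$) corresponds to the scalar matrix $\frac{1}{2n} I_{2n}$. Composing these identifications realizes $\cV_n^d$ as a concrete operator subsystem of $M_{2n}$; the image is precisely the annihilator $\cJ_{2n}^{\perp} := \{Y \in M_{2n} : \tr(YX) = 0 \text{ for all } X \in \cJ_{2n}\}$.

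Next, I would compute $\cJ_{2n}^{\perp}$ directly. Writing $Y \in M_{2n}$ in $2 \times 2$ block form as $Y = \begin{pmatrix} Y_{11} & Y_{12} \\ Y_{21} & Y_{22} \end{pmatrix}$ with entries in $M_n$, the annihilator condition reduces to $\tr(Y_{11} A) + \tr(Y_{22} B) = 0$ whenever $A, B \in M_n$ satisfy $\tr(A) + \tr(B) = 0$. Testing against $B = -A$ (for all $A$) forces $Y_{11} = Y_{22}$, and then testing against traceless $A$ with $B = 0$ forces $Y_{11} = \lambda I_n$ for some $\lambda \in \bC$. The off-diagonal blocks $Y_{12}, Y_{21}$ are unconstrained, so $\cJ_{2n}^{\perp}$ consists exactly of matrices of the form $\begin{pmatrix} \lambda I_n & Y_{12} \\ Y_{21} & \lambda I_n \end{pmatrix}$ with $\lambda \in \bC$ and $Y_{12}, Y_{21} \in M_n$. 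Writing $Y_{21} = y^*$ (the involution is a bijection on $M_n$), this is exactly $\cS_{M_n}^0 \subseteq M_2(M_n) = M_{2n}$, where $M_n$ is viewed as an operator space inside $\cB(\bC^n) = M_n$. Since the positive cones on both $\cJ_{2n}^{\perp}$ and $\cS_{M_n}^0$ are inherited by intersection with $M_{2n}^+$, they coincide as matrix orderings.

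The only remaining subtlety is that the order unit transferred to $\cJ_{2n}^{\perp}$ via the trace-duality is $\frac{1}{2n} I_{2n}$, whereas the defining Archimedean order unit of $\cS_{M_n}^0$ is $I_{2n}$. Because the positive cones agree, multiplication by the positive scalar $2n$ is a unital, bijective, completely positive map with completely positive inverse between the two operator systems, and thus yields the required complete order isomorphism $\cV_n^d \cong \cS_{M_n}^0$. The main obstacle is bookkeeping around the trace-duality identification $M_{2n}^d \cong M_{2n}$ and the rescaling of the order unit, both of which are routine for matrix algebras; the essential content is the direct computation of the annihilator $\cJ_{2n}^{\perp}$ and the recognition that it matches the defining form of $\cS_{M_n}^0$.
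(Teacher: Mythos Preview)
Your proposal is correct and follows essentially the same route as the paper: dualize the complete quotient map $\varphi$ via Proposition~\ref{completequotientdual}, identify $M_{2n}^d$ with $M_{2n}$, and recognize the image of $\cV_n^d$ as the annihilator $\cJ_{2n}^{\perp}=\cS_{M_n}^0$. Your version is more explicit than the paper's---you carry out the annihilator computation in detail and address the order-unit rescaling, both of which the paper leaves implicit---but the underlying argument is the same.
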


\begin{proof}
We use the same argument as in \cite[Proposition 2.7]{FP}. Since $\varphi:M_{2n} \to \cV_n$ is a complete quotient map, $\varphi^d:\cV_n^d \to M_{2n}^d$ is a complete order embedding by Proposition \ref{completequotientdual}.  If $\{\delta_{ij}\}_{i,j=1}^{2n}$ is the dual basis for $M_{2n}^d$ of the canonical basis $\{E_{ij}\}_{i,j=1}^{2n}$ for $M_{2n}$, then $M_{2n}$ is completely order isomorphic to $M_{2n}^d$ via the mapping $E_{ij} \mapsto \delta_{ij}$ \cite[Theorem 6.2]{PTT}.  Taking the unit of $M_{2n}^d$ to be the canonical normalized trace, this mapping is a unital complete order isomorphism.  It follows that the vector space dual of $M_{2n}/\cJ_{2n}$, equipped with the operator system structure inherited from $M_{2n}$, is the operator system dual of $\cV_n$.  It is not hard to see that the vector space dual of $M_{2n}/\cJ_{2n}$ is the annihilator of $\cJ_{2n}$ in $M_{2n}^d$.  Therefore, $\cV_n^d \simeq \cS_{M_n}^0$.
\end{proof}

We will now move towards an analogue of Kirchberg's Theorem for $\cU_{nc}(n)$.  Kirchberg's famous result on the full group $C^{\ast}$-algebra of the free group $F_n$, for $n \geq 2$, is that $C^*(F_n) \otimes_{\min} \bofh=C^*(F_n) \otimes_{\max} \bofh$ for every Hilbert space $\cH$.  We will show that a similar result is true when replacing $C^*(F_n)$ by $\cU_{nc}(n)$.

First, we adopt some terminology using Lemma \ref{positivity}.  We say that an operator system $\cS$ has \textbf{property} $\fV_n$ if whenever $p \in \bN$ and $S_{11},S_{ij} \in M_p(\cS)$ for $(i,j) \in \Lambda_n$ are such that $$1 \otimes S_{11}+\sum_{(i,j) \in \Lambda_n^+} \frac{1}{2n} u_{i,j-n} \otimes S_{ij}+\sum_{(i,j) \in \Lambda_n^-} \frac{1}{2n} u_{j,i-n}^* \otimes S_{ij} \in (\cU_{nc}(n) \otimes_{\min} M_p(\cS))_+,$$
then for each $\ee>0$ there exist $R_{ij}^{\ee} \in M_p(\cS)$ for $1 \leq i,j \leq 2n$ such that
\begin{itemize}
\item
The matrix $R_{\ee}=(R_{ij}^{\ee})$ is positive in $M_{2n}(M_p(\cS))$.
\item
$R_{ij}^{\ee}=S_{ij}$ for all $(i,j) \in \Lambda_n$.
\item
$\sum_{i=1}^{2n} R_{ii}^{\ee} =2n(S_{11}+\ee 1_{M_p(\cS)})$.
\end{itemize}
Equivalently, $\cS$ has property $\fV_n$ if and only if the above holds when replacing the above positive element of $\cU_{nc}(n) \otimes_{\min} M_p(\cS)$ with $$\dot{1} \otimes S_{11}+\sum_{(i,j) \in \Lambda_n} e_{ij} \otimes S_{ij} \in (M_{2n}/\cJ_{2n} \otimes_{\min} M_p(\cS))_+.$$
We will say that $\cS$ has property $\fV$ if it has property $\fV_n$ for every $n \in \bN$.  These properties were inspired by the similar notion of operator systems having property $\mathfrak{W}_{n+1}$ with regards to the operator system $\mathcal{W}_{n+1} \subseteq C^*(F_n)$ given by $\mathcal{W}_{n+1}=\spn \{w_iw_j^*: 1 \leq i,j \leq n+1 \}$, where $w_2,...,w_{n+1}$ are the generators of $F_n$ and $w_1=1$ (see \cite{FP}).  Lemma \ref{positivity} shows that $M_p$ has property $\fV$ for every $p \in \bN$.  The operator systems satisfying property $\fV_n$ are characterized in the following proposition.

\begin{pro}
\label{propertyv}
Let $\cS$ be an operator system.  Then $\cS$ has property $\fV_n$ if and only if $\cV_n \otimes_{\min} \cS=\cV_n \otimes_{\max} \cS$.
In particular, if $\cS$ is $(\min,\max)$-nuclear, then $\cS$ has property $\fV$.
\end{pro}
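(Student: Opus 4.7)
The plan is to reduce property $\fV_n$ to the complete quotient description of the max tensor product that comes from the identification $\cV_n \simeq M_{2n}/\cJ_{2n}$ just established. Two inputs from the KPTT tensor-product framework are central. First, injectivity of $\otimes_{\min}$: since $\cV_n \hookrightarrow \cU_{nc}(n)$ is a complete order embedding, the induced map $\cV_n \otimes_{\min} M_p(\cS) \hookrightarrow \cU_{nc}(n) \otimes_{\min} M_p(\cS)$ is also a complete order embedding, so positivity in the larger tensor product of the element displayed in the definition of $\fV_n$ is equivalent to positivity in $\cV_n \otimes_{\min} M_p(\cS)$. Second, the fact that $\max$ preserves complete quotient maps: the quotient map $q : M_{2n} \to M_{2n}/\cJ_{2n} \simeq \cV_n$ induces a complete quotient map $q \otimes \id : M_{2n} \otimes_{\max} M_p(\cS) \to \cV_n \otimes_{\max} M_p(\cS)$, and since $M_{2n}$ is nuclear, $M_{2n} \otimes_{\max} M_p(\cS) = M_{2n}(M_p(\cS))$.

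Next I would compute the image of a positive element $W = (R_{ij}) \in M_{2n}(M_p(\cS))_+$ under $q \otimes \id$. Using $\dot{E}_{ii} = \tfrac{1}{2n}\dot{1}$ and $\dot{E}_{ij} = 0$ for $i \ne j$ with $(i,j) \notin \Lambda_n$, the image simplifies to
\[
(q \otimes \id)(W) \;=\; \dot{1} \otimes \tfrac{1}{2n}\sum_{i=1}^{2n} R_{ii} \;+\; \sum_{(i,j) \in \Lambda_n} e_{ij} \otimes R_{ij}.
\]
Setting this equal to $Z + \ee\,\dot{1} \otimes 1_{M_p(\cS)}$, where $Z$ denotes the element in the definition of $\fV_n$, recovers exactly the three bullet conditions on the $R_{ij}^{\ee}$. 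Hence, by the defining characterization of the positive cone on a max-quotient, $Z$ is positive in $\cV_n \otimes_{\max} M_p(\cS)$ if and only if $\cS$ supplies the lifts required by $\fV_n$ for every $\ee > 0$.

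Both directions of the equivalence then follow directly, using the canonical identification $M_p(\cV_n \otimes_{\tau} \cS) = \cV_n \otimes_{\tau} M_p(\cS)$ for $\tau \in \{\min,\max\}$ to handle the matrix levels. If $\cV_n \otimes_{\min} \cS = \cV_n \otimes_{\max} \cS$, any displayed element that is min-positive in $\cU_{nc}(n) \otimes_{\min} M_p(\cS)$ descends to a min-positive, hence max-positive, element of $\cV_n \otimes_{\max} M_p(\cS)$, and the complete quotient characterization supplies the lifts. Conversely, given $\fV_n$, any min-positive element in $\cV_n \otimes_{\min} M_p(\cS)$ can be put in the displayed form and is min-positive in $\cU_{nc}(n) \otimes_{\min} M_p(\cS)$, so property $\fV_n$ yields the $R_{ij}^{\ee}$, and the image computation witnesses max-positivity. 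The ``in particular'' clause is then immediate, since $(\min,\max)$-nuclearity forces the hypothesis for every $n$. The only real bookkeeping to track is the correspondence between the basis $\{1, u_{ij}, u_{ij}^*\}$ of $\cV_n$ and the basis $\{\dot{1}\} \cup \{e_{ij}\}_{(i,j)\in\Lambda_n}$ of $M_{2n}/\cJ_{2n}$ together with the factors of $\tfrac{1}{2n}$; no genuinely hard step appears, since the proof is essentially a translation of the definition of $\fV_n$ through the quotient description of $\cV_n \otimes_{\max} \cS$.
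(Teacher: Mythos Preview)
Your proposal is correct and follows essentially the same route as the paper: both arguments hinge on the complete quotient map $q:M_{2n}\to \cV_n$ and the computation that $(q\otimes\id)(R_\ee)=Z+\ee(1\otimes 1)$, together with the reduction to $p=1$ via $M_p(\cV_n\otimes_\tau\cS)=\cV_n\otimes_\tau M_p(\cS)$. The only packaging difference is that you invoke the general fact from \cite{quotients} that $\otimes_{\max}$ preserves complete quotient maps, whereas the paper unwinds this by hand using the $D_1^{\max}$ factorization $x+\ee=A(V\otimes S)A^*$ and the complete order proximinality of $\cJ_{2n}$ to lift $V$ to $M_k(M_{2n})_+$.
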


\begin{proof}
We proceed as in the proof of \cite[Proposition 3.3]{FP}. Let $X=(x_{k\ell}) \in M_r(\cV_n \otimes \cS)$ for $r>1$; then $$x_{k\ell}=1 \otimes s_{11}^{(k\ell)}+\sum_{(i,j) \in \Lambda_n^+} \frac{1}{2n} u_{i,j-n} \otimes s_{ij}^{(k\ell)}+\sum_{(i,j) \in \Lambda_n^-} \frac{1}{2n} u_{j,i-n}^* \otimes s_{ij}^{(k\ell)},$$
so if $S_{11}=(s_{11}^{(k\ell)})$ and $S_{ij}=(s_{ij}^{(k\ell)})$, then we obtain $$X=1 \otimes S_{11}+\sum_{(i,j) \in \Lambda_n^+} \frac{1}{2n} u_{i,j-n} \otimes S_{ij}+\sum_{(i,j) \in \Lambda_n^-} \frac{1}{2n} u_{j,i-n}^* \otimes S_{ij},$$
where $S_{11},S_{ij} \in M_r$.  
Now, $\cS$ satisfies the definition of property $\fV_n$ when $p=r$ if and only if $M_r(\cS)$ satisfies property $\fV_n$ when $p=1$.  Moreover, we have $M_r(\cV_n \otimes_{\min} \cS)=\cV_n \otimes_{\min} M_r(\cS)$ and $M_r(\cV_n \otimes_{\max} \cS)=\cV_n \otimes_{\max} M_r(\cS)$.  By replacing $\cS$ with $M_r(\cS)$ if necessary, in order to check that $\cS$ has property $\fV_n$, it suffices to show that $\cS$ satisfies the definition of property $\fV_n$ when $p=1$.

Suppose that $\cV_n \otimes_{\min} \cS=\cV_n \otimes_{\max} \cS$, and suppose that $$x:=1 \otimes S_{11}+\sum_{(i,j) \in \Lambda_n^+} \frac{1}{2n} u_{i,j-n} \otimes S_{ij}+\sum_{(i,j) \in \Lambda_n^-} \frac{1}{2n} u_{j,i-n}^* \otimes S_{ij} \in (\cU_{nc}(n) \otimes_{\min} \cS)_+.$$
Then $x \in (\cV_n \otimes_{\max} \cS)_+$.  Hence, for every $\ee>0$, $x+\ee(1 \otimes 1_{\cS}) \in D_1^{\max}(\cV_n,\cS)$.  This means that there is $V \in M_k(\cV_n)_+$, $S \in M_m(\cS)_+$ and a linear map $A:\bC^k \otimes \bC^m \to \bC$ such that $$x+\ee (1 \otimes 1_{\cS})=A(V \otimes S)A^*.$$
Since $\varphi:M_{2n} \to \cV_n$ is a complete quotient map, there is $R \in M_k(M_{2n})_+$ such that $$x+\ee (1 \otimes 1_{\cS})=A (\varphi(R) \otimes S)A^*.$$
So, with $R_{\ee}=A(R \otimes S)A^* \in M_{2n}(\cS)_+$, we have $\varphi \otimes \id_{\cS}(R_{\ee})=x+\ee(1 \otimes 1_{\cS})$.  That is to say, for each $\ee>0$, there is $R_{\ee} \in M_{2n}(\cS)_+$ such that $$x+\ee(1 \otimes 1_{\cS})=\sum_{i=1}^{2n} 1 \otimes R_{ii}^{\ee}+\sum_{(i,j) \in \Lambda_n^+} \frac{1}{2n} u_{i,j-n} \otimes R_{ij}^{\ee}+\sum_{(i,j) \in \Lambda_n^-} \frac{1}{2n} u_{j,i-n}^* \otimes R_{ij}^{\ee}.$$
Comparing coefficients with the coefficients of $x+\ee(1 \otimes 1_{\cS})$ shows that $R_{ij}^{\ee}=S_{ij}$ for $(i,j) \in \Lambda_n$ and $\frac{1}{2n} \sum_{i=1}^{2n} R_{ii}^{\ee}=S_{11}+\ee 1$.  This shows that $\cS$ has property $\fV_n$.

Conversely, suppose that $\cS$ has property $\fV_n$ and let $p \in \bN$; we must show that $\mathcal{C}_p^{\min}(\cV_n,\cS) \subseteq \mathcal{C}_p^{\max}(\cV_n,\cS)$.  As before, by replacing $\cS$ with $M_r(\cS)$ if necessary, we may assume that $p=1$.  Let $x \in (\cV_n \otimes_{\min} \cS)_+$.  Then there are $s_{11},s_{ij} \in \cS$ for $(i,j) \in \Lambda_n$ such that $$x=1 \otimes s_{11}+\sum_{(i,j) \in \Lambda_n^+} \frac{1}{2n} u_{i,j-n} \otimes s_{ij}+\sum_{(i,j) \in \Lambda_n^-} \frac{1}{2n} u_{j,i-n}^* \otimes s_{ij}.$$
Since $\cS$ has property $\fV_n$, given $\ee>0$, there are $R_{ij}^{\ee} \in \cS$ for $1 \leq i,j \leq 2n$ such that $R_{\ee}=(R_{ij}^{\ee}) \in M_n(\cS)_+$, $R_{ij}^{\ee}=s_{ij}$ for all $(i,j) \in \Lambda_n$ and $\sum_{i=1}^{2n} R_{ii}^{\ee}=2n(s_{11}+\ee 1_{\cS})$.  If $\varphi:M_{2n} \to \cV_n$ is the complete quotient map given as before, then the map $\varphi \otimes \id_{\cS}:M_{2n} \otimes_{\max} \cS \to \cV_n \otimes_{\max} \cS$ is ucp, and $$\varphi \otimes \id_{\cS}(R_{\ee})=x+\ee(1 \otimes 1_{\cS}) \in (\cV_n \otimes_{\max} \cS)_+.$$
Thus, $x+\ee(1 \otimes 1_{\cS}) \in \cD_1^{\max}(\cV_n,\cS)$ for all $\ee>0$.  Therefore, $x \in (\cV_n \otimes_{\max} \cS)_+$, which completes the proof.
\end{proof}

The next fact about tensor products of $\cV_n$ is very useful.

\begin{pro}
\label{vncommutingorderembedding}
Let $\cS$ be any operator system.  For all $n \geq 2$, the inclusion $\cV_n \otimes_c \cS \subseteq \cU_{nc}(n) \otimes_{\max} \cS$ is a complete order embedding.
\end{pro}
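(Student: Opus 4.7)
The plan is to establish the statement by verifying both directions at each matrix level: first that the inclusion is ucp, and second that every element of $M_p(\cV_n \otimes \cS)_h$ that is positive in $\cU_{nc}(n) \otimes_{\max} \cS$ is already positive in $\cV_n \otimes_c \cS$. The ucp direction is easy. The inclusion $\cV_n \hookrightarrow \cU_{nc}(n)$ is ucp, so by functoriality of $c$ it induces a ucp map $\cV_n \otimes_c \cS \to \cU_{nc}(n) \otimes_c \cS$; and since $\cU_{nc}(n)$ is a unital $C^{\ast}$-algebra, $\cU_{nc}(n) \otimes_c \cS = \cU_{nc}(n) \otimes_{\max} \cS$ by \cite[Theorem~6.7]{KPTT}.

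For the reverse direction, fix $X \in M_p(\cV_n \otimes \cS)_h$ positive in $\cU_{nc}(n) \otimes_{\max} \cS$, and let $\varphi: \cV_n \to \bofh$, $\psi: \cS \to \bofh$ be arbitrary ucp maps with commuting ranges. The key idea is to dilate $\varphi$ to a $\ast$-homomorphism $\pi$ in a commutant-respecting way. Set $\cA = \psi(\cS)' \subseteq \bofh$; since $\varphi$ and $\psi$ have commuting ranges, $\varphi(\cV_n) \subseteq \cA$, and $\cA$ is a von Neumann algebra. Let $T = (\varphi(u_{ij})) \in M_n(\cA)$. Applying Proposition~\ref{univprop} yields a unital $\ast$-homomorphism $\pi: \cU_{nc}(n) \to M_2(\bofh)$; because $\cA$ is closed under Borel functional calculus, the positive square roots $\sqrt{I-TT^*}$ and $\sqrt{I-T^*T}$ lie in $M_n(\cA)$, so in fact $\pi(\cU_{nc}(n)) \subseteq M_2(\cA)$.

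Next, define $\tilde{\psi}: \cS \to M_2(\bofh)$ by $\tilde{\psi}(s) = I_2 \otimes \psi(s)$. This is ucp, and its range sits inside $I_2 \otimes \cA' = (M_2(\cA))'$, so $\pi$ and $\tilde{\psi}$ have commuting ranges. Invoking again the equality of $c$ and $\max$ for the $C^{\ast}$-algebra $\cU_{nc}(n)$, the pair $(\pi, \tilde{\psi})$ induces a ucp map $\pi \cdot \tilde{\psi}: \cU_{nc}(n) \otimes_{\max} \cS \to M_2(\bofh)$. Hence $(\pi \cdot \tilde{\psi})^{(p)}(X)$ is positive in $M_p(M_2(\bofh))$, and compressing to the top-left $\bofh$-corner preserves positivity. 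From the explicit formula in Proposition~\ref{univprop}, the $(1,1)$-block of $\pi(a)$ is exactly $\varphi(a)$ for every $a \in \cV_n$ (checked on $1$, $u_{ij}$, $u_{ij}^*$ and extended by linearity), while $\tilde{\psi}(s)$ is block-diagonal with $\psi(s)$ in each diagonal entry. A straightforward computation shows this compression equals $(\varphi \cdot \psi)^{(p)}(X)$, which is therefore positive. Since $\varphi$ and $\psi$ were arbitrary, $X \in M_p(\cV_n \otimes_c \cS)_+$.

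The main obstacle is arranging the Stinespring-type dilation of $\varphi$ so that $\pi(\cU_{nc}(n))$ lands inside the relative commutant of $\psi(\cS)$; a naive Arveson extension followed by Stinespring would not preserve the commuting-ranges condition, and then the universal property of $\otimes_{\max}$ could not be invoked. Proposition~\ref{univprop} is tailor-made to supply such a dilation through the specific $2\times 2$ unitary construction, and combining it with the von Neumann algebra structure of $\psi(\cS)'$ keeps the image of $\pi$ where we need it. Everything else is amplification and compression bookkeeping.
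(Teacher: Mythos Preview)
Your proof is correct and follows essentially the same approach as the paper: both dilate the $\cV_n$-map to a $*$-homomorphism on $\cU_{nc}(n)$ via Proposition~\ref{univprop}, inflate the $\cS$-map to a block-diagonal map into $M_2(\bofh)$, verify that the dilated maps still have commuting ranges (you via the von Neumann algebra $\cA=\psi(\cS)'$, the paper via $\gamma(s)\otimes I$ commuting with $C^*(I,T,T^*)$), and then compress to the $(1,1)$-corner. The only difference is cosmetic packaging of the commutant argument.
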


\begin{proof}
By \cite[Theorem 6.7]{KPTT}, $\cA \otimes_c \cS=\cA \otimes_{\max} \cS$ for every unital $C^{\ast}$-algebra $\cA$.  We must show that $C_p^{\text{comm}}(\cV_n,\cS)=C_p^{\text{comm}}(\cU_{nc}(n),\cS) \cap M_p(\cV_n \otimes \cS)$ for each $p \in \bN$.  The inclusion map $\iota_n:\cV_n \to \cU_{nc}(n)$ is ucp, so by functoriality of the commuting tensor product, $\iota_n \otimes \id_{\cS}:\cV_n \otimes_c \cS \to \cU_{nc}(n) \otimes_{\max} \cS$ is ucp.  Therefore, $C_p^{\text{comm}}(\cV_n,\cS) \subseteq C_p^{\text{comm}}(\cU_{nc}(n),\cS) \cap M_p(\cV_n \otimes \cS)$ for each $p$.

Conversely, suppose that $X \in C_p^{\text{comm}}(\cU_{nc}(n),\cS) \cap M_p(\cV_n \otimes \cS)$.  Let $\psi:\cV_n \to \bofh$ and $\gamma:\cS \to \bofh$ be ucp maps with commuting ranges.  Let $T=(T_{ij})$ where $T_{ij}=\psi(u_{ij})$.  By Proposition \ref{univprop}, the map $\psi$ dilates to a unital $*$-homomorphism $\pi:\cU_{nc}(n) \to M_2(\bofh)$ such that, for all $1 \leq i,j \leq n$, $$\pi(u_{ij})=\begin{pmatrix} T_{ij} & (\sqrt{I-TT^*})_{ij} \\ (\sqrt{I-T^*T})_{ij} & -T_{ji}^* \end{pmatrix}.$$
We extend $\psi$ to a ucp map on all of $\cU_{nc}(n)$ by letting $\psi(x)$ be given by the $(1,1)$ corner of $\pi(x)$, for each $x \in \cU_{nc}(n)$.  Define $\widetilde{\gamma}:\cS \to M_2(\bofh)$ by setting $$\widetilde{\gamma}(s)=\begin{pmatrix} \gamma(s) & 0 \\ 0 & \gamma(s) \end{pmatrix}.$$
Since $\gamma(s)$ commutes with each $T_{ij}$ and $T_{ij}^*$, we see that $\gamma(s) \otimes I_{\cH}$ commutes with $T$ and $T^*$.  Hence, $\gamma(s) \otimes I_{\cH}$ commutes with $C^*(I_{\cH},T,T^*)$, which contains $\sqrt{I-T^*T}$ and $\sqrt{I-TT^*}$.  Thus, $\gamma(s) \otimes I_{\cH}$ commutes with each block of $\pi(u_{ij})$.  It follows that the range of $\widetilde{\gamma}$ commutes with each $\pi(u_{ij})$.  Since $\pi$ is a $*$-homomorphism, the ucp maps $\pi$ and $\widetilde{\gamma}$ must have commuting ranges.  Therefore, the map $\pi \cdot \widetilde{\gamma}:\cU_{nc}(n) \otimes_c \cS \to M_2(\bofh)$ is ucp.  Compressing to the $(1,1)$ corner in $M_2(\bofh)$ yields the map $\psi \cdot \gamma$.  It follows that $(\psi \cdot \gamma)_{|\cV_n \otimes \cS}$ is ucp on the inclusion of $\cV_n \otimes \cS$ into $\cU_{nc}(n) \otimes_c \cS$.  Hence, $(\psi \cdot \gamma)^{(p)}(X) \in M_p(\bofh)_+$.  As $\psi$ and $\gamma$ were arbitrary, we conclude that $X \in C_p^{\text{comm}}(\cV_n,\cS)$.
\end{proof}

\begin{lem}
\label{vnmincunc}
Let $\cS$ be any operator system.  Then $\cV_n \otimes_{\min} \cS=\cV_n \otimes_c \cS$ if and only if $\cU_{nc}(n) \otimes_{\min} \cS=\cU_{nc}(n) \otimes_{\max} \cS$.  In particular, if $\cS$ has property $\fV_n$, then $\cU_{nc}(n) \otimes_{\min} \cS=\cU_{nc}(n) \otimes_{\max} \cS$.
\end{lem}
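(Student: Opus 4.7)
The plan is to handle the reverse implication and the ``in particular'' statement quickly, and then focus on the forward implication. For the reverse direction, suppose $\cU_{nc}(n) \otimes_{\min} \cS = \cU_{nc}(n) \otimes_{\max} \cS$. Injectivity of the minimal tensor product gives a complete order embedding $\cV_n \otimes_{\min} \cS \hookrightarrow \cU_{nc}(n) \otimes_{\min} \cS$, while Proposition \ref{vncommutingorderembedding} gives a complete order embedding $\cV_n \otimes_c \cS \hookrightarrow \cU_{nc}(n) \otimes_{\max} \cS$. Since the ambient operator systems coincide by hypothesis, the two induced structures on the common subspace $\cV_n \otimes \cS$ must agree, yielding $\cV_n \otimes_{\min} \cS = \cV_n \otimes_c \cS$. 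The ``in particular'' statement follows from Proposition \ref{propertyv}: property $\fV_n$ is equivalent to $\cV_n \otimes_{\min} \cS = \cV_n \otimes_{\max} \cS$, which forces $\cV_n \otimes_{\min} \cS = \cV_n \otimes_c \cS$ (via $\min \leq c \leq \max$), so the forward direction applies.

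For the forward direction, assume $\cV_n \otimes_{\min} \cS = \cV_n \otimes_c \cS$. Since $\cU_{nc}(n)$ is a $C^{\ast}$-algebra, $\cU_{nc}(n) \otimes_c \cS = \cU_{nc}(n) \otimes_{\max} \cS$ by \cite[Theorem 6.7]{KPTT}, so it suffices to show $\cU_{nc}(n) \otimes_{\min} \cS = \cU_{nc}(n) \otimes_c \cS$. Fix $X \in M_p(\cU_{nc}(n) \otimes \cS)$ positive in $\otimes_{\min}$ and a commuting pair of ucp maps $\psi:\cU_{nc}(n) \to \bofh$, $\gamma:\cS \to \bofh$; the goal is to show that $(\psi \cdot \gamma)^{(p)}(X) \geq 0$. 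I first dilate via a commuting Stinespring construction: take the minimal Stinespring dilation $(\pi, V)$ of $\psi$, where $\pi:\cU_{nc}(n) \to \cB(\cK)$ is a $\ast$-homomorphism and $V:\cH \to \cK$ is an isometry, then define $\tilde\gamma(s)$ on the dense subspace $\pi(\cU_{nc}(n))V\cH$ by $\tilde\gamma(s)(\pi(a)Vh) = \pi(a)V\gamma(s)h$. The commutation of $\psi$ and $\gamma$ on $\cH$ (via a standard Gram-matrix computation) makes $\tilde\gamma(s)$ well-defined and bounded, and $\tilde\gamma$ extends to a ucp map on $\cK$ that commutes with $\pi(\cU_{nc}(n))$ and satisfies $\psi \cdot \gamma = V^{\ast}(\pi \cdot \tilde\gamma)V$.

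Next, $(\pi|_{\cV_n}) \cdot \tilde\gamma:\cV_n \otimes_c \cS \to \cB(\cK)$ is ucp, hence ucp on $\cV_n \otimes_{\min} \cS$ by hypothesis. Using injectivity of $\otimes_{\min}$ and Arveson's extension theorem, extend it to a ucp map $\Phi:\cU_{nc}(n) \otimes_{\min} \cS \to \cB(\cK)$. The crucial step is to show $\Phi = \pi \cdot \tilde\gamma$ on the algebraic tensor product $\cU_{nc}(n) \otimes \cS$. Since $\Phi(u_{ij} \otimes 1) = \pi(u_{ij})$, the matrix $\Phi^{(n)}(U \otimes 1) = \pi^{(n)}(U)$ is unitary in $M_n(\cB(\cK))$; the column-sum identity $\sum_k \Phi((u_{ki} \otimes 1)^{\ast}(u_{ki} \otimes 1)) = I = \sum_k \Phi(u_{ki} \otimes 1)^{\ast}\Phi(u_{ki} \otimes 1)$, combined with termwise Kadison-Schwarz (each defect is positive and they sum to zero), forces each $u_{ij} \otimes 1$ into the multiplicative domain of $\Phi$. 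Since $\cV_n$ generates $\cU_{nc}(n)$ as a $C^{\ast}$-algebra, multiplicativity and norm continuity give $\Phi(u \otimes 1) = \pi(u)$ for all $u \in \cU_{nc}(n)$, and then $\Phi(u \otimes s) = \Phi((u \otimes 1)(1 \otimes s)) = \pi(u)\tilde\gamma(s) = (\pi \cdot \tilde\gamma)(u \otimes s)$. Consequently $(\pi \cdot \tilde\gamma)^{(p)}(X) = \Phi^{(p)}(X) \geq 0$, and compressing by $V^{(p)}$ yields $(\psi \cdot \gamma)^{(p)}(X) = V^{(p)\ast}\Phi^{(p)}(X)V^{(p)} \geq 0$, as required. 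The main obstacle is precisely this multiplicative-domain step: one must leverage the matrix-unitarity of the generator matrix $(u_{ij})$ of $\cU_{nc}(n)$ to force the a priori non-unique Arveson extension $\Phi$ to coincide with $\pi \cdot \tilde\gamma$ outside the subsystem $\cV_n \otimes \cS$, where the hypothesis is directly available.
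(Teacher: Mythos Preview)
Your proof is correct and follows essentially the same architecture as the paper's: the reverse implication via injectivity of $\min$ together with Proposition~\ref{vncommutingorderembedding}; the forward implication via minimal Stinespring dilation of $\psi$, a commutant lifting to produce $\tilde\gamma$, Arveson extension of $(\pi|_{\cV_n})\cdot\tilde\gamma$, and a multiplicative-domain argument using that $(u_{ij}\otimes 1)$ is a unitary matrix. The paper cites Arveson's commutant lifting theorem \cite[Theorem 1.3.1]{arveson} directly to obtain $\tilde\gamma=\rho\circ\gamma$, whereas you reconstruct $\tilde\gamma$ by hand on the dense subspace $\pi(\cU_{nc}(n))V\cH$; these are the same map.

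One small technical point: you extend Arveson to the operator system $\cU_{nc}(n)\otimes_{\min}\cS$, while the paper extends to the $C^{\ast}$-algebra $\cU_{nc}(n)\otimes_{\min} C_e^{\ast}(\cS)$. The bimodule property of the multiplicative domain (that $a\in\cM_\Phi$ implies $\Phi(ab)=\Phi(a)\Phi(b)$ for \emph{all} $b$) is standardly stated for ucp maps on $C^{\ast}$-algebras, so the paper's choice makes the step $\Phi(u\otimes s)=\Phi(u\otimes 1)\Phi(1\otimes s)$ immediate. Your version still works, since a further Arveson extension to $\cU_{nc}(n)\otimes_{\min} C_e^{\ast}(\cS)$ (or a direct Stinespring argument) recovers the bimodule property and all the products you use, namely $u\otimes s=(u\otimes 1)(1\otimes s)$, already lie in $\cU_{nc}(n)\otimes_{\min}\cS$; but it is worth making that passage explicit.
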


\begin{proof}
Suppose that $\cU_{nc}(n) \otimes_{\min} \cS=\cU_{nc}(n) \otimes_{\max} \cS$.  Since the $\min$ tensor product is injective, $\cV_n \otimes_{\min} \cS$ is completely order isomorphic to the image of $\cV_n \otimes \cS$ in $\cU_{nc}(n) \otimes_{\min} \cS$.  By Proposition \ref{vncommutingorderembedding}, $\cV_n \otimes_c \cS$ is completely order isomorphic to its image in $\cU_{nc}(n) \otimes_{\max} \cS$.  It follows that $\cV_n \otimes_{\min} \cS=\cV_n \otimes_c \cS$.

Conversely, suppose that $\cV_n \otimes_{\min} \cS=\cV_n \otimes_c \cS$.  We employ an argument analogous to the proof of \cite[Proposition 3.6]{FP}. Let $X \in M_p(\cU_{nc}(n) \otimes_{\min} \cS)_+$, and let $\psi:\cU_{nc}(n) \to \bofh$ and $\gamma:\cS \to \bofh$ be ucp maps with commuting ranges.  Let $\psi=V^*\pi(\cdot)V$ be a minimal Stinespring representation for $\psi$ on some Hilbert space $\cH_{\pi}$.  By Arveson's commutant lifting theorem \cite[Theorem 1.3.1]{arveson}, there is a unital $*$-homomorphism $\rho:(\psi(\cU_{nc}(n)))' \to (\pi(\cU_{nc}(n)))'$ such that $\rho(T)V=VT$ for all $T \in (\psi(\cU_{nc}(n)))'$. Since $\psi$ and $\gamma$ have commuting ranges, we see that $\widetilde{\gamma}=\rho \circ \gamma:\cS \to (\pi(\cU_{nc}(n)))' \subseteq \cB(\cK)$ is ucp and its range commutes with the range of $\pi$.  Since $\cV_n \otimes_{\min} \cS=\cV_n \otimes_c \cS$, the map $(\pi \cdot \widetilde{\gamma})_{|\cV_n \otimes_{\min} \cS}$ is ucp.

Since the min tensor product is injective, $\cV_n \otimes_{\min} \cS$ is completely order isomorphic to the image of $\cV_n \otimes \cS$ in $\cU_{nc}(n) \otimes_{\min} C_e^*(\cS)$.  Arveson's extension theorem \cite{arveson} guarantees existence of a ucp extension $\eta: \cU_{nc}(n) \otimes_{\min} C_e^*(\cS) \to \cB(\cK)$ of $(\pi \cdot \widetilde{\gamma})_{|\cV_n \otimes_{\min} \cS}$.  For any $1 \leq i,j \leq n$, we see that $$\eta(u_{ij} \otimes 1)=\pi\cdot \widetilde{\gamma}(u_{ij} \otimes 1)=\pi(u_{ij}).$$
Thus, $\{ u_{ij}: 1 \leq i,j \leq n \}$ is in the multiplicative domain $\cM_{\eta}$ of $\eta$.  It follows that $\cU_{nc}(n) \otimes 1 \subseteq \cM_{\eta}$.  Hence whenever $a \in \cU_{nc}(n)$ and $s \in \cS$, we obtain $$\eta(a \otimes s)=\eta(\underbrace{(a \otimes 1)}_{\in \cM_{\eta}}(1 \otimes s))=\eta(a \otimes 1)\eta(1 \otimes s)=\pi(a)\widetilde{\gamma}(s).$$
Now, the upper-left corner of $\pi(a)\widetilde{\gamma}(s)$ is $$V^*\pi(a)\widetilde{\gamma}(s)V=V^*\pi(a)\rho(\gamma(s))V=V^*\pi(a)V\gamma(s)=\psi(a)\gamma(s).$$
Using this fact, we have $$\psi \cdot \gamma(a \otimes s)=\psi(a)\gamma(s)=V^*\pi(a)\widetilde{\gamma}(s)V=V^*\eta(a \otimes s)V.$$
So, for all $z \in \cU_{nc}(n) \otimes_{\min} \cS$ we have $\psi \cdot \gamma(z)=V^*\eta(z)V$, so that $(\psi \cdot \gamma)_{|\cU_{nc}(n) \otimes_{\min} \cS}$ is ucp.  Therefore, $(\psi \cdot \gamma)^{(n)}(X) \in M_p(M_m)_+$ so that $\cU_{nc}(n) \otimes_{\min} \cS=\cU_{nc}(n) \otimes_c \cS$. \\
\end{proof}

\begin{lem}
\label{bofhpropertyv}
If $\cH$ is any Hilbert space, then $\bofh$ has property $\fV$.  Equivalently, $\cV_n$ has the OSLLP for every $n \geq 2$.
\end{lem}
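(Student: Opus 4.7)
The plan is to prove the equivalent formulation and then reduce everything to a classical $C^{\ast}$-algebra lifting fact. First, I would note that the equivalence ``$\mathcal{B}(\mathcal{H})$ has property $\mathfrak{V}$ for every $\mathcal{H}$" $\Leftrightarrow$ ``$\mathcal{V}_n$ has the OSLLP for every $n\ge 2$" is immediate: by Proposition \ref{propertyv}, $\mathcal{B}(\mathcal{H})$ having property $\mathfrak{V}_n$ is the same as $\mathcal{V}_n \otimes_{\min} \mathcal{B}(\mathcal{H}) = \mathcal{V}_n \otimes_{\max} \mathcal{B}(\mathcal{H})$, and by Theorem \ref{osllp}(3) this holds for every $\mathcal{H}$ precisely when $\mathcal{V}_n$ has the OSLLP. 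So it suffices to prove that $\mathcal{V}_n$ has the OSLLP.

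Next, since $\mathcal{V}_n$ is finite-dimensional, the OSLLP reduces to showing that for every unital $C^{\ast}$-algebra $\mathcal{A}$, every two-sided ideal $\mathcal{I}\subseteq\mathcal{A}$, and every ucp map $\psi:\mathcal{V}_n\to\mathcal{A}/\mathcal{I}$, there exists a ucp lift $\widetilde{\psi}:\mathcal{V}_n\to\mathcal{A}$ with $\pi\circ\widetilde{\psi}=\psi$, where $\pi:\mathcal{A}\to\mathcal{A}/\mathcal{I}$ is the quotient map. By Proposition \ref{univprop}, specifying a ucp map $\mathcal{V}_n\to\mathcal{B}$ (for a unital $C^{\ast}$-algebra $\mathcal{B}$, via a faithful representation on a Hilbert space) is equivalent to specifying a contraction $T=(T_{ij})\in M_n(\mathcal{B})$, where $T_{ij}$ is the image of $u_{ij}$. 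Thus, letting $\overline{T}=(\psi(u_{ij}))\in M_n(\mathcal{A}/\mathcal{I})$, which is a contraction, the problem reduces to lifting $\overline{T}$ to a contraction $T\in M_n(\mathcal{A})$, since any such contraction lift will, by Proposition \ref{univprop}, determine a ucp map $\widetilde{\psi}:\mathcal{V}_n\to\mathcal{A}$ with $\widetilde{\psi}(u_{ij})=T_{ij}$, and hence $\pi\circ\widetilde{\psi}=\psi$.

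The remaining step is the classical fact that the closed unit ball of the $C^{\ast}$-algebra quotient $M_n(\mathcal{A})/M_n(\mathcal{I})\cong M_n(\mathcal{A}/\mathcal{I})$ lifts to the closed unit ball of $M_n(\mathcal{A})$. Concretely, I would pick an arbitrary lift $T_0\in M_n(\mathcal{A})$ of $\overline{T}$, form the positive operator $P=T_0^*T_0\in M_n(\mathcal{A})$, and consider the continuous function $f:[0,\infty)\to[0,\infty)$ defined by $f(t)=1/\max(1,\sqrt{t})$. Setting $T:=T_0\, f(P)$ and computing $T^*T=f(P)\,P\,f(P)$, the spectral bound $\lambda f(\lambda)^2\le 1$ on $[0,\infty)$ gives $\|T\|\le 1$. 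Moreover, the image of $P$ in the quotient is $\overline{T}^*\overline{T}$, whose norm is at most $1$, so by continuous functional calculus $\pi(f(P))=f(\overline{T}^*\overline{T})=1$, forcing $\pi(T)=\overline{T_0}\cdot 1=\overline{T}$. This produces the desired contraction lift, and hence the ucp lift $\widetilde{\psi}$.

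The main (and really only nontrivial) ingredient is the contraction-lifting step; everything else is assembling the definitions with Proposition \ref{propertyv}, Theorem \ref{osllp}, and Proposition \ref{univprop}. Once those are in place, the functional calculus argument is short, and no properties of $\mathcal{U}_{nc}(n)$ beyond the universal property already established are needed.
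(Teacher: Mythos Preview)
Your proof is correct and takes a genuinely different route from the paper. The paper proves property $\mathfrak{V}_n$ for $\mathcal{B}(\mathcal{H})$ directly: it first reduces to the case $p=1$, then uses that $M_p$ has property $\mathfrak{V}$ (from Lemma \ref{positivity}) to solve the lifting problem on each finite-dimensional compression $P_{\mathcal{M}}\mathcal{B}(\mathcal{H})P_{\mathcal{M}}$, obtains a uniformly bounded net of positive matrices $R_{\varepsilon,\mathcal{M}}$, and takes a $w^*$-cluster point to produce the required $R_\varepsilon \in M_{2n}(\mathcal{B}(\mathcal{H}))_+$. This is modelled on the approach in \cite{FP}.

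Your argument bypasses the $w^*$-limit entirely by going through the OSLLP characterization directly. The key observation is that, by Proposition \ref{univprop}, ucp maps from $\mathcal{V}_n$ into a unital $C^*$-algebra $\mathcal{B}$ are exactly parametrized by contractions in $M_n(\mathcal{B})$; this turns the OSLLP for $\mathcal{V}_n$ into the classical fact that contractions lift through $C^*$-algebra quotients, which you dispatch with a short functional calculus computation. This is cleaner and more conceptual: it isolates the single $C^*$-algebraic ingredient (contraction lifting) and shows that $\mathcal{V}_n$ in fact has the full lifting property, not merely the local one. The paper's approach, on the other hand, stays closer to the tensor product picture and makes the role of Lemma \ref{positivity} explicit, which fits the surrounding narrative built around property $\mathfrak{V}_n$.
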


\begin{proof}
The proof proceeds in a similar manner to the proof of \cite[Proposition 3.5]{FP}. Let $n \in \bN$ and let $S_{11},S_{ij} \in \bofh$ for $(i,j) \in \Lambda_n$.  Suppose that $$1 \otimes S_{11}+\sum_{(i,j) \in \Lambda_n^+} \frac{1}{2n} u_{i,j-n} \otimes S_{ij}+\sum_{(i,j) \in \Lambda_n^-} \frac{1}{2n} u_{j,i-n}^* \otimes S_{ij} \in (\cU_{nc}(n) \otimes_{\min} \bofh)_+.$$
As in the proof of Proposition \ref{propertyv}, we may assume that $p=1$.  The matrix $0 \in M_n$ is a contraction, so by Proposition \ref{univprop}, the map $\alpha:\cV_n \to \bC$ given by $\alpha(u_{ij})=0$ and $\alpha(1)=1$ extends to a ucp map on $\cU_{nc}(n)$.  Hence, $\alpha \otimes \id_{\bofh}$ is ucp on $\cU_{nc}(n) \otimes_{\min} \bofh$, which forces $S_{11} \geq 0$.  Fix $\ee>0$.  For any finite-dimensional subspace $\cM$ of $\cH$, we know that $\cB(\cM)$ has property $\fV$.  Let $P_{\cM}$ denote the orthogonal projection onto $\cM$.  Replacing $S_{11}$ with $P_{\cM} S_{11} P_{\cM}$ and $S_{ij}$ with $P_{\cM} S_{ij} P_{\cM}$, we may find $R_{ij}^{\ee,\cM} \in \cB(\cM)$ such that

\begin{itemize}
\item
$R_{\ee,\cM}:=(R_{ij}^{\ee,\cM})$ is in $(M_{2n}(\cB(\cM)))_+$,
\item
$R_{ij}^{\ee,\cM}=P_{\cM}S_{ij}P_{\cM}$ for $(i,j) \in \Lambda_n$, and
\item
$\sum_{i=1}^{2n} R_{ii}^{\ee,\cM}=2n(P_{\cM} S_{11} P_{\cM}+\ee I_{\cM})=2n(P_{\cM}S_{11}P_{\cM}+\ee P_{\cM})$.
\end{itemize}

Clearly $\sum_{i=1}^{2n} R_{ii}^{\ee,\cM} \leq 2n(S_{11}+\ee I_{\cH})$, so since each $R_{ii}^{\ee,\cM}$ is positive, the diagonal blocks of $R_{\ee,\cM}$ are bounded.  Since $\cM$ is finite-dimensional and $R_{\ee,\cM} \geq 0$, the norm of $R_{\ee,\cM}$ is given by the largest eigenvalue.  Therefore, indexing finite-dimensional subspaces of $\cH$ by inclusion, the net $(R_{\ee,\cM})_{\cM \leq \cH, \, \dim(\cM)<\infty}$ is uniformly bounded.  Let $R_{\ee}$ be a $w^*$-limit point of the net $(R_{\ee,\cM})_{\cM}$.  Then the corresponding subnet of $(P_{\cM})_{\cM}$ converges strongly to $I_{\cH}$.  It follows that if $R_{\ee}=(R_{ij}^{\ee}) \in M_n(\bofh)$, then $R_{\ee} \geq 0$, while $R_{ij}^{\ee}=S_{ij}$ for all $(i,j) \in \Lambda_n$ and $\sum_{i=1}^{2n} R_{ii}^{\ee}=2n(S_{11}+\ee I_{\cH})$.  Therefore, $\bofh$ has property $\cV_n$ for every $n \in \bN$.
\end{proof}

\begin{mythe}
\label{unckirchberg}
$\cU_{nc}(n)$ has the LLP; i.e., $\cU_{nc}(n) \otimes_{\min} \bofh=\cU_{nc}(n) \otimes_{\max} \bofh$.
\end{mythe}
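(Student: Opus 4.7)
The plan is to combine Lemma \ref{bofhpropertyv} with Lemma \ref{vnmincunc} to obtain the result almost immediately, so the proof will be short.

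First, I would invoke Lemma \ref{bofhpropertyv}, which establishes that $\bofh$ has property $\fV$; in particular, $\bofh$ has property $\fV_n$ for the given $n \geq 2$. This is the substantive input and has already been proved by the finite-dimensional compression/$w^*$-limit argument earlier in the paper.

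Next, I would apply the second statement of Lemma \ref{vnmincunc}, which says that if $\cS$ has property $\fV_n$, then $\cU_{nc}(n) \otimes_{\min} \cS = \cU_{nc}(n) \otimes_{\max} \cS$. Setting $\cS = \bofh$ (which is an operator system), the conclusion is exactly $\cU_{nc}(n) \otimes_{\min} \bofh = \cU_{nc}(n) \otimes_{\max} \bofh$, as desired. By Theorem \ref{osllp} applied to the operator system $\cV_n$ (which has the OSLLP by Lemma \ref{bofhpropertyv}, via the equivalent formulation stated there), one could alternatively phrase the key input as $\cV_n \otimes_{\min} \bofh = \cV_n \otimes_{\max} \bofh$, and then use Lemma \ref{vnmincunc} to lift this identity from $\cV_n$ to the containing $C^*$-algebra $\cU_{nc}(n)$.

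Since Hilbert spaces $\cH$ were arbitrary, this gives the LLP for $\cU_{nc}(n)$ in the sense of the theorem statement. There is essentially no obstacle at this stage of the paper; all the work has been done in the two preceding lemmas, with the real technical content being the dilation-based proof of Proposition \ref{vncommutingorderembedding} (which underlies Lemma \ref{vnmincunc}) and the finite-dimensional approximation together with the weak-$*$ compactness argument in Lemma \ref{bofhpropertyv}. The only thing to be careful about is to cite Lemma \ref{vnmincunc} in the direction ``$\fV_n \Rightarrow \min=\max$ on $\cU_{nc}(n)\otimes\cdot$'' rather than its converse.
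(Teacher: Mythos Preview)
Your proposal is correct and matches the paper's own proof essentially verbatim: the paper cites Lemma \ref{bofhpropertyv} (together with Proposition \ref{propertyv}) to get $\cV_n \otimes_{\min} \bofh=\cV_n \otimes_{\max} \bofh$ and then applies Lemma \ref{vnmincunc}. Your use of the ``In particular'' clause of Lemma \ref{vnmincunc} simply packages the same step, so there is no substantive difference.
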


\begin{proof}
By Lemma \ref{bofhpropertyv} and Proposition \ref{propertyv}, $\cV_n \otimes_{\min} \bofh=\cV_n \otimes_{\max} \bofh$.  Applying Lemma \ref{vnmincunc} gives the desired result.
\end{proof}

It should be noted that Kirchberg's Theorem for $C^*(F_n)$ follows from Theorem \ref{unckirchberg}.  To show this fact, we will need the notion of a retract of operator systems.  We will say that an operator system $\cS$ is a \textbf{retract} of an operator system $\cT$ if there are ucp maps $\psi:\cS \to \cT$ and $\chi:\cT \to \cS$ such that $\chi \circ \psi=\id_{\cS}$.

\begin{lem}
\label{retract}
Let $\cS_1,\cS_2,\cT_1,\cT_2$ be operator systems, and let $\tau_1,\tau_2 \in \{ \min,c,\max\}$.  For $i=1,2$, suppose that $\cS_i$ is a retract of $\cT_i$. If $\cT_1 \otimes_{\tau_1} \cT_2=\cT_1 \otimes_{\tau_2} \cT_2$ completely order isomorphically (respectively, order isomorphically), then $\cS_1 \otimes_{\tau_1} \cS_2=\cS_1 \otimes_{\tau_2} \cS_2$ completely order isomorphically (respectively, order isomorphically).  
\end{lem}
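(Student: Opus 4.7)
The plan is to use functoriality of the three tensor products together with the obvious commutation between tensor products of compositions. Let $\psi_i : \cS_i \to \cT_i$ and $\chi_i : \cT_i \to \cS_i$ be the ucp maps witnessing that $\cS_i$ is a retract of $\cT_i$, i.e., $\chi_i \circ \psi_i = \id_{\cS_i}$ for $i=1,2$. Recall that $\min$, $c$ and $\max$ are each functorial operator system tensor products, so for every $\tau \in \{\min,c,\max\}$ the maps $\psi_1 \otimes \psi_2 : \cS_1 \otimes_\tau \cS_2 \to \cT_1 \otimes_\tau \cT_2$ and $\chi_1 \otimes \chi_2 : \cT_1 \otimes_\tau \cT_2 \to \cS_1 \otimes_\tau \cS_2$ are ucp. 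Moreover, their composition, on elementary tensors, agrees with $(\chi_1 \circ \psi_1) \otimes (\chi_2 \circ \psi_2) = \id_{\cS_1 \otimes \cS_2}$, hence equals the identity on the algebraic tensor product $\cS_1 \otimes \cS_2$.

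Without loss of generality we may assume $\tau_1 \leq \tau_2$, so that $\id : \cS_1 \otimes_{\tau_2} \cS_2 \to \cS_1 \otimes_{\tau_1} \cS_2$ is automatically ucp. To establish the complete order isomorphism it then suffices to prove that the algebraic identity map in the reverse direction is completely positive. Fix $p \in \bN$ and suppose that $X \in M_p(\cS_1 \otimes \cS_2)$ is positive in $M_p(\cS_1 \otimes_{\tau_1} \cS_2)$. By functoriality of $\tau_1$, the image $Y := (\psi_1 \otimes \psi_2)^{(p)}(X)$ is positive in $M_p(\cT_1 \otimes_{\tau_1} \cT_2)$. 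By hypothesis this latter space is completely order isomorphic to $M_p(\cT_1 \otimes_{\tau_2} \cT_2)$ under the algebraic identity, so $Y$ is positive in $M_p(\cT_1 \otimes_{\tau_2} \cT_2)$. Now apply $\chi_1 \otimes \chi_2$, which is ucp on $\cT_1 \otimes_{\tau_2} \cT_2 \to \cS_1 \otimes_{\tau_2} \cS_2$ by functoriality of $\tau_2$; the result $(\chi_1 \otimes \chi_2)^{(p)}(Y)$ is positive in $M_p(\cS_1 \otimes_{\tau_2} \cS_2)$. Since on elementary tensors $(\chi_1 \otimes \chi_2) \circ (\psi_1 \otimes \psi_2)$ equals the identity, we recover $X$, proving positivity of $X$ in $M_p(\cS_1 \otimes_{\tau_2} \cS_2)$.

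For the order-isomorphic version, we run the identical argument with $p = 1$ only: the same diagram
\[
\cS_1 \otimes_{\tau_1} \cS_2 \xrightarrow{\psi_1 \otimes \psi_2} \cT_1 \otimes_{\tau_1} \cT_2 = \cT_1 \otimes_{\tau_2} \cT_2 \xrightarrow{\chi_1 \otimes \chi_2} \cS_1 \otimes_{\tau_2} \cS_2
\]
is composed of unital positive (even ucp) maps and composes to the identity, so positivity transfers from $\tau_1$ to $\tau_2$.

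There is no real obstacle here: the only thing to notice is that the hypothesis $\cT_1 \otimes_{\tau_1} \cT_2 = \cT_1 \otimes_{\tau_2} \cT_2$ gets used at a single point, namely to move $Y$ between the two tensor-product structures on $\cT_1 \otimes \cT_2$, after which functoriality of the target tensor product $\tau_2$ does the work of pulling positivity back down to $\cS_1 \otimes \cS_2$. Care must be taken only to verify that the two compositions $(\chi_1 \otimes \chi_2) \circ (\psi_1 \otimes \psi_2)$ agree with the identity on the algebraic tensor product (as opposed to merely being defined between isomorphic operator systems), but this is immediate from the universal property of the algebraic tensor product applied to bilinear maps.
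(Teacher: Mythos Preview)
Your proof is correct and follows essentially the same approach as the paper: both reduce to the case $\tau_1 \leq \tau_2$, use functoriality of the tensor products to get ucp maps $\psi_1 \otimes \psi_2$ and $\chi_1 \otimes \chi_2$, and observe that the composition through the commutative diagram recovers the identity on $\cS_1 \otimes \cS_2$. The paper presents this via a commuting square while you trace an element explicitly, but the argument is the same.
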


\begin{proof}
Since $\min \leq c \leq \max$ as operator system tensor products, we may assume that $\tau_1 \leq \tau_2$.  Since $\cS_i$ is a retract of $\cT_i$, there are ucp maps $\varphi_i:\cS_i \to \cT_i$ and $\psi_i:\cT_i \to \cS_i$ such that $\psi_i \circ \varphi_i=\id_{\cS_i}$. For each $j=1,2$, by functoriality of $\tau_j$, the maps $\varphi_1 \otimes \varphi_2:\cS_1 \otimes_{\tau_j} \cS_2 \to \cT_1 \otimes_{\tau_j} \cT_2$ and $\psi_1 \otimes \psi_2:\cT_1 \otimes_{\tau_j} \cT_2 \to \cS_1 \otimes_{\tau_j} \cS_2$ are ucp.  Moreover, the following diagram commutes:

\begin{tikzpicture}[every node/.style={midway}]
  \matrix[column sep={20em,between origins}, row sep={7em}] at (0,0) {
    \node(uncmin) {$\cT_1 \otimes_{\tau_1} \cT_2$}  ; & \node(uncmax) {$\cT_1 \otimes_{\tau_2} \cT_2$}; \\
    \node(wnmin) {$\cS_1 \otimes_{\tau_1} \cS_2$}; & \node (wnmax) {$\cS_1 \otimes_{\tau_2} \cS_2$};\\
  };
  \draw[->] (wnmin) -- (uncmin) node[anchor=east]  {$\varphi_1 \otimes \varphi_2$};
  \draw[->] (uncmin) -- (uncmax) node[anchor=south] {$\id_{\cT_1} \otimes \id_{\cT_2}$};
  \draw[->] (uncmax) -- (wnmax) node[anchor=west] {$\psi_1 \otimes \psi_2$};
  \draw[->] (wnmin) -- (wnmax) node[anchor=south] {$\id_{\cS_1} \otimes \id_{\cS_2}$};
\end{tikzpicture}

By assumption, the map $\id:\cT_1 \otimes_{\tau_1} \cT_2 \to \cT_1 \otimes_{\tau_2} \cT_2$ is completely positive (respectively, positive).  Thus, $\id:\cS_1 \otimes_{\tau_1} \cS_2 \to \cS_1 \otimes_{\tau_2} \cS_2$ is completely positive (respectively, positive).  The result follows.
\end{proof}

For the next lemma, we define the operator system $\cS_n \subseteq C^*(F_n)$ to be $\cS_n=\spn \{1,w_1,...,w_n,w_1^*,...,w_n^*\}$, where $w_1,...,w_n$ are the generators of $F_n$.

\begin{lem}
\label{uncretract}
Let $n \geq 2$.
\begin{enumerate}
\item
$C^*(F_n)$ is a retract of $\cU_{nc}(n)$.
\item
$\cS_n$ is a retract of $\cV_n$.
\end{enumerate}
\end{lem}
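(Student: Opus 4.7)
My plan is to construct explicit ucp retracts in both directions. For (1), I define $\chi:\cU_{nc}(n)\to C^*(F_n)$ via the universal property of $\cU_{nc}(n)$: the element $D:=\text{diag}(w_1,\dots,w_n)$ is a unitary in $M_n(C^*(F_n))$, so it induces a unital $*$-homomorphism $\chi$ with $\chi(u_{ij})=\delta_{ij}w_i$. The reverse map $\psi:C^*(F_n)\to\cU_{nc}(n)$ requires more care, since a naive Sz.-Nagy dilation argument would only yield a ucp map into $\bofh$ rather than into $\cU_{nc}(n)$ itself.

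The key observation is that the generating unitary $U=(u_{jk})$ of $\cU_{nc}(n)$ already supplies, inside $M_n(\cU_{nc}(n))$, a unitary dilation of every diagonal entry $u_{ii}$. Concretely, for each $i\in\{1,\dots,n\}$ let $P_i\in M_n(\bC)$ be the transposition permutation matrix exchanging coordinates $1$ and $i$ (with $P_1=I_n$); then $P_i^2=I_n$ and $W_i:=P_iUP_i$ is unitary in $M_n(\cU_{nc}(n))$ with $(W_i)_{1,1}=u_{ii}$. The universal property of $C^*(F_n)$, applied to the unitaries $W_1,\dots,W_n$, produces a unital $*$-homomorphism $\pi:C^*(F_n)\to M_n(\cU_{nc}(n))$ with $\pi(w_i)=W_i$. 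Composing $\pi$ with the ucp compression onto the $(1,1)$-corner yields a ucp map $\psi:C^*(F_n)\to\cU_{nc}(n)$ satisfying $\psi(w_i)=u_{ii}$.

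To verify $\chi\circ\psi=\id_{C^*(F_n)}$, note that the composition is ucp and satisfies $\chi\circ\psi(w_i)=\chi(u_{ii})=w_i$. Since both $w_i$ and its image are unitary, the standard multiplicative-domain argument (equality in the Schwarz inequality $\phi(a^*a)\ge\phi(a)^*\phi(a)$ at $a=w_i$) places every $w_i$ in the multiplicative domain of $\chi\circ\psi$. As the multiplicative domain is a unital $C^*$-subalgebra, it must equal all of $C^*(F_n)$; hence $\chi\circ\psi$ is a unital $*$-homomorphism fixing a generating set, so it is the identity. This proves (1).

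Statement (2) then follows by restriction: $\psi$ sends $1,w_i,w_i^*$ into $\{1,u_{ii},u_{ii}^*\}\subseteq\cV_n$, and $\chi$ sends $1,u_{ij},u_{ij}^*$ into $\cS_n$. The restrictions $\psi|_{\cS_n}:\cS_n\to\cV_n$ and $\chi|_{\cV_n}:\cV_n\to\cS_n$ are ucp, and by the previous paragraph their composition is already the identity on the spanning set $\{1,w_i,w_i^*\}$, hence on all of $\cS_n$. The principal conceptual difficulty in the argument is the construction of $\psi$ while remaining within the category of ucp maps into $\cU_{nc}(n)$ itself; once the permutation-conjugation trick is identified, producing the dilating unitaries $W_i$ inside $M_n(\cU_{nc}(n))$, the rest is routine.
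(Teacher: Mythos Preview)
Your proof is correct and follows essentially the same approach as the paper: both construct the $*$-homomorphism $\chi$ (the paper's $\pi$) from the diagonal unitary $\text{diag}(w_1,\dots,w_n)$, build the reverse ucp map $\psi$ by conjugating $U$ by permutation matrices to place $u_{ii}$ in the $(1,1)$-corner and then compressing, and verify $\chi\circ\psi=\id$ via the multiplicative-domain argument. Your write-up is in fact slightly more explicit than the paper's about the choice of permutation matrices and the Schwarz-inequality justification for the multiplicative domain step.
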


\begin{proof}
To prove (1), we note that $\begin{pmatrix} w_1 \\ & \ddots \\ & & w_n \end{pmatrix} \in M_n(C^*(F_n))$ is unitary.  Hence, there is a unital $*$-homomorphism $\pi:\cU_{nc}(n) \to C^*(F_n)$ such that $\pi(u_{ij})=0$ for $i \neq j$ and $\pi(u_{ii})=w_i$.  Then $\pi \otimes \id_{\cS}:\cU_{nc}(n) \otimes_{\max} \cS \to C^*(F_n) \otimes_{\max} \cS$ is ucp, while $\id:\cU_{nc}(n) \otimes_{\min} \cS \to \cU_{nc}(n) \otimes_{\max} \cS$ is ucp by Proposition \ref{propertyv} and Lemma \ref{vnmincunc}.  We let $U_1=U:=(u_{ij}) \in M_n(\cU_{nc}(n))$.  For $2 \leq i \leq n$, let $U_i$ be the conjugation of $U$ by a permutation matrix such that the $(1,1)$-entry of $U_i$ is $u_{ii}$.  Then each $U_i \in M_n(\cU_{nc}(n))$ is unitary, so by the universal property for $C^*(F_n)$, there is a unital $*$-homomorphism $\rho:C^*(F_n) \to M_n(\cU_{nc}(n))$ such that $\rho(w_i)=U_i$ for all $1 \leq i \leq n$.  Compressing to the $(1,1)$-entry in $M_n(\cU_{nc}(n))$ gives rise to a ucp map $\psi:C^*(F_n) \to \cU_{nc}(n)$ such that $\psi(w_i)=u_{ii}$ for all $1 \leq i \leq n$.  Since $\pi \circ \psi(w_i)=w_i$ and since $w_i$ is unitary, it follows that $w_i$ lies in the multiplicative domain of $\pi \circ \psi$.  Since $C^*(F_n)$ is generated by $\{w_1,...,w_n\}$, it follows that $\pi \circ \psi$ is multiplicative on $C^*(F_n)$.  The fact that $\pi \circ \psi(w_i)=w_i$ for all $i$ forces $\pi \circ \psi=\id_{C^*(F_n)}$.  Thus, (1) holds.

For (2), since $\psi(w_i)=u_{ii} \in \cV_n$ for all $1 \leq i \leq n$, we have $\psi(\cS_n) \subseteq \cV_n$.  Clearly $\pi(\cV_n)=\cS_n$.  Since $\pi \circ \psi=\id_{C^*(F_n)}$, it follows that $\cS_n$ is a retract of $\cV_n$ via the maps $\psi_{|\cS_n}:\cS_n \to \cV_n$ and $\pi_{|\cV_n}:\cV_n \to \cS_n$.
\end{proof}

\begin{mythe}
\label{propertyvandw}
Whenever $\cS$ is an operator system with property $\fV_n$, we have $C^*(F_n) \otimes_{\min} \cS=C^*(F_n) \otimes_{\max} \cS$.
\end{mythe}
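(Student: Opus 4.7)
The plan is to chain together three earlier results: Proposition \ref{propertyv}, which translates property $\fV_n$ into a min--max tensor identity for $\cV_n$; Lemma \ref{vnmincunc}, which promotes this identity to one for $\cU_{nc}(n)$; and Lemma \ref{retract}, which transports such identities along retracts. Since Lemma \ref{uncretract}(1) already shows that $C^*(F_n)$ is a retract of $\cU_{nc}(n)$, this is a very short assembly once the pieces are lined up correctly.

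First, I would use the hypothesis that $\cS$ has property $\fV_n$ together with Proposition \ref{propertyv} to conclude $\cV_n \otimes_{\min} \cS = \cV_n \otimes_{\max} \cS$ as operator systems. Because $\min \leq c \leq \max$, the identity map factors as
\[
\cV_n \otimes_{\min} \cS \to \cV_n \otimes_c \cS \to \cV_n \otimes_{\max} \cS,
\]
and the outer composition being a complete order isomorphism forces both intermediate identity maps to be complete order isomorphisms. In particular, $\cV_n \otimes_{\min} \cS = \cV_n \otimes_c \cS$. Lemma \ref{vnmincunc} then upgrades this to
\[
\cU_{nc}(n) \otimes_{\min} \cS = \cU_{nc}(n) \otimes_{\max} \cS.
\]

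To finish, I would apply Lemma \ref{retract} with $\cS_1 = C^*(F_n)$, $\cT_1 = \cU_{nc}(n)$, $\cS_2 = \cT_2 = \cS$, $\tau_1 = \min$ and $\tau_2 = \max$. By Lemma \ref{uncretract}(1) the first pair satisfies the retract hypothesis via the ucp maps $\pi$ and $\psi$ constructed there, and $\cS$ is trivially a retract of itself through $\id_{\cS}$. The conclusion of Lemma \ref{retract} then reads exactly $C^*(F_n) \otimes_{\min} \cS = C^*(F_n) \otimes_{\max} \cS$, as desired.

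There is no real obstacle in this proof: all of the analytic content has already been isolated in Proposition \ref{propertyv}, Lemma \ref{vnmincunc}, Lemma \ref{uncretract}, and Lemma \ref{retract}. The only thing to be mindful of is invoking the correct directionality---one needs $\min \leq c \leq \max$ to collapse the commuting tensor product and then Lemma \ref{vnmincunc} in the direction that passes from $\cV_n$ up to $\cU_{nc}(n)$---but both steps are already prepared in the paper.
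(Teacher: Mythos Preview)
Your proof is correct and follows essentially the same route as the paper: both combine the fact that property $\fV_n$ yields $\cU_{nc}(n)\otimes_{\min}\cS=\cU_{nc}(n)\otimes_{\max}\cS$ (the paper invokes the ``in particular'' clause of Lemma \ref{vnmincunc} directly, while you unpack it via Proposition \ref{propertyv} and the inequality $\min\leq c\leq\max$) with Lemma \ref{uncretract}(1) and Lemma \ref{retract} to descend to $C^*(F_n)$. The only difference is the level of detail, not the strategy.
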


\begin{proof}
By Lemma \ref{uncretract}, $C^*(F_n)$ is a retract of $\cU_{nc}(n)$.  Applying Lemma \ref{retract}, since $\cU_{nc}(n) \otimes_{\min} \cS=\cU_{nc}(n) \otimes_{\max} \cS$, it follows that $C^*(F_n) \otimes_{\min} \cS=C^*(F_n) \otimes_{\max} \cS$, which completes the proof.
\end{proof}

\begin{cor}
\emph{(Kirchberg's Theorem, \cite{kirchberg})}
\label{kirchberg}
Let $n \geq 2$.  Then $C^*(F_n)$ has the LLP.  In other words, $C^*(F_n) \otimes_{\min} \bofh=C^*(F_n) \otimes_{\max} \bofh$.
\end{cor}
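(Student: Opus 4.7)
The plan is essentially a one-line deduction from the tools already assembled, since the main content has been absorbed into Theorem \ref{propertyvandw} (which itself packages the $\cU_{nc}(n)$-version of Kirchberg's theorem together with the retraction of $C^*(F_n)$ onto $\cU_{nc}(n)$). What I need to do is verify that $\bofh$ satisfies the hypothesis of Theorem \ref{propertyvandw}, namely property $\fV_n$, and then invoke that theorem with $\cS = \bofh$.

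To that end, first I would recall Lemma \ref{bofhpropertyv}, which states that $\bofh$ has property $\fV$; in particular, for the fixed $n \geq 2$ appearing in the corollary, $\bofh$ has property $\fV_n$. With this hypothesis in hand, applying Theorem \ref{propertyvandw} to $\cS = \bofh$ immediately yields $C^*(F_n) \otimes_{\min} \bofh = C^*(F_n) \otimes_{\max} \bofh$, which is the desired statement that $C^*(F_n)$ has the LLP.

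There is no real obstacle here: the genuine work has already been done in Lemma \ref{bofhpropertyv} (pushing the finite-dimensional property $\fV_n$ of $\cB(\cM)$ through a weak-$*$ compactness argument indexed by finite-dimensional subspaces $\cM \leq \cH$), in Lemma \ref{vnmincunc} (bootstrapping equality of tensor norms from $\cV_n$ up to $\cU_{nc}(n)$ via Arveson's commutant lifting and the multiplicative domain trick), and in Lemma \ref{uncretract} together with Lemma \ref{retract} (showing that $C^*(F_n)$ is a retract of $\cU_{nc}(n)$ via the diagonal unitary $\mathrm{diag}(w_1,\dots,w_n)$ and a compression from the universal unitary $U = (u_{ij})$). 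The corollary itself is just a matter of chaining those implications together, so the proof reduces to citing Lemma \ref{bofhpropertyv} and Theorem \ref{propertyvandw} in sequence.
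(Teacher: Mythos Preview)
Your proposal is correct and matches the paper's intended argument exactly: the corollary is stated without proof precisely because it follows immediately by combining Lemma \ref{bofhpropertyv} (that $\bofh$ has property $\fV_n$) with Theorem \ref{propertyvandw} applied to $\cS=\bofh$. There is nothing to add.
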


Using Theorem \ref{propertyvandw}, it is possible to characterize unital $C^{\ast}$-algebras having the WEP and operator systems having the DCEP in terms of tensor products with $\cV_2$.

\begin{mythe}
\label{weppropertyv2}
Let $\cA$ be a unital $C^{\ast}$-algebra.  The following are equivalent.
\begin{enumerate}
\item
$\cA$ has the WEP.
\item
$\cA$ has property $\fV$.
\item
$\cA$ has property $\fV_2$.
\item
$\cA \otimes_{\min} \cV_2=\cA \otimes_{\max} \cV_2$.
\end{enumerate}
\end{mythe}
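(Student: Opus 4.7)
The equivalence $(3) \Leftrightarrow (4)$ is Proposition \ref{propertyv} applied with $\cS = \cA$ and $n = 2$, while $(2) \Rightarrow (3)$ is immediate from the definitions. My plan is therefore to close the cycle $(1) \Rightarrow (2) \Rightarrow (3) \Rightarrow (1)$ by establishing the two nontrivial implications $(1) \Rightarrow (2)$ and $(3) \Rightarrow (1)$.

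For $(1) \Rightarrow (2)$, I would combine the two nuclearity-type descriptions already available at this stage of the paper. On one hand, the WEP of $\cA$ means, by Theorem \ref{wepnuclearity}, that $\cA$ is $(\el,\max)$-nuclear, so in particular $\cA \otimes_{\el} \cV_n = \cA \otimes_{\max} \cV_n$ for every $n$. On the other hand, Lemma \ref{bofhpropertyv} together with Theorem \ref{osllp} gives that $\cV_n$ has the OSLLP and hence is $(\min,\er)$-nuclear, so $\cV_n \otimes_{\min} \cA = \cV_n \otimes_{\er} \cA$. Using the symmetry isomorphism $\cV_n \otimes_{\er} \cA \simeq \cA \otimes_{\el} \cV_n$ from Section 1 (and the symmetry of $\max$), these two equalities chain to
\[
\cV_n \otimes_{\min} \cA \;\simeq\; \cA \otimes_{\el} \cV_n \;=\; \cA \otimes_{\max} \cV_n \;\simeq\; \cV_n \otimes_{\max} \cA
\]
for every $n \geq 2$. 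By Proposition \ref{propertyv} this is exactly property $\fV$.

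For $(3) \Rightarrow (1)$, I would feed property $\fV_2$ into Theorem \ref{propertyvandw} with $n = 2$ and $\cS = \cA$ to obtain $C^*(F_2) \otimes_{\min} \cA = C^*(F_2) \otimes_{\max} \cA$, and then invoke Kirchberg's classical characterization of the WEP (that, for a unital $C^*$-algebra $\cA$, the WEP is equivalent to $C^*(F_n) \otimes_{\min} \cA = C^*(F_n) \otimes_{\max} \cA$ for some, equivalently all, $n \geq 2$, see \cite{kirchberg93}) to conclude that $\cA$ has the WEP. This is the only point where the argument reaches outside the framework built up in the paper. The only subtlety I anticipate is simply keeping the orientation of $\el$ and $\er$ straight in the chain for $(1) \Rightarrow (2)$; once that bookkeeping is done, the proof is a concatenation of previously established results together with the one external appeal to Kirchberg's theorem.
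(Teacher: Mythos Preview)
Your proposal is correct and follows essentially the same route as the paper. The only cosmetic difference is in the final step: the paper proves $(4)\Rightarrow(1)$ and, after reaching $C^*(F_2)\otimes_{\min}\cA=C^*(F_2)\otimes_{\max}\cA$, explicitly passes to $C^*(F_\infty)$ via the retract argument (Lemma~\ref{retract} and the embedding $F_\infty\hookrightarrow F_2$) before invoking \cite[Proposition~1.1(iii)]{kirchberg93}, whereas you cite Kirchberg's criterion for $F_n$ directly.
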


\begin{proof}
Clearly (2) implies (3), while (3) implies (4) by Proposition \ref{propertyv}.  Suppose that $\cA$ has the WEP.  By Theorem \ref{wepnuclearity}, $\cA$ is $(\el,\max)$-nuclear.  By Theorem \ref{osllp}, each $\cV_n$ having the OSLLP implies that each $\cV_n$ is $(\min,\er)$-nuclear.  Hence, $$\cV_n \otimes_{\min} \cA=\cV_n \otimes_{\er} \cA=\cA \otimes_{\el} \cV_n=\cA \otimes_{\max} \cV_n=\cV_n \otimes_{\max} \cA.$$
By Proposition \ref{propertyv} and the fact that $n \geq 2$ was arbitrary, we conclude that $\cA$ has property $\fV$.  This shows that (1) implies (2).

Finally, we prove that (4) implies (1).  Suppose that $\cA \otimes_{\min} \cV_2=\cA \otimes_{\max} \cV_2$.  Then by Lemma \ref{vnmincunc}, we have $\cU_{nc}(2) \otimes_{\min} \cA=\cU_{nc}(2) \otimes_{\max} \cA$.  Using Theorem \ref{propertyvandw}, we have $C^*(F_2) \otimes_{\min} \cA=C^*(F_2) \otimes_{\max} \cA$.  As $F_{\infty}$ embeds as a subgroup into $F_2$, by \cite[Proposition 8.8]{pisierbook} it follows that there are ucp maps $\Phi:C^*(F_{\infty}) \to C^*(F_2)$ and $\Psi:C^*(F_2) \to C^*(F_{\infty})$ with $\Psi \circ \Phi=\id$.  By Lemma \ref{retract}, we have $C^*(F_{\infty}) \otimes_{\min} \cA=C^*(F_{\infty}) \otimes_{\max} \cA$.  By \cite[Proposition 1.1(iii)]{kirchberg93}, $\cA$ has the WEP.
\end{proof}

There is a similar characterization for operator systems with the DCEP.

\begin{mythe}
\label{dcepv2}
Let $\cS$ be an operator system.  The following are equivalent.
\begin{enumerate}
\item
$\cS$ has the DCEP.
\item
$\cS \otimes_{\min} \cV_n=\cS \otimes_c \cV_n$ for all $n \geq 2$.
\item
$\cS \otimes_{\min} \cV_2=\cS \otimes_c \cV_2$.
\end{enumerate}
\end{mythe}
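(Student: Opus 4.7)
The plan is to prove the chain of implications $(1)\Rightarrow(2)\Rightarrow(3)\Rightarrow(1)$. The middle implication is immediate by specializing to $n=2$, so the real content lies in $(1)\Rightarrow(2)$ and $(3)\Rightarrow(1)$, both of which should follow from earlier results in the paper combined with known retract/nuclearity manipulations.

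For $(1)\Rightarrow(2)$, I would assemble four known facts and chain them. By Theorem \ref{dcep}, the DCEP is equivalent to $(\el,c)$-nuclearity of $\cS$. By Lemma \ref{bofhpropertyv}, each $\cV_n$ has the OSLLP, so by Theorem \ref{osllp}, $\cV_n$ is $(\min,\er)$-nuclear. Combining these with symmetry of the minimal tensor product and the paper's observation that $\cS \otimes_{\er} \cT \simeq \cT \otimes_{\el} \cS$, I obtain for every $n \geq 2$ the chain
\begin{equation*}
\cS \otimes_{\min} \cV_n = \cV_n \otimes_{\min} \cS = \cV_n \otimes_{\er} \cS = \cS \otimes_{\el} \cV_n = \cS \otimes_c \cV_n,
\end{equation*}
each equality being a complete order isomorphism via the identity map.

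For $(3)\Rightarrow(1)$, I would use the $\cV_n \leftrightarrow \cU_{nc}(n)$ bridge followed by a retract argument back to $C^*(F_\infty)$. Starting from $\cS \otimes_{\min} \cV_2 = \cS \otimes_c \cV_2$, Lemma \ref{vnmincunc} promotes this to $\cU_{nc}(2) \otimes_{\min} \cS = \cU_{nc}(2) \otimes_{\max} \cS$. Lemma \ref{uncretract} tells us $C^*(F_2)$ is a retract of $\cU_{nc}(2)$, so by Lemma \ref{retract} (with $\cS$ trivially a retract of itself) we get $C^*(F_2) \otimes_{\min} \cS = C^*(F_2) \otimes_{\max} \cS$. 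Since $F_\infty$ embeds as a subgroup of $F_2$, \cite[Proposition 8.8]{pisierbook} supplies a ucp section exhibiting $C^*(F_\infty)$ as a retract of $C^*(F_2)$; another application of Lemma \ref{retract} yields $C^*(F_\infty) \otimes_{\min} \cS = C^*(F_\infty) \otimes_{\max} \cS$. By Theorem \ref{dcep}(3), $\cS$ has the DCEP.

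The implication $(2)\Rightarrow(3)$ is of course immediate. I do not anticipate a genuine obstacle here: the proof is essentially a bookkeeping exercise that threads together the OSLLP of $\cV_n$ (already established), the $\cV_n$--$\cU_{nc}(n)$ transfer principle (Lemma \ref{vnmincunc}), the retract diagram $\cV_2 \rightsquigarrow \cU_{nc}(2) \rightsquigarrow C^*(F_2) \rightsquigarrow C^*(F_\infty)$, and the characterization of DCEP through $C^*(F_\infty)$. The only subtlety worth double-checking is that all asymmetric identifications $\er \leftrightarrow \el$ and $\min$-commutativity used in $(1)\Rightarrow(2)$ are genuine complete order isomorphisms (not merely order isomorphisms), which they are because each is implemented by the swap or identity map, both ucp with ucp inverses.
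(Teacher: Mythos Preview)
Your proposal is correct and follows essentially the same route as the paper's proof. The only cosmetic difference is that in $(3)\Rightarrow(1)$ the paper packages the passage from $\cU_{nc}(2)$ to $C^*(F_2)$ by citing Theorem \ref{propertyvandw} directly, whereas you unpack that step into its constituent Lemma \ref{uncretract} plus Lemma \ref{retract}; the underlying argument is identical.
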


\begin{proof}
Assume that $\cS$ has the DCEP.  By Theorem \ref{dcep}, $\cS$ is $(\el,c)$-nuclear, while $\cV_n$ is $(\min,\er)$-nuclear.  It follows that $\cS \otimes_{\min} \cV_n=\cS \otimes_c \cV_n$ for all $n \geq 2$.  Hence, (1) implies (2).  Clearly (2) implies (3).  If (3) is true, then by Lemma \ref{vnmincunc} and by Theorem \ref{propertyvandw}, we must have $\cS \otimes_{\min} C^*(F_2)=\cS \otimes_{\max} C^*(F_2)$.  Since $C^*(F_{\infty})$ is a retract of $C^*(F_2)$, using Lemma \ref{retract} gives $\cS \otimes_{\min} C^*(F_{\infty})=\cS \otimes_{\max} C^*(F_{\infty})$.  Applying Theorem \ref{dcep} shows that $\cS$ has the DCEP, so that (1) is true.
\end{proof}

\section{Relating $\cV_n$ to Kirchberg's conjecture}

The proof of Theorem \ref{propertyvandw} shows that $C^*(F_n)$ is a retract of $\cU_{nc}(n)$ via ucp maps.  Using this trick allows for a connection between $\cU_{nc}(n)$ and Kirchberg's conjecture.

\begin{mythe}
\label{conditionforkirchberg}
If $\cU_{nc}(n) \otimes_{\min} \cU_{nc}(n) =\cU_{nc}(n) \otimes_{\max} \cU_{nc}(n)$ for some $n \geq 2$, then Kirchberg's conjecture is valid.
\end{mythe}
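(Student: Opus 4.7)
The plan is to apply the retract machinery already developed in the excerpt directly to the hypothesis. By Lemma \ref{uncretract}(1), $C^*(F_n)$ is a retract of $\cU_{nc}(n)$, via ucp maps $\psi: C^*(F_n) \to \cU_{nc}(n)$ sending $w_i \mapsto u_{ii}$ and $\pi: \cU_{nc}(n) \to C^*(F_n)$ sending $u_{ij} \mapsto \delta_{ij} w_i$, with $\pi \circ \psi = \id_{C^*(F_n)}$. This retract structure is available for both tensor factors simultaneously.

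Next, I would invoke Lemma \ref{retract} with $\cS_1 = \cS_2 = C^*(F_n)$, $\cT_1 = \cT_2 = \cU_{nc}(n)$, $\tau_1 = \min$, and $\tau_2 = \max$. The hypothesis $\cU_{nc}(n) \otimes_{\min} \cU_{nc}(n) = \cU_{nc}(n) \otimes_{\max} \cU_{nc}(n)$ is precisely the input required by that lemma, and the conclusion delivers $C^*(F_n) \otimes_{\min} C^*(F_n) = C^*(F_n) \otimes_{\max} C^*(F_n)$ as a complete order isomorphism. Since Kirchberg's formulation of Connes' embedding problem is the assertion that this equality holds for some (equivalently, all) $n \geq 2$, as stated in the introduction, the theorem is proved.

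There is no real obstacle: the ucp retracts from Lemma \ref{uncretract}(1) and the functorial transfer of tensor product identities through retracts in Lemma \ref{retract} combine in a single step. The only thing worth being careful about is to ensure that we apply Lemma \ref{retract} to the tensor product of $C^*(F_n)$ with itself (rather than, say, $C^*(F_n)$ with some other object), so that the same retract pair is used on both sides of the tensor product.
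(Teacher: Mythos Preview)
Your proposal is correct and follows essentially the same approach as the paper: invoke Lemma~\ref{uncretract}(1) to get that $C^*(F_n)$ is a retract of $\cU_{nc}(n)$, then apply Lemma~\ref{retract} with $\tau_1=\min$, $\tau_2=\max$ to transfer the hypothesis to $C^*(F_n)\otimes_{\min}C^*(F_n)=C^*(F_n)\otimes_{\max}C^*(F_n)$.
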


\begin{proof}
It is well known that Kirchberg's conjecture is true if and only if it holds for some $n \in \bN$ with $n \geq 2$.  Now, if $\cU_{nc}(n) \otimes_{\min} \cU_{nc}(n)=\cU_{nc}(n) \otimes_{\max} \cU_{nc}(n)$, then combining Lemmas \ref{retract} and \ref{uncretract} yields the complete order isomorphism $C^*(F_n) \otimes_{\min} C^*(F_n)=C^*(F_n) \otimes_{\max} C^*(F_n)$.
\end{proof}

The link between Kirchberg's conjecture and the WEP allows us to prove the converse of Theorem \ref{conditionforkirchberg}.  In other words, while the assumption that $\cU_{nc}(n) \otimes_{\min} \cU_{nc}(n)=\cU_{nc}(n) \otimes_{\max} \cU_{nc}(n)$ for some $n \geq 2$ appears to be slightly stronger than Kirchberg's conjecture, it is in fact equivalent to Kirchberg's conjecture.

\begin{mythe}
\label{uncequivalenttokirchberg}
The following statements are equivalent.
\begin{enumerate}
\item
$\cV_2 \otimes_{\min} \cV_2=\cV_2 \otimes_c \cV_2$.
\item
$\cU_{nc}(2) \otimes_{\min} \cU_{nc}(2)=\cU_{nc}(2) \otimes_{\max} \cU_{nc}(2)$.
\item
$C^*(F_2) \otimes_{\min} C^*(F_2)=C^*(F_2) \otimes_{\max} C^*(F_2)$.
\item
$C^*(F_{\infty}) \otimes_{\min} C^*(F_{\infty})=C^*(F_{\infty}) \otimes_{\max} C^*(F_{\infty})$.
\item
Connes' embedding problem has a positive answer.
\end{enumerate}
\end{mythe}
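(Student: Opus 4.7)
The plan is to close the cycle $(1) \Leftrightarrow (2) \Rightarrow (3) \Leftrightarrow (4) \Leftrightarrow (5) \Rightarrow (1)$. The implication $(2) \Rightarrow (3)$ is exactly Theorem \ref{conditionforkirchberg}, while $(3) \Leftrightarrow (4)$ follows from Lemma \ref{retract} because $F_2$ and $F_{\infty}$ embed as subgroups of one another, making $C^{\ast}(F_2)$ and $C^{\ast}(F_{\infty})$ mutual ucp retracts in the sense of \cite[Proposition 8.8]{pisierbook}. The equivalence $(4) \Leftrightarrow (5)$ is Kirchberg's original theorem \cite{kirchberg}.

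For $(1) \Leftrightarrow (2)$, I would apply Lemma \ref{vnmincunc} twice. Assuming $(1)$, the lemma with $\cS = \cV_2$ gives $\cU_{nc}(2) \otimes_{\min} \cV_2 = \cU_{nc}(2) \otimes_{\max} \cV_2$; using the symmetry of $\max$ and the fact that $c$ and $\max$ coincide whenever one factor is a unital $C^{\ast}$-algebra \cite[Theorem 6.7]{KPTT}, the right-hand side equals $\cV_2 \otimes_c \cU_{nc}(2)$, so the identity reads $\cV_2 \otimes_{\min} \cU_{nc}(2) = \cV_2 \otimes_c \cU_{nc}(2)$, and a second application of Lemma \ref{vnmincunc}, now with $\cS = \cU_{nc}(2)$, yields $(2)$. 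Conversely, assuming $(2)$, the same lemma gives $\cV_2 \otimes_{\min} \cU_{nc}(2) = \cV_2 \otimes_c \cU_{nc}(2)$; restricting to the operator subsystem $\cV_2 \otimes \cV_2$ returns $(1)$, where injectivity of $\min$ handles the left-hand side and Proposition \ref{vncommutingorderembedding} (combined with the same $\max$-symmetry identifications as above) handles the right-hand side.

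The main step is $(5) \Rightarrow (1)$, which I would prove by invoking the classical reformulation of Connes' embedding conjecture: $(5)$ is equivalent to the assertion that every unital $C^{\ast}$-algebra with the LLP has the WEP. Since $\cU_{nc}(2)$ has the LLP by Theorem \ref{unckirchberg}, assuming $(5)$ forces $\cU_{nc}(2)$ to have the WEP. Theorem \ref{weppropertyv2} then converts the WEP of $\cU_{nc}(2)$ into the equality $\cU_{nc}(2) \otimes_{\min} \cV_2 = \cU_{nc}(2) \otimes_{\max} \cV_2$, and Lemma \ref{vnmincunc} with $\cS = \cV_2$ rewrites this precisely as $(1)$. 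The only nontrivial external input at this step is Kirchberg's classical theorem; everything else is already assembled in the preceding sections. I expect the principal bookkeeping difficulty to be the $(1) \Leftrightarrow (2)$ equivalence, where the interplay between functoriality of the tensor products, the symmetry of $\max$, the coincidence $c = \max$ for $C^{\ast}$-algebra factors, and Proposition \ref{vncommutingorderembedding} must all be tracked carefully so that each restriction of an operator system tensor product really is the expected one at the level of complete orders.
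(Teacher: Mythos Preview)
Your proposal is correct and follows essentially the same cycle as the paper. Two minor differences are worth noting. First, for $(1)\Rightarrow(2)$ the paper simply writes ``Using Lemma~\ref{vnmincunc} shows that (1) implies (2),'' whereas you correctly spell out that two applications of the lemma are needed, with the intermediate passage through $\cU_{nc}(2)\otimes_{\min}\cV_2=\cU_{nc}(2)\otimes_{\max}\cV_2$ and the symmetry/$c=\max$ identification for a $C^*$-algebra factor; your added direction $(2)\Rightarrow(1)$ is also fine but redundant once the cycle closes. Second, for the step closing the loop back to $(1)$, the paper goes directly at the operator-system level: it quotes \cite[Theorem~9.1]{quotients}, which says that Kirchberg's conjecture is equivalent to every $(\min,\er)$-nuclear operator system $\cS$ satisfying $\cS\otimes_{\min}\cS=\cS\otimes_c\cS$, and applies this to $\cV_2$ (which is $(\min,\er)$-nuclear by Lemma~\ref{bofhpropertyv} and Theorem~\ref{osllp}). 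Your route instead stays at the $C^*$-algebra level: use Kirchberg's ``LLP $\Rightarrow$ WEP'' reformulation on $\cU_{nc}(2)$, then feed the WEP of $\cU_{nc}(2)$ into Theorem~\ref{weppropertyv2} and Lemma~\ref{vnmincunc}. Both are valid; the paper's is one line shorter, while yours has the virtue of using only results already proved in the paper (Theorems~\ref{unckirchberg} and~\ref{weppropertyv2}) together with Kirchberg's classical $C^*$-algebraic statement, rather than the operator-system refinement from \cite{quotients}. (A small citation point: the equivalence $(4)\Leftrightarrow(5)$ is in \cite{kirchberg93}, not \cite{kirchberg}.)
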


\begin{proof}
Note that if (3) holds, then since $C^*(F_{\infty})$ is a retract of $C^*(F_2)$, (4) also holds by the same argument as in the proof of Theorem \ref{weppropertyv2}.  Clearly $F_2$ embeds into $F_{\infty}$ so that, by \cite[Proposition 8.8]{pisierbook}, $C^*(F_2)$ is a retract of $C^*(F_{\infty})$.  Hence, (4) implies (3).  Using Lemma \ref{vnmincunc} shows that (1) implies (2), while Theorem \ref{conditionforkirchberg} shows that (2) implies (3).  Assuming (4) is true, it follows that $C^*(F_{\infty})$ has the WEP \cite{kirchberg93}.  Then \cite[Theorem~9.1]{quotients} shows that any operator system $\cS$ that is $(\min,\er)$-nuclear satisfies $\cS \otimes_{\min} \cS=\cS \otimes_c \cS$.  By Lemma \ref{bofhpropertyv} and Theorem \ref{osllp}, $\cV_2$ is $(\min,\er)$-nuclear.  Therefore, $\cV_2 \otimes_{\min} \cV_2=\cV_2 \otimes_c \cV_2$, as required.
\end{proof}

Because $\cS_n$ is a retract of $\cV_n$, we can prove the following.

\begin{pro}
\label{vncnotmax}
For all $n,m \geq 2$, $\cV_n \otimes_c \cV_m \neq \cV_n \otimes_{\max} \cV_m$.
\end{pro}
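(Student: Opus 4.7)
My plan is to reduce the statement to the analogous inequality $\cS_n \otimes_c \cS_m \neq \cS_n \otimes_{\max} \cS_m$ for the free-group operator systems, which is available from the literature, and to pass between the two via the retract machinery of Lemma \ref{retract} combined with Lemma \ref{uncretract}. The whole argument thus runs by contraposition: if the $c$ and $\max$ tensor products were to agree on $\cV_n \otimes \cV_m$, then by functoriality they would agree on the retracts $\cS_n \otimes \cS_m$ as well, which is false.

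More precisely, I would argue as follows. Assume, for contradiction, that $\cV_n \otimes_c \cV_m = \cV_n \otimes_{\max} \cV_m$ as operator systems. By Lemma \ref{uncretract}, we have ucp maps $\psi_n \colon \cS_n \to \cV_n$ and $\pi_n \colon \cV_n \to \cS_n$ with $\pi_n \circ \psi_n = \id_{\cS_n}$, and analogously $\psi_m, \pi_m$ for $m$. Apply Lemma \ref{retract} with $\cS_1 = \cS_n$, $\cS_2 = \cS_m$, $\cT_1 = \cV_n$, $\cT_2 = \cV_m$, $\tau_1 = c$, $\tau_2 = \max$; the hypothesis of that lemma is precisely our standing assumption, and its conclusion is that $\cS_n \otimes_c \cS_m = \cS_n \otimes_{\max} \cS_m$ completely order isomorphically.

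It remains to invoke the known strict inequality $\cS_n \otimes_c \cS_m \neq \cS_n \otimes_{\max} \cS_m$ for $n,m \geq 2$, available from the Farenick--Paulsen analysis of the operator system $\cS_n$ (see \cite{FP}) and the subsequent quotient-theoretic framework in \cite{quotients}. Combined with the reduction above, this produces the desired contradiction and completes the proof.

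The main obstacle, and indeed the entire mathematical content of the proposition, lies in the last step: the retract step is purely formal. If one wished to avoid citing the $\cS_n$ inequality as a black box, the route would be to note that $\cS_n$ has the OSLLP but that $\cS_n$ cannot have the WEP (since $C^*(F_n)$ is a retract of $\cU_{nc}(n)$ and $\cS_n$ is a retract of $\cV_n$, WEP would propagate to $C^*(F_n)$, contradicting Kirchberg), and then to exploit the Kavruk-type identity $(\el,\max)$-nuclearity $=$ WEP together with the chain $\cS_n \otimes_{\el} \cT = \cT \otimes_{\er} \cS_n \leq \cT \otimes_c \cS_n$ valid for any $\cT$ with the OSLLP, applied to $\cT = \cS_m$. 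The delicate point in this alternative route is extracting a genuine failure of $c = \max$ (not merely a failure of $\el = \max$) out of the failure of WEP; this is precisely where the structural work in \cite{FP}/\cite{quotients} is needed.
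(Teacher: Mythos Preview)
Your main argument is correct and essentially identical to the paper's: argue by contradiction, use that $\cS_n$ is a retract of $\cV_n$ (Lemma~\ref{uncretract}), apply Lemma~\ref{retract} to push the equality $c=\max$ down to $\cS_n \otimes \cS_m$, and invoke the known failure there. The paper cites this last fact as \cite[Theorem~3.8]{FKPT} rather than \cite{FP}/\cite{quotients}; you should check that the precise statement you need actually appears in the reference you give.

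One caution about your ``alternative route'' paragraph: the claim that WEP for $\cS_n$ (or for $C^*(F_n)$) would ``contradict Kirchberg'' is backwards. Whether $C^*(F_n)$ has the WEP is precisely Kirchberg's conjecture, which is open, so you cannot use its failure as an input. That sketch therefore does not yield an unconditional proof, and the structural input from \cite{FKPT} (or equivalent) really is needed.
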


\begin{proof}
By \cite[Theorem 3.8]{FKPT}, we have $\cS_n \otimes_c \cS_m \neq \cS_n \otimes_{\max} \cS_m$ for all $n,m \geq 2$.  Hence, if $\cV_n \otimes_c \cV_m=\cV_n \otimes_{\max} \cV_m$, then by Lemmas \ref{retract} and \ref{uncretract}, we have $\cS_n \otimes_c \cS_m=\cS_n \otimes_{\max} \cS_m$, which is a contradiction.
\end{proof}

\begin{cor}
For all $n,m \geq 2$, $C_e^*(\cV_n \otimes_{\max} \cV_m) \neq C_e^*(\cV_n) \otimes_{\max} C_e^*(\cV_m)$.
\end{cor}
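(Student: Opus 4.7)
The plan is to argue by contradiction. Suppose, for some $n,m \geq 2$, that $C_e^{\ast}(\cV_n \otimes_{\max} \cV_m) = C_e^{\ast}(\cV_n) \otimes_{\max} C_e^{\ast}(\cV_m) = \cU_{nc}(n) \otimes_{\max} \cU_{nc}(m)$. By the defining property of the C*-envelope, the canonical linear inclusion $\cV_n \otimes_{\max} \cV_m \hookrightarrow \cU_{nc}(n) \otimes_{\max} \cU_{nc}(m)$ would then be a unital complete order embedding.

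The key is to show that $\cV_n \otimes_c \cV_m$ also embeds as a complete order subsystem of $\cU_{nc}(n) \otimes_{\max} \cU_{nc}(m)$ through the very same underlying linear map $\cV_n \otimes \cV_m \hookrightarrow \cU_{nc}(n) \otimes \cU_{nc}(m)$. I would obtain this by two applications of Proposition \ref{vncommutingorderembedding}. First, taking $\cS = \cV_m$ gives a complete order embedding $\cV_n \otimes_c \cV_m \hookrightarrow \cU_{nc}(n) \otimes_{\max} \cV_m$. Since $\cU_{nc}(n)$ is a unital C*-algebra, \cite[Theorem 6.7]{KPTT} gives $\cU_{nc}(n) \otimes_{\max} \cV_m = \cU_{nc}(n) \otimes_c \cV_m$, which by symmetry of the commuting tensor product is completely order isomorphic to $\cV_m \otimes_c \cU_{nc}(n)$. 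Applying Proposition \ref{vncommutingorderembedding} again with indices swapped and $\cS = \cU_{nc}(n)$ produces a complete order embedding $\cV_m \otimes_c \cU_{nc}(n) \hookrightarrow \cU_{nc}(m) \otimes_{\max} \cU_{nc}(n)$, and by symmetry of $\max$ the target is completely order isomorphic to $\cU_{nc}(n) \otimes_{\max} \cU_{nc}(m)$. Composing these maps yields a complete order embedding of $\cV_n \otimes_c \cV_m$ into $\cU_{nc}(n) \otimes_{\max} \cU_{nc}(m)$.

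Since both $\cV_n \otimes_{\max} \cV_m$ and $\cV_n \otimes_c \cV_m$ would then embed into $\cU_{nc}(n) \otimes_{\max} \cU_{nc}(m)$ as operator subsystems via the same vector space inclusion, positivity at every matrix level coincides, so $\cV_n \otimes_{\max} \cV_m = \cV_n \otimes_c \cV_m$ completely order isomorphically. This contradicts Proposition \ref{vncnotmax}, finishing the proof. The only subtlety that really needs care is verifying that the various flip isomorphisms from symmetry of $\max$ and $c$, composed with the identification $\cU_{nc}(n) \otimes_{\max} \cV_m = \cV_m \otimes_c \cU_{nc}(n)$, do recover the canonical linear inclusion $\cV_n \otimes \cV_m \hookrightarrow \cU_{nc}(n) \otimes \cU_{nc}(m)$ on elementary tensors; this is routine bookkeeping but is the one point where one must be precise.
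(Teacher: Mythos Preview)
Your proof is correct and follows essentially the same route as the paper: argue by contradiction, use the $C^*$-envelope property to embed $\cV_n \otimes_{\max} \cV_m$ into $\cU_{nc}(n) \otimes_{\max} \cU_{nc}(m)$, use Proposition~\ref{vncommutingorderembedding} to embed $\cV_n \otimes_c \cV_m$ into the same algebra via the same linear inclusion, and contradict Proposition~\ref{vncnotmax}. Your two-step application of Proposition~\ref{vncommutingorderembedding} together with the $c = \max$ identification for $C^*$-algebras is exactly the justification the paper's one-line invocation of that proposition implicitly relies on.
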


\begin{proof}
Suppose that $C_e^*(\cV_n \otimes_{\max} \cV_m)=C_e^*(\cV_n) \otimes_{\max} C_e^*(\cV_m)$.  The latter $C^{\ast}$-algebra is $\cU_{nc}(n) \otimes_{\max} \cU_{nc}(m)$.  Applying Proposition \ref{vncommutingorderembedding}, $\cV_n \otimes_c \cV_m$ is completely order isomorphic to its inclusion in $\cU_{nc}(n) \otimes_{\max} \cU_{nc}(m)$.  The operator system $\cV_n \otimes_{\max} \cV_m$ is completely order isomorphic to its inclusion in $C_e^*(\cV_n \otimes_{\max} \cV_m)$ \cite{hamana}.  Thus, $\cV_n \otimes_c \cV_m=\cV_n \otimes_{\max} \cV_m$, contradicting Proposition \ref{vncnotmax}.
\end{proof}

\begin{cor}
Let $U=(u_{ij}) \in M_n(\cU_{nc}(n))$ and $V=(v_{k\ell}) \in M_m(\cU_{nc}(m))$ be the matrices of generators of $\cU_{nc}(n)$ and $\cU_{nc}(m)$, respectively, where $n,m \geq 2$.  Then $U_0=(u_{ij} \otimes 1) \in M_n(C_e^*(\cV_n \otimes_{\max} \cV_m))$ and $V_0=(1 \otimes v_{k\ell}) \in M_m(C_e^*(\cV_n \otimes_{\max} \cV_m))$ fail to be unitary.
\end{cor}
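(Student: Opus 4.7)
The plan is to suppose, for contradiction, that $U_0$ is unitary in $M_n(C_e^*(\cV_n \otimes_{\max} \cV_m))$, and derive a contradiction with Proposition \ref{vncnotmax}. A symmetric argument will then rule out $V_0$ being unitary.

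Assuming $U_0$ is unitary, the universal property of $\cU_{nc}(n)$ yields a unital $*$-homomorphism $\pi:\cU_{nc}(n)\to C_e^*(\cV_n\otimes_{\max}\cV_m)$ with $\pi(u_{ij})=u_{ij}\otimes 1$. On the other side I would use only the operator system structure: the slice map $y\mapsto 1\otimes y$ gives a unital complete order embedding $\cV_m\hookrightarrow \cV_n\otimes_{\max}\cV_m$, which composed with the c.o.e.\ $\cV_n\otimes_{\max}\cV_m\hookrightarrow C_e^*(\cV_n\otimes_{\max}\cV_m)$ provides a ucp map $\gamma:\cV_m\to C_e^*(\cV_n\otimes_{\max}\cV_m)$ with $\gamma(v_{k\ell})=1\otimes v_{k\ell}$. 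The ranges of $\pi$ and $\gamma$ commute because $(u_{ij}\otimes 1)(1\otimes v_{k\ell})=(1\otimes v_{k\ell})(u_{ij}\otimes 1)$ already in $\cV_n\otimes \cV_m$, so I can form the ucp map $\pi\cdot\gamma:\cU_{nc}(n)\otimes_c \cV_m\to C_e^*(\cV_n\otimes_{\max}\cV_m)$. Since $\cU_{nc}(n)$ is a unital $C^{\ast}$-algebra, $\cU_{nc}(n)\otimes_c\cV_m=\cU_{nc}(n)\otimes_{\max}\cV_m$ by \cite[Theorem~6.7]{KPTT}, so I view $\pi\cdot\gamma$ as defined on $\cU_{nc}(n)\otimes_{\max}\cV_m$.

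Next I restrict along the c.o.e.\ $\cV_n\otimes_c\cV_m\hookrightarrow\cU_{nc}(n)\otimes_{\max}\cV_m$ supplied by Proposition \ref{vncommutingorderembedding}, obtaining a ucp map $\Phi:\cV_n\otimes_c\cV_m\to C_e^*(\cV_n\otimes_{\max}\cV_m)$ which on simple tensors satisfies $\Phi(x\otimes y)=(x\otimes 1)(1\otimes y)=x\otimes y$. Thus $\Phi$ coincides with the inclusion of $\cV_n\otimes\cV_m$ into $\cV_n\otimes_{\max}\cV_m$ followed by the canonical (c.o.e.) embedding of $\cV_n\otimes_{\max}\cV_m$ into its $C^{\ast}$-envelope. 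Because the latter is a complete order embedding, I conclude that the identity map $\id:\cV_n\otimes_c\cV_m\to\cV_n\otimes_{\max}\cV_m$ is ucp. Since $c\leq\max$ always gives the opposite ucp identity, the two operator systems coincide: $\cV_n\otimes_c\cV_m=\cV_n\otimes_{\max}\cV_m$. This contradicts Proposition \ref{vncnotmax}, so $U_0$ cannot be unitary. Interchanging the roles of $(n,\cV_n)$ and $(m,\cV_m)$ throughout yields the same conclusion for $V_0$.

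I do not expect any genuine obstacle here: the argument is essentially a bootstrap of Proposition \ref{vncommutingorderembedding} combined with the commuting tensor product construction, and the only point requiring a little care is confirming that one really can produce $\gamma$ as a ucp map into the $C^{\ast}$-envelope (which is immediate from the definition of $C_e^{\ast}$) and that the diagram factors through $\cV_n\otimes_{\max}\cV_m$ as required.
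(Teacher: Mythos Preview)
Your overall strategy matches the paper's: from the unitarity assumption, manufacture a ucp map that restricts to the identity $\cV_n\otimes_c\cV_m\to\cV_n\otimes_{\max}\cV_m$ and thereby contradict Proposition~\ref{vncnotmax}. The paper, however, assumes that \emph{both} $U_0$ and $V_0$ are unitary (so, as written, it only establishes that they cannot both be unitary), obtains $*$-homomorphisms $\pi:\cU_{nc}(n)\to C_e^*(\cV_n\otimes_{\max}\cV_m)$ and $\rho:\cU_{nc}(m)\to C_e^*(\cV_n\otimes_{\max}\cV_m)$, and combines them through $\cU_{nc}(n)\otimes_{\max}\cU_{nc}(m)$. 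Your variation---assuming only $U_0$ is unitary and replacing $\rho$ by the ucp slice map $\gamma:\cV_m\to C_e^*(\cV_n\otimes_{\max}\cV_m)$---would, if valid, yield the stronger conclusion that each matrix individually fails to be unitary.

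The step that does not go through as written is the commutation of the ranges of $\pi$ and $\gamma$. Your stated reason, that $(u_{ij}\otimes 1)(1\otimes v_{k\ell})=(1\otimes v_{k\ell})(u_{ij}\otimes 1)$ ``already in $\cV_n\otimes\cV_m$'', is vacuous: $\cV_n\otimes\cV_m$ is only an operator system and carries no multiplication. The products in question live in $C_e^*(\cV_n\otimes_{\max}\cV_m)$, and the content of the preceding corollary is precisely that this $C^*$-envelope is \emph{not} $\cU_{nc}(n)\otimes_{\max}\cU_{nc}(m)$, so no tensor-product algebra structure is available to force $u_{ij}\otimes 1$ and $1\otimes v_{k\ell}$ to commute there. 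The paper's own proof also asserts this commutation without justification, though under the stronger hypothesis that both $U_0$ and $V_0$ are unitary; your argument therefore matches the paper's level of rigor on this point, but your explicit explanation is incorrect, and under your weaker hypothesis the commutation claim is, if anything, less accessible. Without it you cannot form $\pi\cdot\gamma$ as a ucp map on $\cU_{nc}(n)\otimes_c\cV_m$, and the argument halts at that step.
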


\begin{proof}
Suppose that $U_0$ and $V_0$ were unitary.  The entries of $U_0$ $*$-commute with the entries of $V_0$, so there are unital $*$-homomorphisms $\pi:\cU_{nc}(n) \to C_e^*(\cV_n \otimes_{\max} \cV_m)$ and $\rho:\cU_{nc}(m) \to C_e^*(\cV_n \otimes_{\max} \cV_m)$ with $\pi(u_{ij})=u_{ij} \otimes 1$ and $\rho(v_{k\ell})=1 \otimes v_{k\ell}$.  The ranges of $\pi$ and $\rho$ commute, so that $\pi \cdot \rho:\cU_{nc}(n) \otimes_{\max} \cU_{nc}(m) \to C_e^*(\cV_n \otimes_{\max} \cV_m)$ is a $*$-homomorphism.  In particular, $\pi \cdot \rho$ is ucp and $\pi \cdot \rho$ is the identity map when restricted to $\cV_n \otimes \cV_m$.  By Proposition \ref{vncommutingorderembedding}, the inclusion of $\cV_n \otimes \cV_m$ into $\cU_{nc}(n) \otimes_{\max} \cU_{nc}(m)$ is $\cV_n \otimes_c \cV_m$.  Therefore, $\id:\cV_n \otimes_c \cV_m \to \cV_n \otimes_{\max} \cV_m$ is ucp, contradicting Proposition \ref{vncnotmax}.
\end{proof}

\section{Unitary Correlation Sets}

There is a way of stating Connes' embedding Problem in terms of what are known as quantum correlation matrices.  We outline the definition of these sets, as they will motivate the definition of a new collection of correlation sets.  The background for Tsirelson's problem in quantum correlations is motivated by a question in bipartite quantum information theory.  Essentially, there are two models often used for nonlocal quantum correlations: one is a tensor product model, and one is a commuting model.  Tsirelson's problem asks whether these models are the same, up to approximation.  We will formulate the sets of nonlocal quantum correlations in each model below.

If $\cH$ is a Hilbert space, we say that a set of operators $(P_i)_{i=1}^m$ is a \textbf{positive operator-valued measure with $m$ outputs} (POVM) if each $P_i \geq 0$ and $\sum_{i=1}^m P_i=I$.  If the $P_i$'s are also orthogonal projections, then we say that $(P_i)_{i=1}^m$ is a \textbf{projection-valued measure with $m$ outputs} (PVM).  Note that if $(P_i)_{i=1}^m$ is a PVM on $\cH$, then it necessarily follows that $P_i \perp P_j$ for $i \neq j$. For each choice of $n,m \in \bN$, the set of quantum commuting correlation probabilities of two separated systems of $n$ POVM's with $m$ outputs is given by $$C_{qc}(n,m)=\{ (\la P_{a,x} Q_{b,y} \xi,\xi \ra)_{a,b,x,y} \},$$
where for each $a,b \in \{1,...,n\}$, $(P_{a,x})_{x=1}^m$ and $(Q_{b,y})_{y=1}^m$ are POVM's with $m$ outputs, $\cH$ is some Hilbert space, $\xi \in \cH$ is a unit vector, and $P_{a,x}Q_{b,y}=Q_{b,y}P_{a,x}$ for all choices of $a,b,x,y$.  Similarly, we define $$C_q(n,m)=\{ (\la (P_{a,x} \otimes Q_{b,y})\xi,\xi \ra)_{a,b,x,y} \},$$
where each $(P_{a,x})_{x=1}^m$ is a POVM with $m$ outputs on a Hilbert space $\cH_A$, each $(Q_{b,y})_{y=1}^m$ is a POVM with $m$-outputs on a Hilbert space $\cH_B$, $\xi \in \cH_A \otimes \cH_B$ is a unit vector, and $\dim(\cH_A),\dim(\cH_B)<\infty$.  We also define the possibly larger set $C_{qs}(n,m)$ to be the set of all correlations with the same form as for $C_q(n,m)$, except that we allow the Hilbert spaces to be infinite-dimensional.  For convenience, we denote by $C_{qa}(n,m)$ the closure of $C_q(n,m)$.  It is known that $$C_q(n,m) \subseteq  C_{qs}(n,m) \subseteq C_{qa}(n,m) \subseteq C_{qc}(n,m), \, \forall n,m,$$
and $C_{qc}(n,m)$ is closed.  Moreover, each of these sets is convex.  One form of Tsirelson's problem, then, is determining whether $C_{qa}(n,m)=C_{qc}(n,m)$ for all $n,m \geq 2$.  More information on these correlation sets can be found in \cite{tsirelson}, \cite{fritz} and \cite{junge}.  It is shown in \cite{fritz} and \cite{junge} that, in the definitions of $C_q(n,m)$ and $C_{qc}(n,m)$, one may take the POVM's to simply be PVM's.

There is a natural link between the sets $C_{qa}(n,m)$, $C_{qc}(n,m)$ and states on tensor products of the $C^{\ast}$-algebra $C^{\ast}( \ast_n \bZ_m)$, where $\ast_n \bZ_m$ denotes the free product of $n$ copies of the finite cyclic group $\bZ_m$ (see \cite{fritz, junge,ozawa}).  One key fact is that $C^*(\ast_n \bZ_m)$ is isomorphic to $\ast_n \ell_m^{\infty}$, the free product of $n$ copies of $\ell_m^{\infty}$ \cite{fritz,junge,ozawa}.  If $g_x$ denotes the generator of the $x$-th copy of $\bZ_m$ in $C^*(\ast_n \bZ_m)$ and $e_{a,x}$ denotes the generator of the $a$-th coordinate in the $x$-th copy of $\ell_m^{\infty}$, then the isomorphism is implemented via $$g_x \mapsto \sum_{a=1}^m \exp \left( \frac{2\pi a i}{m} \right) e_{a,x}.$$
It follows (see \cite{fritz,junge,ozawa}) that $C_{qa}(n,m)$ is the set of coordinates in $\bR^{n^2m^2}$ given by the images of states $s \in \cS(C^*(*_n \bZ_m) \otimes_{\min} C^*(*_n \bZ_m))$ on the generating set $\{e_{a,x} \otimes e_{b,y}: 1 \leq a,b \leq m, \, 1 \leq x,y \leq n\}$, while $C_{qc}(n,m)$ corresponds to the images of states on $C^*(*_n \bZ_m) \otimes_{\max} C^*(*_n \bZ_m)$.  

For our purposes, we may consider the special case of $m=2$, which involves the $C^{\ast}$-algebra $C^*( \ast_n \mathbb{Z}_2)$.  Following the notation in \cite{FKPT}, we let $h_i$ be the generator of the $i$-th copy of $\bZ_2$ inside of $C^*(\ast_n \mathbb{Z}_2)$.  Each $h_i$ is a self-adjoint unitary.  We let $NC(n)$ be the operator system generated by $\{h_1,...,h_n\}$ inside of $C^*(*_n \bZ_2)$.

\begin{pro}
\emph{(Farenick-Kavruk-Paulsen-Todorov, \cite{FKPT})}
\label{ncboxunivprop}
If $X_1,...,X_n \in \bofh$ are hermitian contractions, then there is a unique ucp map $\gamma:NC(n) \to \bofh$ given by $\gamma(h_i)=X_i$ for all $1 \leq i \leq n$.
\end{pro}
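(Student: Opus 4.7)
The plan is to construct $\gamma$ by a standard dilation-and-compression argument, using the universal property of $C^*(\ast_n \bZ_2)$.

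First, since each $X_i$ is a hermitian contraction on $\cH$, define the Halmos-type dilation
$$U_i = \begin{pmatrix} X_i & \sqrt{I-X_i^2} \\ \sqrt{I-X_i^2} & -X_i \end{pmatrix} \in M_2(\cB(\cH)) = \cB(\cH \oplus \cH).$$
A direct computation (using that $X_i$ commutes with $\sqrt{I-X_i^2}$ via the continuous functional calculus) shows that each $U_i$ is both self-adjoint and unitary, hence a self-adjoint unitary on $\cH \oplus \cH$. By the universal property of the group $C^{\ast}$-algebra $C^{\ast}(\ast_n \bZ_2)$ (each generator $h_i$ is the universal self-adjoint unitary in its coordinate), there exists a unital $\ast$-homomorphism $\pi : C^{\ast}(\ast_n \bZ_2) \to M_2(\cB(\cH))$ with $\pi(h_i) = U_i$ for $1 \leq i \leq n$.

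Next, compress to the $(1,1)$-corner: let $V : \cH \to \cH \oplus \cH$ be the isometry $V\xi = \xi \oplus 0$, and define $\gamma : NC(n) \to \cB(\cH)$ by $\gamma(x) = V^{\ast}\pi(x)V$. Since $\pi$ is a unital $\ast$-homomorphism, $\gamma$ is ucp (it is the compression of a $\ast$-representation, a minimal example of a Stinespring dilation in reverse). Moreover, by inspecting the $(1,1)$-block of $U_i$, we have $\gamma(h_i) = V^{\ast} U_i V = X_i$ for each $i$, and $\gamma(1) = I_{\cH}$.

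For uniqueness, note that $NC(n) = \spn\{1, h_1, \ldots, h_n\}$ since each $h_i$ is already self-adjoint, so any linear map with $\gamma(1) = I$ and $\gamma(h_i) = X_i$ is fully determined on $NC(n)$. In particular, the values on the spanning set force $\gamma$.

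No genuine obstacle arises here; the only subtlety is justifying that $U_i$ really is a self-adjoint unitary, which rests on the functional-calculus identity $X_i \cdot \sqrt{I - X_i^2} = \sqrt{I - X_i^2} \cdot X_i$ and the resulting block computation $U_i^2 = I$. Everything else is routine: the universal property delivers $\pi$, and compression of a $\ast$-homomorphism to a corner is automatically ucp.
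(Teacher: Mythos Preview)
Your argument is correct. Note, however, that the paper does not supply its own proof of this proposition; it is quoted from \cite{FKPT} without proof. Your dilation-and-compression approach is the standard one and is exactly parallel to the argument the paper gives for the analogous universal property of $\cV_n$ in Proposition~\ref{univprop}: there a contraction $T \in M_n(\bofh)$ is dilated to a unitary via the Halmos--Julia dilation, the universal property of $\cU_{nc}(n)$ yields a $*$-homomorphism, and compression to the $(1,1)$-corner produces the desired ucp map. Your proof is the self-adjoint scalar version of that same construction, and is almost certainly what appears in \cite{FKPT} as well.
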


The isomorphism $C^*(\ast_n \bZ_2) \simeq \ast_n \ell^{\infty}_2$ is implemented by the mapping $$h_i \mapsto p_i-q_i,$$
where $p_i$ is the element $(1,0)$ in the $i$-th copy of $\ell_2^{\infty}$, and $q_i$ is the element $(0,1)$ in the $i$-th copy of $\ell_2^{\infty}$. By \cite[Lemma 6.2]{FKPT}, the operator system $NC(n) \otimes_c NC(n)$ is completely order isomorphic to its image inside of $C^*(*_n \bZ_2) \otimes_{\max} C^*(*_n \bZ_2)$. With this information in hand, we easily obtain the following:

\begin{pro}
\label{qaqcorderiso}
For $n \geq 2$, $C_{qa}(n,2)=C_{qc}(n,2)$ if and only if the identity map $\id:NC(n) \otimes_{\min} NC(n) \to NC(n) \otimes_c NC(n)$ is an order isomorphism.
\end{pro}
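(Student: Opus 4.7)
The plan is to identify both $C_{qa}(n,2)$ and $C_{qc}(n,2)$ with the images of the state spaces $S_1(NC(n) \otimes_{\min} NC(n))$ and $S_1(NC(n) \otimes_c NC(n))$ under a common ``evaluate at generators'' map, and then use finite-dimensional Hahn--Banach duality to convert equality of state spaces into equality of the first Archimedean cones.

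Under the isomorphism $C^{\ast}(\ast_n \bZ_2) \simeq \ast_n \ell_2^{\infty}$, the canonical generators of the $\ell^{\infty}$ side correspond to the projections $p_x := (1+h_x)/2$ and $q_x := (1-h_x)/2$, which lie in $NC(n)$.  The $4n^2$ products $\{p_x \otimes p_y, p_x \otimes q_y, q_x \otimes p_y, q_x \otimes q_y\}_{x,y=1}^n$, together with $1 \otimes 1$, span $NC(n) \otimes NC(n)$ as a vector space, so evaluation of any state at these products yields a continuous affine injection $E \colon S_1(NC(n) \otimes_{\tau} NC(n)) \to \bR^{4n^2}$ for either $\tau = \min$ or $\tau = c$.

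Next, I would argue that $E(S_1(NC(n) \otimes_{\min} NC(n))) = C_{qa}(n,2)$ and $E(S_1(NC(n) \otimes_c NC(n))) = C_{qc}(n,2)$.  Injectivity of $\otimes_{\min}$ makes $NC(n) \otimes_{\min} NC(n)$ a complete order subsystem of $C^{\ast}(\ast_n \bZ_2) \otimes_{\min} C^{\ast}(\ast_n \bZ_2)$, so Arveson's extension theorem gives a surjective restriction map between the two state spaces, which matches $E(S_1(NC(n) \otimes_{\min} NC(n)))$ with the known description of $C_{qa}(n,2)$ from \cite{fritz,junge,ozawa} recalled just before the statement.  For the commuting side, \cite[Lemma 6.2]{FKPT} supplies a complete order embedding $NC(n) \otimes_c NC(n) \hookrightarrow C^{\ast}(\ast_n \bZ_2) \otimes_{\max} C^{\ast}(\ast_n \bZ_2)$, and the same Arveson argument, combined with the standard identification of states on a maximal $C^{\ast}$-tensor product as those arising from pairs of commuting $\ast$-representations together with a cyclic vector, produces the second equality.

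With these identifications, the proposition reduces to a finite-dimensional duality statement.  Because $NC(n) \otimes_{\min} NC(n)$ and $NC(n) \otimes_c NC(n)$ share a common underlying vector space and order unit, equality of their state spaces is equivalent to equality of their first Archimedean cones $C_1^{\min}$ and $C_1^c$ (any self-adjoint element outside an Archimedean cone can be separated from it by a state via Hahn--Banach), and since $\min \leq c$ already gives the inclusion $C_1^c \subseteq C_1^{\min}$, this equality is precisely the $1$-positivity of $\id \colon NC(n) \otimes_{\min} NC(n) \to NC(n) \otimes_c NC(n)$, i.e., the statement that the identity is an order isomorphism.  The main obstacle is the commuting-side Arveson step, which hinges on having a genuine complete order embedding from \cite[Lemma 6.2]{FKPT} rather than a mere positive inclusion; once that is granted, both directions of the equivalence follow cleanly.
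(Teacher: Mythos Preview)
Your proposal is correct and follows exactly the route the paper intends: the paper gives no explicit proof of this proposition, merely saying ``With this information in hand, we easily obtain the following,'' where the ``information'' is precisely the state-space description of $C_{qa}(n,2)$ and $C_{qc}(n,2)$ and the complete order embedding $NC(n)\otimes_c NC(n)\hookrightarrow C^{\ast}(\ast_n\bZ_2)\otimes_{\max} C^{\ast}(\ast_n\bZ_2)$ from \cite[Lemma~6.2]{FKPT}. Your final duality step (equality of state spaces $\Leftrightarrow$ equality of first Archimedean cones) is exactly the argument the paper later writes out in full for the analogous Lemma~\ref{utsirelsonorderiso}.
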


\begin{pro}
\label{ncboxfactorsthroughvn}
For any $n \geq 2$, $NC(n)$ is a retract of $\cV_n$.
\end{pro}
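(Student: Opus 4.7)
The plan is to construct ucp maps $\psi:NC(n)\to \cV_n$ and $\chi:\cV_n\to NC(n)$ with $\chi\circ\psi=\id_{NC(n)}$. The strategy parallels Lemma \ref{uncretract}: use the universal property of $\cU_{nc}(n)$ (Proposition \ref{univprop}) to produce $\chi$, and use the universal property of $NC(n)$ (Proposition \ref{ncboxunivprop}) to produce $\psi$. The key design choice is to send the matrix of generators $U=(u_{ij})$ to a diagonal of self-adjoint unitaries in $C^*(\ast_n\bZ_2)$, and to send each $h_i$ to an averaged self-adjoint part of $u_{ii}$.

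For $\chi$: the diagonal $D=\mathrm{diag}(h_1,\dots,h_n)\in M_n(C^*(\ast_n\bZ_2))$ is unitary, because each $h_i$ is a self-adjoint unitary. The universal property of $\cU_{nc}(n)$ therefore yields a unital $*$-homomorphism $\pi:\cU_{nc}(n)\to C^*(\ast_n\bZ_2)$ with $\pi(u_{ij})=\delta_{ij}h_i$. Since $\pi(u_{ii})=h_i$, $\pi(u_{ii}^*)=h_i^*=h_i$, and $\pi(u_{ij})=\pi(u_{ij}^*)=0$ when $i\neq j$, the restriction $\chi:=\pi|_{\cV_n}$ maps into $NC(n)=\spn\{1,h_1,\dots,h_n\}$, and is ucp.

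For $\psi$: set $X_i:=\tfrac{1}{2}(u_{ii}+u_{ii}^*)\in\cV_n$. Each $X_i$ is self-adjoint, and since $u_{ii}$ is a corner of the unitary matrix $U\in M_n(\cU_{nc}(n))$ we have $\|u_{ii}\|\leq 1$, so $X_i$ is a hermitian contraction. Fixing a faithful representation of $\cU_{nc}(n)$ on some $\cH$, Proposition \ref{ncboxunivprop} produces a ucp map $\psi:NC(n)\to\bofh$ with $\psi(h_i)=X_i$. Since $NC(n)=\spn\{1,h_1,\dots,h_n\}$ and the images $1,X_1,\dots,X_n$ all lie in $\cV_n$, we may regard $\psi$ as a ucp map $NC(n)\to\cV_n$. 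Finally, $\chi\circ\psi(1)=1$ and $\chi\circ\psi(h_i)=\chi(X_i)=\tfrac{1}{2}(h_i+h_i^*)=h_i$, so $\chi\circ\psi=\id_{NC(n)}$.

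There is no serious obstacle here once the right target elements are chosen: the whole argument is a direct application of the two universal properties already on the table. The only place requiring a moment's care is to ensure $\psi$ lands inside $\cV_n$ (rather than merely in $\bofh$ or in $\cU_{nc}(n)$) and that $\chi$ recovers the generators $h_i$ exactly; the symmetric combination $X_i=(u_{ii}+u_{ii}^*)/2$ accomplishes both simultaneously thanks to the self-adjointness $h_i^*=h_i$.
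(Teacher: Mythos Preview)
Your proof is correct and takes a genuinely more direct route than the paper. The paper proceeds by transitivity of retracts through the intermediate operator system $\cS_n$: it cites \cite[Proposition~5.7]{FKPT} for ucp maps $\eta:NC(n)\to\cS_n$ and $\theta:\cS_n\to NC(n)$ with $\theta\circ\eta=\id_{NC(n)}$, and then composes with the maps $\psi:\cS_n\to\cV_n$ and $\pi:\cV_n\to\cS_n$ already built in Lemma~\ref{uncretract}. Your argument bypasses $\cS_n$ entirely, constructing $\chi$ and $\psi$ directly from the universal properties of $\cU_{nc}(n)$ and $NC(n)$; the choice $X_i=\tfrac{1}{2}(u_{ii}+u_{ii}^*)$ is exactly what is needed to land in $\cV_n$ and to recover $h_i$ under $\chi$, since $h_i$ is self-adjoint. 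Your approach is more self-contained (no appeal to the external \cite{FKPT} result) and exposes the retraction explicitly; the paper's approach is terser given the machinery already in place and records the useful intermediate fact that the retraction factors through $\cS_n$.
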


\begin{proof}
By \cite[Proposition 5.7]{FKPT}, there are ucp maps $\eta:NC(n) \to \cS_n$ and $\theta:\cS_n \to NC(n)$ such that $\theta \circ \eta=\id_{NC(n)}$. By lemma \ref{uncretract}, there are ucp maps $\psi:\cS_n \to \cV_n$ and $\pi:\cV_n \to \cS_n$ with $\pi \circ \psi=\id_{\cS_n}$.  Then $\psi \circ \eta:NC(n) \to \cV_n$ and $\theta \circ \pi:\cV_n \to NC(n)$ are ucp maps satisfying $(\theta \circ \pi) \circ (\psi \circ \eta)=\id_{NC(n)}$.  We conclude that $NC(n)$ is a retract of $\cV_n$.
\end{proof}

We wish to define correlation matrices with respect to $\cU_{nc}(n)$ that are similar in nature to Tsirelson's correlation sets.  We recall that a $C^{\ast}$-algebra $\cA$ is said to be \textbf{residually finite-dimensional} if there is a family $(\pi_i)_{i \in I}$ of $*$-homomorphisms $\pi_i:\cA \to \cB(\cH_i)$ with each $\dim(\cH_i)<\infty$ such that $\pi:=\bigoplus_{i \in I} \pi_i$ is faithful.  A key component in linking the usual quantum correlation matrices with Kirchberg's conjecture is the fact that $C^*(F_n)$ is RFD for every $n$.  Here, we show that $\cU_{nc}(n)$ also enjoys this property.

\begin{mythe}
\label{uncrfd}
For any $n \geq 2$, $\cU_{nc}(n)$ is RFD.
\end{mythe}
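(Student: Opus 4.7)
The plan is to construct an explicit separating family of finite-dimensional representations of $\cU_{nc}(n)$ using the universal property in Proposition \ref{univprop} applied to compressions of the universal unitary. Represent $\cU_{nc}(n)$ faithfully on a Hilbert space $\cH$, so that $U = (u_{ij}) \in M_n(\cB(\cH))$ is unitary. Fix an increasing net $(P_\alpha)$ of finite-rank projections on $\cH$ with $P_\alpha \nearrow I$ strongly, and set $T_\alpha = (P_\alpha u_{ij} P_\alpha)_{i,j} \in M_n(\cB(P_\alpha \cH))$, which is a contraction because $U$ is a contraction. By Proposition \ref{univprop}, each $T_\alpha$ induces a unital $\ast$-homomorphism $\pi_\alpha : \cU_{nc}(n) \to M_2(\cB(P_\alpha\cH))$, and this is a finite-dimensional representation since $\dim P_\alpha\cH < \infty$. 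The goal is to show that the direct sum $\bigoplus_\alpha \pi_\alpha$ is faithful.

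The heart of the argument is to prove $\liminf_\alpha \|\pi_\alpha(p)\| \geq \|p\|$ for every polynomial $p$ in the generators $u_{ij}$ and their adjoints. Write $\pi_\alpha(u_{ij})$ as a $2 \times 2$ block matrix with diagonal entries $A^\alpha_{ij} = P_\alpha u_{ij} P_\alpha$ and $D^\alpha_{ij} = -P_\alpha u_{ji}^* P_\alpha$, and off-diagonal entries $B^\alpha_{ij}, C^\alpha_{ij}$ equal to the $(i,j)$-blocks of $\sqrt{I - T_\alpha T_\alpha^*}$ and $\sqrt{I - T_\alpha^* T_\alpha}$, respectively. The central estimate is that whenever $\zeta_\alpha \in P_\alpha\cH$ converges in norm to some $\zeta \in \cH$, both $\|B^\alpha_{ij}\zeta_\alpha\|$ and $\|C^\alpha_{ij}\zeta_\alpha\|$ tend to zero; for instance, $\|C^\alpha_{ij}\zeta_\alpha\|^2 \leq \|\zeta_\alpha\|^2 - \|T_\alpha \vec\zeta_\alpha\|^2$, where $\vec\zeta_\alpha$ is the vector in $(P_\alpha\cH)^n$ with $\zeta_\alpha$ in the $j$-th slot and zeros elsewhere, and the right-hand side tends to $\|\zeta\|^2 - \|U\vec\zeta\|^2 = 0$ by the unitarity of $U$ and the strong convergence $P_\alpha \to I$.

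By induction on the degree of $p$, I would show that for each unit vector $\xi \in \cH$, the image $\pi_\alpha(p)(P_\alpha\xi, 0) \in P_\alpha \cH \oplus P_\alpha \cH$ converges in norm to $(p(U,U^*)\xi, 0) \in \cH \oplus \cH$. The inductive step applies $\pi_\alpha(u_{ij})$ (or its adjoint) to a pair $(\zeta_\alpha, \eta_\alpha)$ with $\zeta_\alpha \to p'(U,U^*)\xi$ and $\eta_\alpha \to 0$: the diagonal contributions $A^\alpha_{ij}\zeta_\alpha$ and $D^\alpha_{ij}\eta_\alpha$ converge to $u_{ij} p'(U,U^*)\xi$ and $0$ respectively because $P_\alpha \to I$ strongly, while the off-diagonal contributions vanish by the estimate above. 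Taking a supremum over $\xi$ yields $\liminf_\alpha \|\pi_\alpha(p)\| \geq \|p\|_{\cU_{nc}(n)}$, and a standard three-$\varepsilon$ approximation using density of polynomials extends this to an arbitrary nonzero $a \in \cU_{nc}(n)$, so that $\pi_\alpha(a) \neq 0$ for $\alpha$ sufficiently large.

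The main obstacle is the book-keeping in the induction: each product $\pi_\alpha(u_{i_1 j_1})^{\epsilon_1} \cdots \pi_\alpha(u_{i_\ell j_\ell})^{\epsilon_\ell}(P_\alpha\xi, 0)$ expands as a sum over $2^\ell$ branches indexed by choices of $2 \times 2$ block entry at each factor. One must verify that every branch containing at least one off-diagonal factor vanishes in the limit, which requires repeatedly invoking the key estimate on the norm-convergent intermediate vectors produced by the preceding blocks; the unique surviving branch is the all-diagonal one, and its limit reproduces the compressed product $u_{i_1 j_1}^{\epsilon_1} \cdots u_{i_\ell j_\ell}^{\epsilon_\ell}\xi$.
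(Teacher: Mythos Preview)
Your proposal is correct and follows essentially the same approach as the paper: represent $\cU_{nc}(n)$ faithfully, compress the universal unitary to an increasing net of finite-rank projections, invoke Proposition~\ref{univprop} to obtain finite-dimensional $*$-homomorphisms, and verify that these are asymptotically isometric on $*$-polynomials in the generators. The paper packages the key step more efficiently by observing directly that $\sqrt{I-T_\alpha^*T_\alpha}\to 0$ and $\sqrt{I-T_\alpha T_\alpha^*}\to 0$ in the strong operator topology and then using joint SOT-continuity of multiplication on bounded sets, so that $\pi_\alpha(u_{ij})\to \left(\begin{smallmatrix} u_{ij} & 0 \\ 0 & -u_{ji}^* \end{smallmatrix}\right)$ SOT and hence $\pi_\alpha(p)$ converges SOT for every word $p$; this dispenses with the $2^\ell$-branch bookkeeping you flag as the main obstacle.
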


\begin{proof}
The proof mimics the proof that $C^*(F_n)$ is RFD (see \cite[Theorem 7]{choi}).  It is not hard to see that $\cU_{nc}(n)$ is a separable $C^{\ast}$-algebra, so we may assume that $\cU_{nc}(n) \subseteq \bofh$ is faithfully represented on a separable infinite-dimensional Hilbert space $\cH$.  Hence, there are operators $U_{ij} \in \bofh$ for $1 \leq i,j \leq n$ such that $\cU_{nc}(n) \simeq C^*(\{U_{ij}\}_{i,j})$ via the mapping $u_{ij} \mapsto U_{ij}$.  Let $(P_m)_{m=1}^{\infty}$ be a sequence of increasing projections with $\text{rank}(P_m)=m$ and $SOT$-$\lim_{m \to \infty} P_m=I$.  Define $V_{m,ij}=P_mU_{ij}P_m$ and let $V_m=(V_{m,ij})$.  Since $\text{rank}(P_m)=m$, we may identify $V_{m,ij} \in M_m$ for each $i,j$ and hence $V_m \in M_n(M_m)$.  Observe that $$V_m=\begin{pmatrix} P_m \\ & \ddots \\ & & P_m \end{pmatrix} U \begin{pmatrix} P_m \\ & \ddots \\ & & P_m \end{pmatrix},$$
where $U=(U_{ij})$.  Therefore, each $V_m$ is a contraction. By Proposition \ref{univprop}, there exist unital $*$-homomorphisms $\pi_m:\cU_{nc}(n) \to M_2(M_m)$ for each $m \in \bN$ such that $$X_{m,ij}:=\pi_m(u_{ij})=\begin{pmatrix} V_{m,ij} & (\sqrt{I-V_mV_m^*})_{ij} \\ (\sqrt{I-V_m^*V_m})_{ij} & -V_{m,ji}^* \end{pmatrix}$$ 
for all $i,j$.  Since $V_{m,ij}^*=P_mU_{ij}^* P_m$, $SOT$-$\lim_{m \to \infty} V_m=U$ and $SOT$-$\lim_{m \to \infty} V_m^*=U^*$.  Hence, every entry of $V_m$ converges in SOT, so that $$SOT\text{-}\lim_{m \to \infty} X_m=\begin{pmatrix} U_{ij} & 0 \\ 0 & -U_{ji}^* \end{pmatrix}.$$
Let $F$ be any word in the generators of $\cU_{nc}(n)$.  We similarly obtain $$SOT\text{-}\lim_{m \to \infty} \pi_m(F)=\begin{pmatrix} F & 0 \\ 0 & F(\{-U_{ji}^*,-U_{ji}\}) \end{pmatrix},$$
where $F(\{-U_{ji}^*,-U_{ji}\})$ is the word obtained by replacing every occurrence of $U_{ij}$ with $-U_{ji}^*$, and every occurrence of $U_{ij}^*$ with $-U_{ji}$.  Assume that $F$ is norm $1$.  Then given $\ee>0$, there is $m_0 \in \bN$ such that for all $m \geq m_0$, we have $\|F(\{X_{m,ij},X_{m,ij}^*\})\| \geq 1-\ee$.  Hence, $\pi:=\bigoplus_{m \in \bN} \pi_m$ is isometric on the dense subspace of linear combinations of words in the generators of $\cU_{nc}(n)$.  Since $\pi$ must be continuous, $\pi$ is isometric on $\cU_{nc}(n)$.  This shows that $\pi$ is faithful and $\cU_{nc}(n)$ is RFD.
\end{proof}

\begin{rem}
\label{minrfd}
It is not hard to see that whenever $\cA$ and $\cB$ are RFD $C^{\ast}$-algebras, then $\cA \otimes_{\min} \cB$ is also RFD.  Hence, $\cU_{nc}(n) \otimes_{\min} \cU_{nc}(k)$ is RFD for every $n,k \geq 2$.
\end{rem}

As with $C^*(F_n)$, we can reformulate Kirchberg's conjecture in terms of whether or not $\cU_{nc}(n) \otimes_{\max} \cU_{nc}(n)$ is RFD.  The proof is identical to the $C^*(F_n)$ case \cite[Proposition 7.4.4]{brownozawa} and is omitted.

\begin{mythe}
\label{uncmaxrfdconjecture}
The following statements are equivalent.
\begin{enumerate}
\item
(Kirchberg's Conjecture) $C^*(F_n) \otimes_{\min} C^*(F_n)=C^*(F_n) \otimes_{\max} C^*(F_n)$ for all/some $n \geq 2$.
\item
$\cU_{nc}(n) \otimes_{\min} \cU_{nc}(n)=\cU_{nc}(n) \otimes_{\max} \cU_{nc}(n)$ for all/some $n \geq 2$.
\item
$\cU_{nc}(n) \otimes_{\max} \cU_{nc}(n)$ is RFD for all/some $n \geq 2$.
\end{enumerate}
\end{mythe}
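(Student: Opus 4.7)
The plan is to observe that (1) $\Leftrightarrow$ (2) is already established in Theorem \ref{uncequivalenttokirchberg}, so only the equivalence (2) $\Leftrightarrow$ (3) needs a proof. Both directions rest on Theorem \ref{uncrfd} (that $\cU_{nc}(n)$ is RFD) together with Remark \ref{minrfd} and the standard fact that finite-dimensional $C^{\ast}$-algebras are nuclear.

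The implication (2) $\Rightarrow$ (3) should be immediate: Theorem \ref{uncrfd} gives that $\cU_{nc}(n)$ is RFD, so by Remark \ref{minrfd} the algebra $\cU_{nc}(n) \otimes_{\min} \cU_{nc}(n)$ is RFD, and under hypothesis (2) this coincides with $\cU_{nc}(n) \otimes_{\max} \cU_{nc}(n)$.

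For the harder direction (3) $\Rightarrow$ (2), the plan is to show directly that $\|x\|_{\max} \leq \|x\|_{\min}$ for every $x$ in the algebraic tensor product $\cU_{nc}(n) \otimes \cU_{nc}(n)$, the reverse inequality being automatic. Assuming (3), one picks a separating family $\{\pi_\lambda : \cU_{nc}(n) \otimes_{\max} \cU_{nc}(n) \to M_{d_\lambda}\}_{\lambda \in \Lambda}$ of finite-dimensional $\ast$-representations, so that $\|x\|_{\max} = \sup_\lambda \|\pi_\lambda(x)\|$. By the universal property of the maximal tensor product applied to the finite-dimensional target, each $\pi_\lambda$ decomposes as $\pi_\lambda(a \otimes b) = \pi_\lambda^A(a) \pi_\lambda^B(b)$ for a pair of $\ast$-homomorphisms $\pi_\lambda^A, \pi_\lambda^B : \cU_{nc}(n) \to M_{d_\lambda}$ with commuting ranges. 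Writing $A_\lambda = \pi_\lambda^A(\cU_{nc}(n))$ and $B_\lambda = \pi_\lambda^B(\cU_{nc}(n))$, the key factorization is
\[
\cU_{nc}(n) \otimes_{\min} \cU_{nc}(n) \xrightarrow{\pi_\lambda^A \otimes \pi_\lambda^B} A_\lambda \otimes_{\min} B_\lambda = A_\lambda \otimes_{\max} B_\lambda \xrightarrow{m} M_{d_\lambda},
\]
where the first arrow is the $\ast$-homomorphism induced on the minimal tensor product by functoriality of $\otimes_{\min}$ under $\ast$-homomorphisms, the middle equality is nuclearity of the finite-dimensional images $A_\lambda$ and $B_\lambda$, and $m$ is multiplication, which is a $\ast$-homomorphism because $A_\lambda$ and $B_\lambda$ commute in $M_{d_\lambda}$. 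The composition is a $\ast$-homomorphism on $\cU_{nc}(n) \otimes_{\min} \cU_{nc}(n)$ that agrees with $\pi_\lambda$ on the algebraic tensor product, so $\|\pi_\lambda(x)\| \leq \|x\|_{\min}$ for every such $x$, and taking the supremum over $\lambda$ yields $\|x\|_{\max} \leq \|x\|_{\min}$.

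No step appears to present a genuine obstacle: the three ingredients needed are the decomposition of a representation of the maximal tensor product into a pair of commuting representations (the universal property of $\otimes_{\max}$), functoriality of $\otimes_{\min}$ under $\ast$-homomorphisms, and nuclearity of finite-dimensional $C^\ast$-algebras. All three are standard, which is presumably why the authors feel comfortable referring the reader to the analogous argument for $C^\ast(F_n)$ in Brown--Ozawa.
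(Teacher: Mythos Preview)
Your proposal is correct and matches the approach the paper intends: the paper omits the proof entirely, stating only that it is identical to the $C^*(F_n)$ case in \cite[Proposition 7.4.4]{brownozawa}, and what you have written is precisely that argument (factor each finite-dimensional representation of the max tensor product through the min tensor product via nuclearity of finite-dimensional $C^*$-algebras), together with the observation that (1) $\Leftrightarrow$ (2) is already handled by Theorems \ref{conditionforkirchberg} and \ref{uncequivalenttokirchberg}.
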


We will show below that (3) holds if we weaken the assumption of residual finite-dimensionality to being quasidiagonal.  Recall that a $C^{\ast}$-algebra $\cA$ is \textbf{quasidiagonal}, or \textbf{QD}, if there is a net of ucp maps $\varphi_{\lambda}:\cA \to M_{k(\lambda)}$ such that $\lim_{\lambda} \|\varphi_{\lambda}(a)\|=\|a\|$ and $\lim_{\lambda} \|\varphi_{\lambda}(ab)-\varphi_{\lambda}(a)\varphi_{\lambda}(b)\|=0$ for all $a,b \in \cA$.  It is easy to see that every RFD $C^{\ast}$-algebra is QD.

\begin{mythe}
\label{uncmaxqd}
For every $n \geq 2$, $\cU_{nc}(n) \otimes_{\max} \cU_{nc}(n)$ is QD.
\end{mythe}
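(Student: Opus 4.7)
The plan is to exhibit, for every finite set $F \subseteq \cU_{nc}(n) \otimes_{\max} \cU_{nc}(n)$ and every $\ee > 0$, a ucp map $\Phi \colon \cU_{nc}(n) \otimes_{\max} \cU_{nc}(n) \to M_N$ that is $\ee$-isometric and $\ee$-multiplicative on $F$; this is the standard working characterization of quasidiagonality. The strategy will combine the compression argument that powered the proof of Theorem \ref{uncrfd} with the LLP of $\cU_{nc}(n)$ provided by Theorem \ref{unckirchberg}, now applied simultaneously to both tensor factors.

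First, I would fix a faithful unital representation $\cU_{nc}(n) \otimes_{\max} \cU_{nc}(n) \subseteq \bofh$ with $\cH$ separable. The universal property of $\otimes_{\max}$ splits this as two commuting unital $*$-homomorphisms $\pi_L,\pi_R \colon \cU_{nc}(n) \to \bofh$, and the matrices $U = (\pi_L(u_{ij}))$ and $V = (\pi_R(u_{ij}))$ in $M_n(\bofh)$ are unitaries with pairwise commuting entries in $\bofh$. Choose an increasing sequence of finite-rank projections $P_m \to I$ strongly, and compress as in Theorem \ref{uncrfd}: the matrices $U_m := (P_m \pi_L(u_{ij}) P_m)$ and $V_m := (P_m \pi_R(u_{ij}) P_m)$ are contractions in $M_n(\bofh)$, and Proposition \ref{univprop} produces unital $*$-homomorphisms $\sigma_L^m,\sigma_R^m \colon \cU_{nc}(n) \to M_2(M_{r_m})$ dilating them, where $r_m = \text{rank}(P_m)$. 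Strong convergence $P_m \to I$, together with the entrywise commutation of $U$ and $V$, forces $[U_m, V_m] \to 0$ entrywise as $m \to \infty$, so the ranges of $\sigma_L^m$ and $\sigma_R^m$ asymptotically commute in $M_2(M_{r_m})$.

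The main obstacle is that $\sigma_L^m$ and $\sigma_R^m$ do not have \emph{exactly} commuting ranges at any finite stage, so they do not assemble into a ucp map on the max tensor product on the nose. To upgrade approximate commutation to exact commutation at the finite-dimensional level, I would invoke the operator-system LLP carried by $\cV_n \subseteq \cU_{nc}(n)$ (Theorem \ref{osllp} and Lemma \ref{bofhpropertyv}) in tandem with Theorem \ref{unckirchberg}: thinking of the pair $(\sigma_L^m,\sigma_R^m)$ as defining a ucp map out of $\cU_{nc}(n) \otimes_{\max} \cU_{nc}(n)$ into the quotient of $\prod_m M_2(M_{r_m})$ by the ideal of $o(1)$ corrections, LLP provides local liftings whose two factors have honestly commuting ranges at each finite level. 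Assembling these lifted pairs yields ucp maps $\Phi_m \colon \cU_{nc}(n) \otimes_{\max} \cU_{nc}(n) \to M_{N_m}$ that are asymptotically isometric on $F$ (from $P_m \to I$ strongly, exactly as at the end of the proof of Theorem \ref{uncrfd}) and asymptotically multiplicative on $F$ (from the LLP-enforced exact commutation), which is precisely what quasidiagonality requires. The delicate point throughout is ensuring that the LLP lifting can be done uniformly enough in $m$ to make both errors tend to zero simultaneously.
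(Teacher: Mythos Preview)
Your approach has a genuine gap at the LLP-lifting step, and it is not a mere technicality. First, the commutators of the compressed blocks go to zero only strongly, not in norm: writing $Q_m = I - P_m^{(n)}$ and using $UV=VU$, one has $U_mV_m - V_mU_m = P_m^{(n)} U Q_m V P_m^{(n)} - P_m^{(n)} V Q_m U P_m^{(n)}$, and there is no reason for $\|U Q_m V P_m^{(n)}\|$ to tend to zero. Hence the images of $\sigma_L^m$ and $\sigma_R^m$ in the norm corona $\prod_m M_2(M_{r_m})\big/ \bigoplus_m M_2(M_{r_m})$ need not commute, and the pair does not assemble into a $*$-homomorphism out of $\cU_{nc}(n)\otimes_{\max}\cU_{nc}(n)$ into that quotient.

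Second, even if one had such a map into a quotient, the LLP you have at your disposal is that of each factor $\cU_{nc}(n)$ (Theorem \ref{unckirchberg}), not of $\cU_{nc}(n)\otimes_{\max}\cU_{nc}(n)$. Lifting each factor separately produces ucp lifts $\widetilde{\sigma}_L^m,\widetilde{\sigma}_R^m$, but nothing forces these lifts to have commuting ranges at any finite level, so they do not combine to a ucp map on the max tensor product. A local lift of the product map would require the LLP of $\cU_{nc}(n)\otimes_{\max}\cU_{nc}(n)$ itself, which is unknown and is essentially entangled with Kirchberg's conjecture.

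The paper sidesteps both obstructions by a completely different, soft argument: it fixes a faithful representation, realizes the two universal unitaries $U,V\in M_n(\bofh)$, and uses the Borel functional calculus to choose norm-continuous paths of unitaries $u(t)\in W^*(U)$ and $v(t)\in W^*(V)$ from the identity to $U$ and to $V$. Since $W^*(U)$ and $W^*(V)$ commute, the entries of $u(t)$ and $v(t)$ commute for every $t$, so one obtains an honest homotopy of $*$-representations $\pi_t$ from the trivial representation to the faithful one; quasidiagonality then follows from the homotopy invariance of QD \cite[Proposition 7.3.5]{brownozawa}. The crucial difference is that the paper never leaves the realm of \emph{exactly} commuting pairs, whereas your compression scheme destroys commutation and has no mechanism to recover it.
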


\begin{proof}
The proof is similar to the proof for $C^*(F_n) \otimes_{\max} C^*(F_n)$ (see \cite[Proposition 7.4.5]{brownozawa}).  Let $\pi:\cU_{nc}(n) \otimes_{\max} \cU_{nc}(n)$ be a faithful representation on a Hilbert space $\cH$.  Let $U=(U_{ij})$ be the matrix of generators of $\cU_{nc}(n) \otimes 1$, and let $V=(V_{ij})$ be the matrix of generators of $1 \otimes \cU_{nc}(n)$, so that each $U_{ij},V_{ij} \in \bofh$.  The nature of the max tensor product forces the $U_{ij}$'s and $V_{k\ell}$'s to $*$-commute.  The unitary group of $\cB(\cH^{(n)})$ is path connected by the Borel functional calculus.  Hence, there are norm-continuous functions $u,v:[0,1] \to \cB(\cH^{(n)})$ such that $u(0)=\cI_{\cH^{(n)}}=v(0)$, $u(1)=U$ and $v(1)=V$.  Since $UV=VU$ and $UV^*=V^*U$, the von Neumann algebras $W^*(U)$ and $W^*(V)$ must commute with each other.  Using the Borel functional calculus, we can arrange to have $u(t) \in W^*(U)$ and $v(t) \in W^*(V)$ for all $t \in [0,1]$.  The entries of $U$ and $V$ $*$-commute, so this must also hold for the entries of $p(U,U^*)$ and $q(V,V^*)$ for any $*$-polynomials $p,q$.  Taking limits, one sees that the entries of $u(t) \in \cB(\cH^{(n)})$ must $*$-commute with the entries of $v(t) \in \cB(\cH^{(n)})$ for all $t \in [0,1]$.  Since $u(t)$ and $v(t)$ are unitary with $*$-commuting entries, there is a unique $*$-homomorphism $\pi_t:\cU_{nc}(n) \otimes_{\max} \cU_{nc}(n) \to \bofh$ with $\pi_t(u_{ij} \otimes 1)=(u(t))_{ij}$ and $\pi_t(1 \otimes v_{ij})=(v(t))_{ij}$.  As $\pi_0$ is the trivial representation onto $\bC I_{\cH}$ and $\pi_1=\pi$, we see that $\pi$ is homotopic to the trivial representation.  Since $\pi$ is injective and $\bC$ is obviously QD, by \cite[Proposition 7.3.5]{brownozawa}, $\cU_{nc}(n) \otimes_{\max} \cU_{nc}(n)$ is QD.
\end{proof}

To obtain a unitary version of Tsirelson's problem that is equivalent to Kirchberg's conjecture, it is helpful to have a characterization of RFD $C^{\ast}$-algebras in terms of their state spaces.  By way of notation, we denote by $\cS(\cA)$ the set of all states on a unital $C^{\ast}$-algebra $\cA$.  We define $\text{Fin}(\cA)$ to be the set of all states on $\cA$ whose GNS representations act on finite-dimensional Hilbert spaces.  While a number of characterizations for residual finite-dimensionality are given in \cite{EL}, we only require the following one.

\begin{mythe}
\emph{(Exel-Loring \cite{EL})}
\label{EL}
A unital $C^{\ast}$-algebra $\cA$ is RFD if and only if $\text{Fin}(\cA)$ is $w^*$-dense in $\cS(\cA)$.
\end{mythe}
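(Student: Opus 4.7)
The plan is to prove both directions by a direct state-space argument, with the key technical point being that $\text{Fin}(\cA)$ is closed under convex combinations.

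For the backward direction, assume $\text{Fin}(\cA)$ is $w^*$-dense in $\cS(\cA)$. I would form the direct sum representation $\pi = \bigoplus_{\phi \in \text{Fin}(\cA)} \pi_\phi$, where $\pi_\phi$ is the GNS representation of $\phi$ on its (finite-dimensional) GNS Hilbert space $\cH_\phi$ with cyclic vector $\xi_\phi$. If $a \in \cA$ satisfies $\pi(a) = 0$, then in particular $\pi_\phi(a^*a) = 0$ for every $\phi \in \text{Fin}(\cA)$, so $\phi(a^*a) = \langle \pi_\phi(a^*a)\xi_\phi, \xi_\phi\rangle = 0$ for every such $\phi$. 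By $w^*$-density, $\phi(a^*a) = 0$ for all $\phi \in \cS(\cA)$, and since states separate positive elements of $\cA$, we get $a^*a = 0$, hence $a = 0$. Thus $\pi$ is faithful and $\cA$ is RFD.

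For the forward direction, suppose $\pi = \bigoplus_{i \in I} \pi_i : \cA \to \cB(\cH)$ is faithful with each $\cH_i$ finite-dimensional, where $\cH = \bigoplus_i \cH_i$. I first observe that $\text{Fin}(\cA)$ is closed under finite convex combinations: if $\phi_1, \ldots, \phi_k \in \text{Fin}(\cA)$ with $t_j \geq 0$, $\sum t_j = 1$, then the GNS representation of $\sum t_j \phi_j$ is cyclic on a subspace of the finite-dimensional space $\bigoplus_j \cH_{\phi_j}$ generated by $(\sqrt{t_j}\,\xi_{\phi_j})_j$. Second, for any unit vector $\xi \in \cH$ whose support in the direct sum is finite, say $\xi = \sum_{i \in F} \xi_i$ with $F$ finite, the vector state $\omega_\xi \circ \pi$ decomposes as $\sum_{i \in F} \|\xi_i\|^2 \,(\omega_{\xi_i/\|\xi_i\|} \circ \pi_i)$, a convex combination of vector states coming from finite-dimensional representations, hence a state in $\text{Fin}(\cA)$.

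It remains to show that these finitely-supported vector states are $w^*$-dense in $\cS(\cA)$. Vectors of finite support are norm-dense in the unit sphere of $\cH$, and $\xi \mapsto \omega_\xi$ is norm-continuous on the unit sphere (into the state space with any topology weaker than norm), so finitely-supported vector states are $w^*$-dense in all vector states of $\pi$. By faithfulness of $\pi$, the $w^*$-closed convex hull of the vector states of $\pi$ equals $\cS(\cA)$ (a standard consequence of the fact that $\pi(\cA)$ separates points of the predual of $\pi(\cA)''$, combined with Hahn-Banach). Combining this with the convexity of $\text{Fin}(\cA)$ noted above, we conclude that $\text{Fin}(\cA)$ is $w^*$-dense in $\cS(\cA)$.

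The main subtlety is the forward direction: one must resist the temptation to take arbitrary vector states of $\pi$, since a generic $\xi \in \bigoplus_i \cH_i$ may have infinite support and the cyclic subspace it generates need not be finite-dimensional. Restricting to finitely-supported vectors and then using $w^*$-density of these vector states together with the convex stability of $\text{Fin}(\cA)$ is the crucial organizational point.
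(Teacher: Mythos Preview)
The paper does not give a proof of this theorem; it is quoted from Exel--Loring \cite{EL} and used as a black box in the subsequent discussion of unitary correlation sets. Your argument is correct and essentially recovers the standard proof. The backward direction is routine, and in the forward direction your two observations---that $\text{Fin}(\cA)$ is convex, and that finitely-supported vector states of a faithful direct-sum representation lie in $\text{Fin}(\cA)$---are exactly the right ingredients. One small point: the justification that the $w^*$-closed convex hull of the vector states of a faithful representation exhausts $\cS(\cA)$ is most cleanly phrased by Hahn--Banach separation in the dual pair $(\cA,\cA^*)$ together with the equality $\sup_{\|\xi\|=1}\langle \pi(a)\xi,\xi\rangle=\sup\,\sigma(a)=\sup_{\psi\in\cS(\cA)}\psi(a)$ for self-adjoint $a$, rather than via the predual of $\pi(\cA)''$ as you suggest; but either route works.
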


We are now in a position to define our unitary correlation sets.  As in the usual setting, we will consider a \textit{tensor product} model as well as a \textit{commuting} model.  For $n \geq 2$ and a unitary $U=(U_{ij}) \in M_n(\bofh)$ for some Hilbert space $\cH$, we let $\mathfrak{B}_n(U)=\{I_{\cH}\} \cup \{U_{ij},U_{ij}^*\}_{i,j=1}^n$.  We define $UC_q(n_1,n_2)$ to be the set of all $(2n_1^2+1)(2n_2^2+1)$-tuples of the form $$(\la X \otimes Y)\xi,\xi \ra)_{X \in \mathfrak{B}_{n_1}(U), \, Y \in \mathfrak{B}_{n_2}(V)},$$
where $U \in M_{n_1}(\cB(\cH_A))$ and $V \in M_{n_2}(\cB(\cH_B))$, $\cH_A$ and $\cH_B$ are finite-dimensional Hilbert spaces, and $\xi \in \cH_A \otimes \cH_B$ is a unit vector.  We define the possibly larger set $UC_{qs}(n_1,n_2)$ to be the set of all correlations of the same form as for $UC_q(n_1,n_2)$, except that we allow the Hilbert spaces to be infinite-dimensional.  For convenience, we will also define $UC_{qa}(n_1,n_2)$ to be the closure of $UC_q(n_1,n_2)$.  For the commuting unitary correlation sets, we define $UC_{qc}(n_1,n_2)$ to be the set of all $(2n_1^2+1)(2n_2^2+1)$-tuples of the form $$(\la XY\xi,\xi \ra)_{X \in \mathfrak{B}_{n_1}(U), \, Y \in \mathfrak{B}_{n_2}(V)},$$
where $U \in M_{n_1}(\bofh)$ and $V \in M_{n_2}(\bofh)$ are unitaries, $\cH$ is a Hilbert space, $\xi \in \cH$ is a unit vector, and $XY=YX$ for all $X \in \mathfrak{B}_{n_1}(U)$ and $Y \in \mathfrak{B}_{n_2}(V)$.  Since $U$ and $V$ are commuting unitaries, it follows that the $U_{ij}$'s and $V_{k\ell}$'s $*$-commute.  For convenience, we denote by $\mathcal{G}_{n_1,n_2}$ the set of generators of $\cV_{n_1} \otimes \cV_{n_2}$ of the form $x \otimes y$, where $x \in \{1\} \cup \{u_{ij},u_{ij}^*\}_{i,j=1}^{n_1}$ and $y \in \{1\} \cup \{v_{k\ell},v_{k\ell}^*\}_{k,\ell=1}^{n_2}$.  By the correspondence between GNS representations and states, $$UC_{qc}(n_1,n_2)=\{ (s(x))_{x \in \cG_{n_1,n_2}}: s \in \cS(\cU_{nc}(n_1) \otimes_{\max} \cU_{nc}(n_2)) \}.$$
By Proposition \ref{vncommutingorderembedding}, the inclusion $\cV_{n_1} \otimes_c \cV_{n_2} \subseteq \cU_{nc}(n_1) \otimes_{\max} \cU_{nc}(n_2)$ is a complete order embedding.  Therefore, we may also write $$UC_{qc}(n_1,n_2)=\{(s(x))_{x \in \cG_{n_1,n_2}}: s \in \cS(\cV_{n_1} \otimes_c \cV_{n_2})\}.$$
It is not hard to see that $$UC_q(n_1,n_2)=\{(s(x))_{x \in \cG_{n_1,n_2}}: s \in \text{Fin}(\cU_{nc}(n_1) \otimes_{\min} \cU_{nc}(n_2))\}.$$

These unitary correlation sets have similar properties to the quantum correlation sets.

\begin{pro}
\label{uccontainments}
For every $n_1,n_2 \geq 2$, $$UC_q(n_1,n_2) \subseteq UC_{qs}(n_1,n_2) \subseteq UC_{qa}(n_1,n_2) \subseteq UC_{qc}(n_1,n_2),$$ and each of these sets is convex.  Moreover, $UC_{qc}(n_1,n_2)$ is closed.
\end{pro}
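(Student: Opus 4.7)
The plan is to handle the easier claims first and reserve most of the work for the middle inclusion $UC_{qs}(n_1,n_2) \subseteq UC_{qa}(n_1,n_2)$.  The containment $UC_q(n_1,n_2) \subseteq UC_{qs}(n_1,n_2)$ is tautological, since a finite-dimensional realization is automatically an infinite-dimensional one.  Closedness of $UC_{qc}(n_1,n_2)$ follows from the state-space description $UC_{qc}(n_1,n_2)=\{(s(x))_{x\in\cG_{n_1,n_2}}:s\in\cS(\cV_{n_1}\otimes_c\cV_{n_2})\}$ noted just above the statement: the state space is $w^*$-compact by Banach--Alaoglu, the evaluation map $s\mapsto (s(x))_{x\in\cG_{n_1,n_2}}$ is $w^*$-continuous, and the target $\bC^{(2n_1^2+1)(2n_2^2+1)}$ is Hausdorff, so the image is compact.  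Granting this, the third inclusion $UC_{qa}(n_1,n_2)\subseteq UC_{qc}(n_1,n_2)$ reduces to checking $UC_q(n_1,n_2)\subseteq UC_{qc}(n_1,n_2)$: for a tensor realization with unitaries $U\in M_{n_1}(\cB(\cH_A))$, $V\in M_{n_2}(\cB(\cH_B))$ and vector $\xi\in \cH_A\otimes \cH_B$, the amplifications $U\otimes I_{\cH_B}$ and $I_{\cH_A}\otimes V$ are unitaries on $\cH_A\otimes \cH_B$ with $*$-commuting entries yielding the same expectation values at $\xi$.  Convexity of all four sets reduces to the standard direct-sum construction: given realizations $(\cH_{A,i},\cH_{B,i},U^{(i)},V^{(i)},\xi_i)$ and $t\in [0,1]$, take $U^{(1)}\oplus U^{(2)}$ and $V^{(1)}\oplus V^{(2)}$ on the direct-sum Hilbert spaces and the unit vector $\sqrt{t}\,\xi_1\oplus \sqrt{1-t}\,\xi_2$ inside $(\cH_{A,1}\oplus \cH_{A,2})\otimes (\cH_{B,1}\oplus \cH_{B,2})$; orthogonality of the cross subspaces kills the unwanted cross-terms.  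The same construction preserves the commuting relation needed for $UC_{qc}$, and $UC_{qa}$ inherits convexity as the closure of a convex set.

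The main step is $UC_{qs}(n_1,n_2) \subseteq UC_{qa}(n_1,n_2)$, i.e., approximating an arbitrary tensor-product realization by finite-dimensional ones.  Given a unit vector $\xi\in \cH_A\otimes \cH_B$ and unitaries $U,V$, fix orthonormal bases of $\cH_A,\cH_B$ and let $P_A^N,P_B^N$ be the projections onto the first $N$ basis vectors.  Set $\xi_N=(P_A^N\otimes P_B^N)\xi/\|(P_A^N\otimes P_B^N)\xi\|$; then $\xi_N\to \xi$ in norm, so each entry of the correlation tuple at $\xi_N$ converges to its value at $\xi$.  Because $\xi_N$ lies in $\ran(P_A^N)\otimes \ran(P_B^N)$, those expectations equal $\la (T^{(N)}_{ij}\otimes S^{(N)}_{k\ell})\xi_N,\xi_N\ra$, where $T^{(N)}=P_A^N U P_A^N$ and $S^{(N)}=P_B^N V P_B^N$ are finite-dimensional contractions.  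Applying Proposition \ref{univprop} to these contractions yields unitary dilations $W^{(N)}\in M_{n_1}(\cB(\ran(P_A^N)\oplus \ran(P_A^N)))$ and $Z^{(N)}\in M_{n_2}(\cB(\ran(P_B^N)\oplus \ran(P_B^N)))$ whose $(1,1)$-corners recover $T^{(N)}_{ij}$, $T^{(N)*}_{ij}$, $S^{(N)}_{k\ell}$, $S^{(N)*}_{k\ell}$ in $W^{(N)}_{ij}$, $W^{(N)*}_{ij}$, $Z^{(N)}_{k\ell}$, $Z^{(N)*}_{k\ell}$ respectively.  Embedding $\xi_N$ into the $(1,1)$-corner of the tensor product $(\ran(P_A^N)\oplus \ran(P_A^N))\otimes (\ran(P_B^N)\oplus \ran(P_B^N))$ then produces an element of $UC_q(n_1,n_2)$ whose correlation entries coincide with the values at $\xi_N$.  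Letting $N\to\infty$ exhibits the original tuple as a norm limit of elements of $UC_q(n_1,n_2)$.

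The main obstacle is converting an infinite-dimensional tensor-product realization into a norm-approximating sequence of finite-dimensional ones, because the naive compression of $U$ and $V$ by $P_A^N$ and $P_B^N$ destroys unitarity.  The crucial ingredient is the explicit Halmos-type dilation of Proposition \ref{univprop}, which restores unitarity while placing both the compression $T^{(N)}_{ij}$ and its adjoint $T^{(N)*}_{ij}$ in the $(1,1)$-corners of $W^{(N)}_{ij}$ and $W^{(N)*}_{ij}$, thereby matching every entry of the correlation tuple in one stroke.  The remaining claims are essentially formal consequences of Banach--Alaoglu and direct-sum manipulations, so no further technical difficulty is anticipated.
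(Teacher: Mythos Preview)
Your argument is correct, but it diverges from the paper's proof in two places.

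For convexity, the paper simply observes that each set is the image of a (convex) state space under a linear evaluation map and leaves it at that; you instead give the explicit direct-sum construction on the Hilbert-space side.  Your version is more concrete and arguably more honest for $UC_q$ and $UC_{qs}$, where the paper's state-space description is less direct.

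The substantive difference is in the inclusion $UC_{qs}(n_1,n_2)\subseteq UC_{qa}(n_1,n_2)$.  The paper does not approximate on the Hilbert-space side at all: it notes that any element of $UC_{qs}$ is the restriction of a state on $\cV_{n_1}\otimes_{\min}\cV_{n_2}$, extends that state to $\cU_{nc}(n_1)\otimes_{\min}\cU_{nc}(n_2)$ by Hahn--Banach, and then invokes the fact (established earlier via Theorem~\ref{uncrfd} and Remark~\ref{minrfd}) that this $C^*$-algebra is RFD, so by the Exel--Loring theorem every state is a $w^*$-limit of states with finite-dimensional GNS representation.  Your route---compress by finite-rank projections, then use the Halmos-type dilation of Proposition~\ref{univprop} to restore unitarity---is more elementary and entirely self-contained, avoiding both the RFD theorem and Exel--Loring.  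In effect you are reproving, for this particular purpose, the approximation content of Theorem~\ref{uncrfd}.  The paper's approach is shorter because that machinery is already in place and is reused in Lemma~\ref{ucqa}; your approach has the advantage of making the finite-dimensional approximants completely explicit.  One minor point: your sequence $(P_A^N,P_B^N)$ tacitly assumes separable Hilbert spaces, but this is harmless since one may pass to nets or first restrict to the separable reducing data determined by $\xi$.
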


\begin{proof}
Since the state space of any operator system is convex, it is easy to see that each set above is convex.  Clearly $UC_q(n_1,n_2) \subseteq UC_{qs}(n_1,n_2)$.  Every element of $UC_{qs}(n_1,n_2)$ corresponds to a state on $\cV_{n_1} \otimes_{\min} \cV_{n_2}$, which extends to a state on $\cU_{nc}(n_1) \otimes_{\min} \cU_{nc}(n_2)$ by the Hahn-Banach theorem.  By Theorem \ref{EL}, the set $\text{Fin}(\cU_{nc}(n_1) \otimes_{\min} \cU_{nc}(n_2))$ is $w^*$-dense in $\cS(\cU_{nc}(n_1) \otimes_{\min} \cU_{nc}(n_2))$, so that each element of $UC_{qs}(n_1,n_2)$ is also in $UC_{qa}(n_1,n_2)$.  To show that $UC_{qa}(n_1,n_2) \subseteq UC_{qc}(n_1,n_2)$, it suffices to show that $UC_{qc}(n_1,n_2)$ is closed.  To that end, let $((s_p(x)_{x \in \cG_{n_1,n_2}})_{p=1}^{\infty}$ be a sequence in $UC_{qc}(n_1,n_2)$ that converges, where $(s_p)_{p=1}^{\infty} \subseteq \cS(\cV_{n_1} \otimes_c \cV_{n_2})$.  The mapping $s:\cV_{n_1} \otimes_c \cV_{n_2} \to \bC$ given by $s(x)=\lim_{p \to \infty} s_p(x)$ for all $x \in \cG_{n_1,n_2}$ extends to a linear functional.  It follows that $s=w^*$-$\lim_{p \to \infty} s_p$.  Since the state space on an operator system is $w^*$-closed, we see that $s \in \cS(\cV_{n_1} \otimes_c \cV_{n_2})$ so that $UC_{qc}(n_1,n_2)$ is closed.
\end{proof}

Before we link these unitary correlation sets to Connes' embedding problem, it will be helpful to have a better description of $UC_{qa}(n_1,n_2)$.

\begin{lem}
\label{ucqa}
For each $n_1,n_2 \geq 2$, $$UC_{qa}(n_1,n_2)=\{(s(x))_{x \in \cG_{n_1,n_2}}: s \in \cS(\cV_{n_1} \otimes_{\min} \cV_{n_2})\}.$$
\end{lem}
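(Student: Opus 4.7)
The plan is to prove both inclusions, using that the minimal tensor product is injective on one side and that $\cU_{nc}(n_1) \otimes_{\min} \cU_{nc}(n_2)$ is RFD on the other.

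Write $R_{n_1,n_2} = \{(s(x))_{x \in \cG_{n_1,n_2}}: s \in \cS(\cV_{n_1} \otimes_{\min} \cV_{n_2})\}$ for the set on the right-hand side. First I would observe that $R_{n_1,n_2}$ is closed in $\bC^{(2n_1^2+1)(2n_2^2+1)}$. Indeed, the state space $\cS(\cV_{n_1} \otimes_{\min} \cV_{n_2})$ is $w^*$-compact, and since $\cG_{n_1,n_2}$ spans $\cV_{n_1} \otimes \cV_{n_2}$, the restriction map $s \mapsto (s(x))_{x \in \cG_{n_1,n_2}}$ is $w^*$-continuous with compact image.

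Next I would establish $UC_{qa}(n_1,n_2) \subseteq R_{n_1,n_2}$. Any element of $UC_q(n_1,n_2)$ arises from a finite-dimensional state $s$ on $\cU_{nc}(n_1) \otimes_{\min} \cU_{nc}(n_2)$. Since the minimal tensor product is injective, the inclusion $\cV_{n_1} \otimes_{\min} \cV_{n_2} \hookrightarrow \cU_{nc}(n_1) \otimes_{\min} \cU_{nc}(n_2)$ is a complete order embedding, so $s$ restricts to a state on $\cV_{n_1} \otimes_{\min} \cV_{n_2}$ that yields the same tuple. Hence $UC_q(n_1,n_2) \subseteq R_{n_1,n_2}$, and the reverse inclusion $UC_{qa}(n_1,n_2) \subseteq R_{n_1,n_2}$ follows by closedness of $R_{n_1,n_2}$.

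For the reverse inclusion $R_{n_1,n_2} \subseteq UC_{qa}(n_1,n_2)$, let $s \in \cS(\cV_{n_1} \otimes_{\min} \cV_{n_2})$. By the Hahn-Banach theorem for operator systems (unital positive functionals extend to unital positive functionals), $s$ extends to a state $\tilde{s}$ on $\cU_{nc}(n_1) \otimes_{\min} \cU_{nc}(n_2)$. By Theorem \ref{uncrfd}, each $\cU_{nc}(n_i)$ is RFD, so by Remark \ref{minrfd} the $C^{\ast}$-algebra $\cU_{nc}(n_1) \otimes_{\min} \cU_{nc}(n_2)$ is RFD as well. The Exel-Loring characterization (Theorem \ref{EL}) then yields a net $(\tilde{s}_\lambda) \subseteq \text{Fin}(\cU_{nc}(n_1) \otimes_{\min} \cU_{nc}(n_2))$ with $\tilde{s}_\lambda \to \tilde{s}$ in the $w^*$-topology. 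Evaluating on the finite set $\cG_{n_1,n_2}$ shows that the tuple $(s(x))_{x \in \cG_{n_1,n_2}}$ is the limit of tuples in $UC_q(n_1,n_2)$, giving membership in $UC_{qa}(n_1,n_2)$.

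The proof is largely a direct assembly of previously established facts; no step should present serious difficulty. The only piece that merits care is the Hahn-Banach extension, which must be invoked in its operator-system form (unit-preserving, positive) rather than the naive Banach-space version, so that $\tilde{s}$ is genuinely a state. Everything else then follows from injectivity of $\otimes_{\min}$ and from the RFD property established in Theorem \ref{uncrfd} and Remark \ref{minrfd}.
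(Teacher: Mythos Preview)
Your proposal is correct and follows essentially the same approach as the paper: both directions use injectivity of $\otimes_{\min}$ together with the $w^*$-closedness of the state space for one inclusion, and Hahn--Banach extension plus the RFD property of $\cU_{nc}(n_1) \otimes_{\min} \cU_{nc}(n_2)$ (via Theorem~\ref{EL}) for the other. Your write-up is slightly more explicit about why $R_{n_1,n_2}$ is closed, but the argument is the same.
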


\begin{proof}
Note that $\cV_{n_1} \otimes_{\min} \cV_{n_2}$ is completely order isomorphic to its inclusion in $\cU_{nc}(n_1) \otimes_{\min} \cU_{nc}(n_2)$.  Since $\cS(\cV_{n_1} \otimes_{\min} \cV_{n_2})$ is $w^*$-closed, the proof of Proposition \ref{uccontainments} shows that $$UC_{qa}(n_1,n_2) \subseteq \{(s(x))_{x \in \cG_{n_1,n_2}}: s \in \cS(\cV_{n_1} \otimes_{\min} \cV_{n_2})\}.$$
Conversely, let $s \in \cS(\cV_{n_1} \otimes_{\min} \cV_{n_2})$.  By the Hahn-Banach theorem we may extend $s$ to a state on $\cU_{nc}(n_1) \otimes_{\min} \cU_{nc}(n_2)$.  Since $\cU_{nc}(n_1) \otimes_{\min} \cU_{nc}(n_2)$ is RFD, by Theorem \ref{EL}, $s$ can be approximated pointwise by elements of $\text{Fin}(\cU_{nc}(n_1) \otimes_{\min} \cU_{nc}(n_2))$.  Restricting to the set $\cG_{n_1,n_2}$ yields a net of states whose images on the set $\cG_{n_1,n_2}$ are elements of $UC_q(n_1,n_2)$.  Since this net of states converges pointwise to $s$, we see that $(s(x))_{x \in \cG_{n_1,n_2}} \in UC_{qa}(n_1,n_2)$, as required.
\end{proof}

Using Lemma \ref{ucqa} allows us to formulate the problem of deciding whether $UC_{qa}(n_1,n_2)=UC_{qc}(n_1,n_2)$ in terms of $\cV_{n_1} \otimes_{\min} \cV_{n_2}$ and $\cV_{n_1} \otimes_c \cV_{n_2}$.

\begin{lem}
\label{utsirelsonorderiso}
Let $n_1,n_2 \geq 2$.  Then $UC_{qa}(n_1,n_2)=UC_{qc}(n_1,n_2)$ if and only if $\id:\cV_{n_1} \otimes_{\min} \cV_{n_2} \to \cV_{n_1} \otimes_c \cV_{n_2}$ is an order isomorphism.
\end{lem}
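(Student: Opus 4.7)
The plan is to translate the equality of the two correlation sets into an equality of state spaces, and then use the fact that in an Archimedean operator system the positive cone at level one is determined by the states.

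First, I would rewrite both sides as state spaces. By Lemma \ref{ucqa} we have $UC_{qa}(n_1,n_2) = \{(s(x))_{x \in \cG_{n_1,n_2}} : s \in \cS(\cV_{n_1} \otimes_{\min} \cV_{n_2})\}$, and by the discussion preceding Proposition \ref{uccontainments} together with Proposition \ref{vncommutingorderembedding} we have $UC_{qc}(n_1,n_2) = \{(s(x))_{x \in \cG_{n_1,n_2}} : s \in \cS(\cV_{n_1} \otimes_c \cV_{n_2})\}$. Since $\cG_{n_1,n_2}$ linearly spans $\cV_{n_1} \otimes \cV_{n_2}$, the assignment $s \mapsto (s(x))_{x \in \cG_{n_1,n_2}}$ is injective on linear functionals, so it gives a bijection between each state space and the corresponding unitary correlation set.

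Next I would observe that $\min \leq c$ forces $\cS(\cV_{n_1} \otimes_{\min} \cV_{n_2}) \subseteq \cS(\cV_{n_1} \otimes_c \cV_{n_2})$: any linear functional that is positive on the larger min-cone is automatically positive on the smaller $c$-cone. Under the bijection above this recovers the inclusion $UC_{qa}(n_1,n_2) \subseteq UC_{qc}(n_1,n_2)$ from Proposition \ref{uccontainments}, and equality of the correlation sets is equivalent to the equality $\cS(\cV_{n_1} \otimes_{\min} \cV_{n_2}) = \cS(\cV_{n_1} \otimes_c \cV_{n_2})$.

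Finally I would invoke the standard fact that in an operator system with Archimedean order unit, a self-adjoint element is positive if and only if every state assigns it a nonnegative value (a direct Hahn-Banach separation argument). Applied to the common underlying space $\cV_{n_1} \otimes \cV_{n_2}$ equipped with either the min or the $c$ matrix order and order unit $1 \otimes 1$, this shows that the level-one positive cones coincide precisely when the two state spaces coincide. Since $\id: \cV_{n_1} \otimes_c \cV_{n_2} \to \cV_{n_1} \otimes_{\min} \cV_{n_2}$ is automatically unital and $1$-positive from $\min \leq c$, the reverse map is $1$-positive exactly when the level-one cones agree. Chaining the three equivalences yields the claim.

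There is no real obstacle here beyond bookkeeping; the only subtle point is being careful to separate level-one positivity (which is all that an order isomorphism requires) from complete positivity, and to check that the spanning property of $\cG_{n_1,n_2}$ makes the state-to-tuple map injective so that the state-space equality is genuinely equivalent to the correlation-set equality.
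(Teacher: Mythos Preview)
Your proposal is correct and follows essentially the same route as the paper's proof: both arguments identify the correlation sets with the state spaces of $\cV_{n_1}\otimes_{\min}\cV_{n_2}$ and $\cV_{n_1}\otimes_c\cV_{n_2}$ via the spanning set $\cG_{n_1,n_2}$, and then use that level-one positivity in an operator system is determined by the states. You are simply more explicit than the paper about the injectivity of the state-to-tuple map and the direction of the inclusion coming from $\min\leq c$.
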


\begin{proof}
If $UC_{qa}(n_1,n_2)=UC_{qc}(n_1,n_2)$, then by linearity the states on $\cV_{n_1} \otimes_{\min} \cV_{n_2}$ and $\cV_{n_1} \otimes_c \cV_{n_2}$ are the same.  An element in an operator system is positive if and only if its image under each state is positive (see, for example, \cite[Chapter 13]{paulsen02}), so we conclude that $C_1^{\min}(\cV_{n_1},\cV_{n_2})=C_1^{\text{comm}}(\cV_{n_1},\cV_{n_2})$.  Therefore, $\cV_{n_1} \otimes_{\min} \cV_{n_2}$ and $\cV_{n_1} \otimes_c \cV_{n_2}$ must be order isomorphic.  Conversely, if $\id:\cV_{n_1} \otimes_{\min} \cV_{n_2} \to \cV_{n_1} \otimes_c \cV_{n_2}$ is an order isomorphism, then the positive elements are the same in the two operator systems, so the state spaces are identical.  Restricting to the set $\cG_{n_1,n_2}$, we obtain the equality $UC_{qa}(n_1,n_2)=UC_{qc}(n_1,n_2)$.
\end{proof}

We are now ready for the main result of this section.

\begin{mythe}
\label{unitarycorrelations}
The following are equivalent.
\begin{enumerate}
\item
Connes' embedding problem has a positive answer.
\item
$UC_{qa}(n_1,n_2)=UC_{qc}(n_1,n_2)$ for all $n_1,n_2 \geq 2$.
\item
$UC_{qa}(n,n)=UC_{qc}(n,n)$ for all $n \geq 2$.
\item
$C_{qa}(n,m)=C_{qc}(n,m)$ for all $n,m \geq 2$.
\item
$C_{qa}(n,2)=C_{qc}(n,2)$ for all $n \geq 2$.
\end{enumerate}
\end{mythe}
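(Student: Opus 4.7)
The strategy is to close a cycle of implications; the equivalences $(1) \Leftrightarrow (4) \Leftrightarrow (5)$ are already known from the work of \cite{fritz}, \cite{junge}, and \cite{ozawa}, so the essential new content is inserting the unitary correlation statements $(2)$ and $(3)$ into this picture. Since $(2) \Rightarrow (3)$ and $(4) \Rightarrow (5)$ are immediate by restriction, the plan reduces to verifying $(1) \Rightarrow (2)$ and $(3) \Rightarrow (5)$; the cycle then closes via Ozawa's theorem \cite{ozawa}, which supplies $(5) \Rightarrow (1)$.

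For $(1) \Rightarrow (2)$ I would route the argument through the DCEP. Under Connes' embedding, $C^*(F_\infty)$ has the WEP by \cite{kirchberg93}, while $\cV_n$ has the OSLLP by Lemma \ref{bofhpropertyv}. Combining Theorem \ref{osllp} (OSLLP $\Leftrightarrow$ $(\min,\er)$-nuclear) with Theorem \ref{wepnuclearity} (WEP $\Leftrightarrow$ $(\el,\max)$-nuclear) and the symmetry $\cS \otimes_{\er} \cT \simeq \cT \otimes_{\el} \cS$ yields
\[
\cV_n \otimes_{\min} C^*(F_\infty) = \cV_n \otimes_{\er} C^*(F_\infty) = C^*(F_\infty) \otimes_{\el} \cV_n = C^*(F_\infty) \otimes_{\max} \cV_n.
\]
By Theorem \ref{dcep}(3) this forces $\cV_n$ to have the DCEP, and then Theorem \ref{dcepv2} gives $\cV_n \otimes_{\min} \cV_m = \cV_n \otimes_c \cV_m$ for all $n,m \geq 2$. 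Lemma \ref{utsirelsonorderiso} translates this tensor product equality into $(2)$.

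For $(3) \Rightarrow (5)$, Lemma \ref{utsirelsonorderiso} converts the hypothesis into an order isomorphism $\id : \cV_n \otimes_{\min} \cV_n \to \cV_n \otimes_c \cV_n$ for each $n \geq 2$. Proposition \ref{ncboxfactorsthroughvn} identifies $NC(n)$ as a retract of $\cV_n$, and the order-isomorphism form of Lemma \ref{retract} transfers this down to an order isomorphism $\id : NC(n) \otimes_{\min} NC(n) \to NC(n) \otimes_c NC(n)$. Proposition \ref{qaqcorderiso} then delivers $C_{qa}(n,2) = C_{qc}(n,2)$ for every $n \geq 2$, which is $(5)$.

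The main subtlety I anticipate is the gap between order isomorphism and complete order isomorphism: equality of unitary correlation sets only produces an order isomorphism at the state-space level, yet the standard Kirchberg-type tensor product arguments typically require the complete version. The route above sidesteps this by funneling the weaker hypothesis through the retract $NC(n) \hookleftarrow \cV_n$ of Proposition \ref{ncboxfactorsthroughvn}, where Ozawa's theorem is already formulated at the order-isomorphism level. The remaining implication $(1) \Rightarrow (4)$, which closes the cycle, is standard from \cite{fritz}, \cite{junge}, and \cite{ozawa}.
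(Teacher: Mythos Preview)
Your proposal is correct and follows essentially the same route as the paper: the implication $(3) \Rightarrow (5)$ is identical (Lemma \ref{utsirelsonorderiso}, the retract of Proposition \ref{ncboxfactorsthroughvn}, Lemma \ref{retract} at the order-isomorphism level, then Proposition \ref{qaqcorderiso}), and the closing of the cycle via Ozawa's $(5) \Rightarrow (1)$ matches exactly. The only cosmetic difference is in $(1) \Rightarrow (2)$: the paper invokes \cite[Theorem~9.1]{quotients} (Kirchberg's conjecture $\Leftrightarrow$ every $(\min,\er)$-nuclear operator system is $(\el,c)$-nuclear) as a black box, whereas you unpack that implication by hand---showing $\cV_n \otimes_{\min} C^*(F_\infty) = \cV_n \otimes_{\max} C^*(F_\infty)$ and then applying Theorems \ref{dcep} and \ref{dcepv2}---which is precisely the content of that cited result specialized to $\cV_n$.
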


\begin{proof}
Suppose (1) holds.  By \cite[Theorem 9.1]{quotients}, Kirchberg's conjecture is equivalent to every $(\min,\er)$-nuclear operator system being $(\el,c)$-nuclear.  As each $\cV_n$ is $(\min,\er)$-nuclear, it follows that $\cV_{n_1} \otimes_{\min} \cV_{n_2}=\cV_{n_1} \otimes_c \cV_{n_2}$ for all $n_1,n_2 \geq 2$.  Hence, these operator systems are order isomorphic, so that $UC_{qa}(n_1,n_2)=UC_{qc}(n_1,n_2)$ for all $n_1,n_2 \geq 2$.
Clearly (2) implies (3) and (4) implies (5).  The implication $(5) \implies (1)$ was obtained by Ozawa \cite[Theorem 36]{ozawa}.  Hence, we need only show that (3) implies (5).  By Lemma \ref{utsirelsonorderiso}, condition (3) implies that $\cV_n \otimes_{\min} \cV_n$ and $\cV_n \otimes_c \cV_n$ are order isomorphic.  By Proposition \ref{ncboxfactorsthroughvn}, $NC(n)$ is a retract of $\cV_n$.  Using Lemma \ref{retract}, the identity map $\id:NC(n) \otimes_{\min} NC(n) \to NC(n) \otimes_c NC(n)$ is $1$-positive.  Since $\min \leq c$, we see that $NC(n) \otimes_{\min} NC(n)$ and $NC(n) \otimes_c NC(n)$ are order isomorphic for all $n \geq 2$.  Applying Proposition \ref{qaqcorderiso}, we obtain the equality $C_{qa}(n,2)=C_{qc}(n,2)$, as desired.
\end{proof}

Some striking differences arise between the quantum correlation sets and the unitary correlation sets.  It is known that $C_{qa}(2,2)=C_{qc}(2,2)$ (see, for example, \cite{ozawa}).  The question of whether $C_q(n,m)=C_{qc}(n,m)$ for all $n,m \geq 2$ was open until Slofstra \cite{slofstra} recently proved that there are large $n,m$ for which $C_{qs}(n,m) \neq C_{qc}(n,m)$. Similarly, it was unknown whether $C_{qs}(n,m)$ is closed for all $n,m \geq 2$, until Slofstra recently provided a counterexample \cite{slofstra17} for large $n,m$.

In contrast, it is now known that $UC_{qs}(2,2) \subsetneq UC_{qc}(2,2)$.  Indeed, in \cite{CLP} it is shown that there is a state $s:\cU_{nc}(2) \otimes_{\min} \cU_{nc}(2) \to \bC$ that cannot arise from a finite-dimensional representation of $\cU_{nc}(2) \otimes_{\min} \cU_{nc}(2)$.  In fact, it is shown that this state cannot arise from a spatial representation of $\cU_{nc}(2) \otimes_{\min} \cU_{nc}(2)$ on a tensor product of Hilbert spaces, even if the Hilbert spaces are infinite-dimensional.  Since $C_n^{\text{comm}}(\cU_{nc}(2),\cU_{nc}(2)) \subseteq C_n^{\min}(\cU_{nc}(2),\cU_{nc}(2))$, $s$ is also a state on $\cU_{nc}(2) \otimes_{\max} \cU_{nc}(2)$.  Hence we obtain an element of $UC_{qc}(2,2)$ that cannot be in $UC_{qs}(2,2)$.  This shows that $UC_{qs}(2,2) \subsetneq UC_{qc}(2,2)$.  Moreover, it is shown in \cite{CLP} that $s$ can be approximated in the $w^*$-topology by states on $\cU_{nc}(2) \otimes_{\min} \cU_{nc}(2)$ corresponding to elements of $UC_q(2,2)$.  Therefore, $UC_q(2,2)$ and $UC_{qs}(2,2)$ are not even closed.  The methods in \cite{CLP} can be adapted in a natural way to show that $UC_{qs}(n,n) \subsetneq UC_{qc}(n,n)$ for all $n \geq 2$, and that $UC_q(n,n)$ and $UC_{qs}(n,n)$ are not closed.

\end{document}